\numberwithin{equation}{section}
\theoremstyle{plain}
\newtheorem{theorem}{Theorem}[subsection]
\newtheorem{lemma}[theorem]{Lemma}
\newtheorem{proposition}[theorem]{Proposition}
\newtheorem{corollary}[theorem]{Corollary}
\theoremstyle{definition}
\newtheorem{definition}[theorem]{Definition}
\newtheorem{example}[theorem]{Example}
\newtheorem{remark}[theorem]{Remark}
\newcommand{\st} [1]     {\scriptscriptstyle{{#1}}}
\newcommand{\id}         {{\mathrm {Id}}}
\newcommand{\inv}        {\mathrm{inv}}
\def\transverse{\pitchfork}
\def\eps{\epsilon}
\def\V{\mathcal V}
\def\R{\mathbb R}
\def\C{C^\infty}
\def\id{{\rm id}}
\def\then{\Rightarrow}
\def\xto{\xrightarrow}
\def\xfrom{\xleftarrow}
\def\action{\curvearrowright}
\def\toto{\rightrightarrows}
\def\<{\langle}
\def\>{\rangle}
\newcommand{\SP} [1]     {{\left\langle {{#1}} \right\rangle}}
\newcommand{\Rm}         {(\mathbb{R},\cdot)}
\newcommand{\sour}        {\mathsf{s}}
\newcommand{\tar}         {{\mathsf{t}}}
\newcommand{\Lie}        {{\rm Lie}}
\newcommand{\dd}     {\mathrm{d}}
\begin{document}

\title[]
{Vector bundles over Lie groupoids and algebroids}

\author[]
{Henrique Bursztyn \and Alejandro Cabrera \and Matias del Hoyo}

\address
{IMPA, Estrada Dona Castorina 110, Rio de Janeiro, 22460-320, Brazil
} \email{henrique@impa.br, mdelhoyo@impa.br}

\address{Departamento de Matem\'atica Aplicada - IM, Universidade
Federal do Rio de Janeiro, CEP 21941-909, Rio de Janeiro, Brazil}
\email{acabrera@labma.ufrj.br}

\address{}
\email{}

\date{}
%\subjclass[2000]{Primary 20G15; Secondary 14M17, 53D17}

%\centerline{Draft / \today}

\begin{abstract}
We study VB-groupoids and VB-algebroids, which are vector bundles in
the realm of Lie groupoids and Lie algebroids. Through a suitable
reformulation of their definitions, we elucidate the Lie theory
relating these objects, i.e., their relation via differentiation and
integration. We also show how to extend our techniques to describe
the more general Lie theory underlying double Lie algebroids and
LA-groupoids.
\end{abstract}

\maketitle

\vspace{-1cm}

\tableofcontents

\vspace{-1cm}

%%%%%%%%%%%%%%%%%%%%%%%%%%%%%%%%%%%%%%%%%%%%%%%%%%%%%%

\section{Introduction}\label{sec:intro}

% Lie groupoids, algebroids, applications...

Lie groupoids arise in various areas of geometry and topology, such
as group actions, foliations and Poisson geometry
\cite{CW,Mac-book,mm}, often serving as models for singular spaces
(see e.g. \cite{survey} and references therein). They provide a
unifying viewpoint to seemingly unrelated questions that has led to
important extensions of classical geo\-metrical results. Lie
algebroids are their infinitesimal counterparts, and both are
related by a rich Lie theory \cite{CF}, with many applications
beyond the classical theory of Lie algebras and Lie groups.

% VB-grps/algds

The main objects of study in this paper are the so-called {\em
VB-groupoids} and {\em VB-algebroids}
\cite{GM,mac-double1,dla,Prad2}, which can be thought of as
(categorified) vector bundles in the realm of Lie groupoids and Lie
algebroids. Paradigmatic examples include the tangent and cotangent
bundles of Lie groupoids and Lie algebroids. These objects have been
the subject of extensive study in the last years
\cite{mac-double1,mac-double2,mac-crelle}, partly motivated by their
deep ties with Poisson geometry \cite{mk2,Mac-Xu,Mac-Xu2}.

% VB and reps

From another perspective, VB-groupoids and VB-algebroids are
intimately related to the study of representations. Indeed,
representations of Lie groupoids on vector bundles provide a wealth
of examples of VB-groupoids through the construction of semi-direct
products, and analogously for Lie algebroids. More generally, it is
shown in \cite{GM,GM2} that VB-groupoids and VB-algebroids provide
an intrinsic approach to representations up to homotopy
\cite{AC1,AC2}, a ``higher'' notion of representation needed to make
sense e.g. of the adjoint representation of a Lie groupoid or
algebroid.

% goal, description of paper

Our main goal in this paper is to describe the Lie theory relating
VB-algebroids and VB-groupoids, i.e., to elucidate how they are
related via differentiation and integration. A key step in our work
relies on finding simpler formulations of VB-groupoids and
VB-algebroids, explained in Theorems \ref{thm:VB-gpd} and
\ref{thm:VB-alg}. We show that VB-groupoids (resp. VB-algebroids)
can be described as Lie groupoids (resp. Lie algebroids) equipped
with an additional action of the monoid $\Rm$, with natural
compatibility conditions, in the spirit of the characterization of
vector bundles in \cite{GR}. From this viewpoint, we can study the
Lie functor relating VB-groupoids and VB-algebroids by
differentiating and integrating these actions. We prove in
Theorem~\ref{thm:integration} that, if the total algebroid of a
VB-algebroid is integrable, then its vector-bundle structure can be
lifted to the source-simply-connected Lie groupoid integrating it,
which then becomes a VB-groupoid. This result finds applications
e.g. in the study of Dirac structures on Lie groupoids \cite{ortiz}
and also provides information about integration of representations
up to homotopy \cite{AS2} (see also \cite{BrCO}).

% description, cont.

Our techniques to handle VB-algebroids and VB-groupoids allow us to
go further and explain the Lie theory relating more general objects,
known as double Lie algebroids and LA-groupoids
\cite{mac-double1,mac-double2,Mac04,mac-crelle}. One can think of
them as Lie algebroids defined over Lie algebroids and Lie
groupoids, or as generalizations of VB-algebroids and VB-groupoids
in which the vector-bundle structures are enhanced to be Lie
algebroids. In this more general context, we prove in
Theorem~\ref{thm:LADLA} that if the top Lie algebroid in a double
Lie algebroid is integrable, then its source-simply-connected
integration naturally becomes an LA-groupoid; this provides the
reverse procedure to the differentiation in \cite{mac-double2}. Our
approach to establish this result heavily relies on the well-known
duality between Lie algebroids and linear Poisson structures. Rather
than treating double Lie algebroids and LA-groupoids directly, we
focus on their dual objects. Building on our previous results for
VB-groupoids and VB-algebroids, we describe these duals as Lie
bialgebroids and Poisson groupoids carrying an extra compatible
$\Rm$-action. The result then follows from the differentiation and
integration properties of these actions, along with a natural
integration result for morphisms of Lie bialgebroids (see
Prop.~\ref{bialgmap}).

Throughout the paper, several arguments rely on the construction of
fibred products in the categories of Lie algebroids and Lie
groupoids; we collect the necessary results in the appendix,
organizing and extending previous discussions about fibred products
in the literature.

\medskip

\noindent{\bf Organization.}
%The layout of the paper is as follows.
After preliminaries in
Section \ref{sec:prelim}, which include the description of (double)
vector bundles and linear Poisson structures in terms of
$\Rm$-actions, we present new characterizations of VB-groupoids and
VB-algebroids in Section \ref{sec:regactions}. Their Lie theory is
explained in Section \ref{sec:Lie}. In Section \ref{sec:DLA}, we
consider double Lie algebroids and LA-groupoids, describe their dual
objects, and extend the Lie theory of Section \ref{sec:Lie} to this
context.

\medskip

\noindent{\bf Acknowledgments.} We are grateful to several
institutes for hosting us during different stages of this project,
especially IST (Lisbon) and U. Utrecht. We thank several people for
useful discussions and helpful advice, including M. Crainic, T.
Drummond, R. Fernandes, M. Jotz, D. Li-Bland, K. Mackenzie, R.
Mehta, E. Meinreken, and C. Ortiz. We thank CNPq and FAPERJ for
financial support.

%%%%%%%%%%%%%%%%%%%%%%%%%%%%%%%%%%%%%%%%%%%%%%%%%%%%%%%%%%%%%%%%%%%%%%%%%%

\medskip

\noindent{\bf Notation and conventions.}
A Lie groupoid with arrows manifold $G$ and units $M$ will be
denoted by $ G\toto M,$ or simply by $G$. We write $\sour_G$ and
$\tar_G$ for the source and target maps. The set
$G^{(2)}=\{(g,h):\sour_G(g)=\tar_G(h)\}\subset G\times G$
%\ale{isnt $t(g)=s(h)$ so that $A=kerTs$ and bracket for right invar vector fields?}
is the domain of the multiplication  $m_G:G^{(2)}\to G$,
$m(g,h)=gh$. The unit map $u_G: M \to G$ is often used to identify
$M$ with its image in $G$, and we write $\inv_G:G\to G$, $\inv_G(g)
= g^{-1}$, for the inversion. We often suppress the subscript $G$
and simply write $\sour,\tar, m, u, \inv$. We denote Lie-groupoid
maps by $(\Phi,\phi): (G_1\toto M_1)\to (G_2\toto M_2)$, or simply
by $\Phi: G_1\to G_2$, having in mind that $\phi = \Phi|_M$.

Given a Lie groupoid $G\toto M$, its Lie algebroid
$\mathrm{Lie}(G)=A_G$ has underlying vector bundle
$A_G=\ker(\dd\sour)|_M \to M$, with anchor map
$\dd\tar|_{A_G}:A_G\to M$ and Lie bracket on sections of $A_G$
induced by right-invariant vector fields. A general Lie algebroid
with underlying vector bundle $A\to M$ will be usually denoted by $
A \then M.$ We use the notation $\rho_{\st{A}}$ for the anchor map
and $[\cdot,\cdot]_{\st{A}}$ for the bracket, or simply $\rho$ and
$[\cdot,\cdot]$, if there is no risk of confusion.

%%%%%%%%%%%%%%%%%%%%%%%%%%%%%%%%%%%%%%%%%%%%%%%%%%%%%%%%%%%%%%%%%%%%%%%%%%%%

\section{Preliminaries}\label{sec:prelim}

% overview

We start by discussing a characterization of vector bundles via
actions of the multiplicative monoid $\Rm$ as in \cite{GR}, where
details can be found. We will also consider double vector bundles
%(see e.g. \cite[Chap.~9]{Mac-book})
and linear Poisson structures from this viewpoint.

\subsection{A characterization of vector bundles}

% $\Rm$-actions

Let $D$ be a smooth manifold, and denote by $\Rm$ the multiplicative
monoid of real numbers. An {\em action} $h:\Rm\action D$ of $\Rm$ on
$D$ is a smooth map
$$
h:\R\times D \to D, \qquad h(\lambda,x)=h_\lambda(x),
$$
satisfying the usual action axioms: $h_1 = \id_D$ and $h_\lambda
h_{\lambda'}=h_{\lambda\lambda'}$ for all $\lambda,\lambda'\in\R$.

% the fixed points

Assume that $D$ is connected. Since the map $h_0$ is a projection,
i.e. $h_0\circ h_0=h_0$, it follows that $h_0(D)\subset D$ is an
embedded submanifold with $T_{h_0(x)} h_0(D)=\dd_xh_0(T_xD)$ for all
$x\in D$, see e.g. \cite[Thm 1.13]{nodg}. Using that $h$ is an
action one may check that the rank of the map $h_0: D\to h_0(D)$ is
constant, and hence it is a surjective submersion. When $D$ is not
connected, the rank of $h_0$ is only locally constant, i.e., it is
constant on each connected component, but may vary from one
component to another.
When considering $\Rm$-actions on disconnected manifolds, {\em we
will always assume that $h_0$ has constant rank}. This guarantees
that $h_0(D)$ is an embedded submanifold of $D$.

% example of vector bundles

The key example of an action of $\Rm$ is the fibrewise scalar
multiplication (homotheties) on a vector bundle $E\to M$, in which
case $h_0(E)=M$. This action satisfies an additional property: if
$x\in E$ is a non-zero vector then the curve $\lambda\mapsto
h_\lambda(x)$ has non-zero velocity at the origin. This motivates
the following definition.

% regular actions

\begin{definition}
We call an action $h: \Rm \action D$ {\em regular} if the following
equation holds at all points in $x\in D$:
\begin{equation}\label{eq:reg}
\frac{d}{d\lambda}\Big|_{\lambda=0} h_\lambda(x)=0 \quad\then\quad
x=h_0(x).
\end{equation}
\end{definition}

It turns out that an action is regular if and only if it can be
realized as the homotheties of a vector bundle. Let us recall a
construction from \cite{GR} that explains this fact and plays a key
role in this paper.

% vertical bundle

Given an action $h: \Rm \action D$, there is always a vector bundle
over $h_0(D)$ canonically associated with it, the so-called {\em
vertical bundle}, defined by
\begin{equation}\label{eq:VE}
V_h D=\ker(\dd h_0)|_{h_0(D)}.
\end{equation}
Note that its underlying $\Rm$-action is the restriction of the
homotheties on $TD\to D$. This passage from $\Rm$-actions to vector
bundles is functorial, i.e., an $\Rm$-equivariant map $D_1\to D_2$
yields a canonical vector bundle map $V_hD_1\to V_hD_2$, and this
assignment respects identities and compositions.

% vertical lift

The {\em vertical lift} $\V_h: D\to V_hD$ is the smooth map that
associates to each point $x\in D$ the velocity at time 0 of the
curve $\lambda\mapsto h_\lambda(x)$:
\begin{equation}\label{eq:vlift}
\V_h(x)=\frac{d}{d\lambda}\Big|_{\lambda=0}h_\lambda(x),
\end{equation}
so the action is regular if and only if the zeroes of $\V_h$ are
exactly its fixed points. One may readily verify (through the chain
rule) that the vertical lift is $\Rm$-equivariant. When $h$ is
defined by homotheties of a vector bundle, the vertical lift is the
standard identification with its associated vertical bundle.

% the theorem

The regularity of an action is clearly a necessary condition for the
vertical lift to be a diffeomorphism. The less evident fact is that
it is also sufficient:
\begin{theorem}[\cite{GR}]
\label{thm:vertical.lift} An action $h:\Rm\action D$ is regular if
and only if the vertical lift $\V_h$ is a diffeomorphism onto
$V_hD$. In this case $D\to h_0(D)$ inherits a natural vector-bundle
structure for which $\V_h$ is a vector-bundle isomorphism.
\end{theorem}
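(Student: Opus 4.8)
The plan is to treat the two implications separately, the converse being routine and the forward one carrying all the work. Set $M=h_0(D)$. If $\V_h$ is a diffeomorphism onto $V_hD$, then it is in particular injective; since every fixed point $x=h_0(x)$ satisfies $h_\lambda(x)=h_\lambda h_0(x)=h_0(x)=x$, its orbit curve is constant and $\V_h(x)=0$ lies in the zero section, so injectivity forces $\V_h(x)=0\then x=h_0(x)$, i.e. regularity. For the forward implication I would show that, assuming regularity, $\V_h$ is a bijective local diffeomorphism covering $\id_M$ on the base $M$; the vector-bundle structure of $V_hD=\ker(\dd h_0)|_M$ (a subbundle of $TD|_M$ of constant rank, by the constant-rank assumption on $h_0$) then transports along $\V_h$ to $D\to M$, and $\Rm$-equivariance of $\V_h$ identifies $h$ with fibrewise scalar multiplication.

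First I would analyse $\V_h$ near the fixed-point set $M$. For $m\in M$ the differentials $\dd h_\lambda|_m$ form a smooth linear action of the monoid $\Rm$ on $T_mD$. Because this action extends smoothly across $\lambda=0$, the standard structure of such actions applies: writing $\dd h_\lambda|_m=\lambda^{N}$ for $\lambda>0$ with $N=\frac{d}{d\lambda}\big|_{\lambda=1}\dd h_\lambda|_m$, smoothness at the origin forces $N$ to be diagonalizable with eigenvalues in $\mathbb{Z}_{\geq 0}$ (any Jordan block or non-integer eigenvalue would produce $\lambda^\nu(\ln\lambda)^j$ terms, which are not smooth at $0$). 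Thus $T_mD$ splits into weight spaces, the weight-$0$ space being $\image(\dd h_0|_m)=T_mM$ and the sum of the positive weights being $\ker(\dd h_0|_m)=(V_hD)_m$. A direct computation, differentiating $\V_h$ along a curve tangent to a weight-$k$ vector $w$, gives $\dd\V_h|_m(w)=\big(\frac{d}{d\lambda}\big|_0\lambda^{k}\big)w$, which equals $w$ for $k=1$ and vanishes for $k\geq 2$. Regularity is exactly what excludes weights $\geq 2$: a nonzero weight-$k$ direction with $k\geq 2$ would furnish points off $M$ whose orbit curve has vanishing velocity at $\lambda=0$. Hence only weights $0$ and $1$ occur, $\dd\V_h|_m$ is the identity on $T_mM$ and on $(V_hD)_m$, and $\V_h$ is a local diffeomorphism at every point of $M$.

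Next I would propagate this to all of $D$ using equivariance and the contraction $h_\mu(x)\to h_0(x)$ as $\mu\to0$. The open set on which $\V_h$ is a local diffeomorphism is invariant under $h_\mu$ for $\mu>0$ (since $\V_h\circ h_\mu=\mu\,\V_h$ and both $h_\mu$ and scalar multiplication by $\mu$ are diffeomorphisms there) and contains $M$; as $h_\mu(x)$ enters any neighbourhood of $h_0(x)\in M$ for small $\mu>0$, applying $h_{1/\mu}$ shows $x$ itself lies in this set. Therefore $\V_h$ is a local diffeomorphism everywhere. It remains to upgrade this to a global bijection, which I would do one fibre at a time: on $F=h_0^{-1}(m)$ the restriction $\V_h|_F\colon F\to(V_hD)_m$ is a local diffeomorphism intertwining the contracting dilations. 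Surjectivity follows because its image is open, contains $0=\V_h(m)$, and is invariant under all positive dilations, hence equals the whole fibre. Injectivity follows by scaling: if $\V_h(x)=\V_h(y)$ then $\V_h(h_\mu x)=\V_h(h_\mu y)$, and for small $\mu>0$ the points $h_\mu x,h_\mu y$ lie in a neighbourhood of $m$ on which $\V_h$ is injective, so $h_\mu x=h_\mu y$ and thus $x=y$. This makes $\V_h$ a bijective local diffeomorphism, i.e. a diffeomorphism, and the vector-bundle structure transports as described.

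The main obstacle I anticipate is the linear normal form at the fixed points: the claim that a smooth action of the \emph{monoid} $\Rm$ on $T_mD$ is diagonalizable with non-negative integer weights. This is precisely where smoothness at $\lambda=0$ (rather than merely on $\R^\times$) is indispensable, as it is what forbids logarithmic and fractional-power terms; once this is in hand, regularity enters only through the elementary exclusion of weights $\geq 2$, and the remaining steps are formal consequences of equivariance and the contracting nature of the action.
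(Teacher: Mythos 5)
The paper itself does not prove this theorem --- it is imported from \cite{GR} --- so your proposal has to be judged on its own merits and against the argument in that reference. Most of your steps are correct and well organized: the easy direction; the decomposition $T_mD=\bigoplus_{k\geq 0}E_k$ of the tangent action at a fixed point into non-negative integer weight spaces; the computation showing that $\dd\V_h|_m$ is invertible precisely when no weights $k\geq 2$ occur; the propagation of the local-diffeomorphism property from $M$ to all of $D$ via $\V_h\circ h_\mu=\mu\,\V_h$; and the fibrewise surjectivity and injectivity arguments by scaling. (One slip: the formula $\dd\V_h|_m(w)=\bigl(\tfrac{d}{d\lambda}\bigr|_0\lambda^k\bigr)w$ is literally false for $k=0$, where it gives $0$; in the splitting $T_{0_m}(V_hD)\cong T_mM\oplus(V_hD)_m$ the correct statement, which is the one you actually use afterwards, is that $\dd\V_h|_m$ carries $E_0=T_mM$ identically onto the first summand and acts by $\delta_{k,1}$ on $E_k$ into the second.)

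The genuine gap is the step you label as elementary: ``regularity is exactly what excludes weights $\geq 2$.'' Your justification --- that a nonzero weight-$k$ direction with $k\geq 2$ ``would furnish points off $M$ whose orbit curve has vanishing velocity at $\lambda=0$'' --- is a restatement of what must be proved, not a proof. Regularity is a condition on \emph{points} of $D$, whereas a weight vector is infinitesimal data at a fixed point; nothing in your argument produces, from $0\neq w\in E_k(m)$ with $k\geq 2$, an actual point $x\neq h_0(x)$ with $\V_h(x)=0$ (a curve $\gamma$ tangent to $w$ only gives $\V_h(\gamma(t))=O(t^2)$, not $0$). Closing this gap requires real work: for instance, one can apply the strong-stable-manifold theorem to the diffeomorphism $h_\mu$, $0<\mu<1$, at its fixed point $m$ (whose spectrum on $T_mD$ is $\{\mu^k\}$) to obtain a locally invariant manifold $W$ tangent to $\bigoplus_{k\geq 2}E_k$, and then prove an interpolation estimate (using that $h_s$, $s\in[\mu,1]$, is uniformly Lipschitz near $m$) showing that every $x\in W$ satisfies $h_0(x)=m$ and $\V_h(x)=0$, so that any $x\in W\setminus\{m\}$ violates regularity. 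Alternatively one can invoke the local polynomial normal form for smooth $\Rm$-actions, but that normal form is precisely the hard content of the theorem in \cite{GR}, so assuming it would be circular. Note also that your own risk assessment inverts the difficulty: the diagonalizability of the tangent action with non-negative integer weights, which you single out as the main obstacle, is the routine part (smooth families $A_\lambda$ with $A_1=\id$ and $A_\lambda A_\mu=A_{\lambda\mu}$ admit no Jordan blocks and no negative, fractional or complex weights, by inspecting the $\lambda^{\nu}(\ln\lambda)^j$ terms), whereas the exclusion of higher weights from regularity, which you call elementary, is where the analytic substance of the theorem actually lies.
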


This theorem sets an equivalence between regular $\Rm$-actions and
vector bundles, and allows the theory of vector bundles to be
rephrased in terms of $\Rm$-actions. For instance, as immediate
consequences of Theorem~\ref{thm:vertical.lift}, we see that a
vector subbundle is the same as an invariant submanifold, and that a
vector-bundle map is the same as a smooth equivariant map. (Note
that, by continuity, it is enough to check equivariance for
$\lambda\neq 0$.) For more details, see \cite{GR}.

% reflective subcategory

\begin{remark}\label{rem:regular}
Denote by $\mathrm{VB}$ the category of vector bundles and by
$\mathrm{ACT}$ that of $\Rm$-actions. By considering the vertical
bundles associated with actions and the actions by homotheties
underlying vector bundles, we obtain a pair of functors
$$
\xymatrix@1{\mathrm{ACT} \ar@<0.2pc>[r]^V & \ar@<0.2pc>[l]^U \mathrm{VB} }.
$$
The vertical lift $\V:\id_{\mathrm{Act}} \to U\circ V$ is a natural
transformation that is invertible over the image of $U$. It easily
follows that $V$ is a left adjoint for $U$, that $U$ is fully
faithful, and hence $\mathrm{VB}$ is a {\it co-reflective
subcategory} of $\mathrm{ACT}$.
%(see eg. \cite[IV.3]{maclane}).
From this perspective, $V$ is a projection that associates to any
action a regular one, so we may think of it as a ``regularization
functor''.
\end{remark}

% factorization

\begin{remark}\label{rem:factorization}
If $D$ is a manifold equipped with an action $h:\Rm\action D$, regular or not,
the vertical lift map $\V_h:D\to TD$ can be expressed
as the following composition:
\begin{equation}\label{eq:jdiag}
\begin{matrix}
\xymatrix{ D \ar[rr]^{\V_h} \ar[dr]_{l} & & TD \\ & TD\times T\R
\ar[ur]_{\dd h} }
\end{matrix}
\end{equation}
where $l(x)=((x,0),(0,\partial_\lambda))$ is the map whose first
component is the zero section of $TD$ and whose second component is
a constant map. The factorization $\V_h= \dd h \circ l$ will be
useful in subsequent sections.
\end{remark}

%%%%%%%%%%%%%%%%%%%%%%%%%%%%%%%

\subsection{Double vector bundles}

% Definition

A {\em double vector bundle} $(D,E,A,M)$ is a commutative diagram
\begin{equation}\label{eq:D}
\begin{matrix}
D & \to & E \\ \downarrow & & \downarrow\\ A & \to & M
\end{matrix}
\end{equation}
in which every arrow is a vector bundle and so that the two
vector-bundle structures on $D$ are {\em compatible}, in the sense
that the structural maps of one (projection, zero section, fibrewise
addition and multiplication by scalars) are vector-bundle maps with
respect to the other (see \cite[Prop.~2.1]{GM}). Whenever we need to
specify the structure maps involved in a double vector bundle, we
use the notation $q^D_E$ for the bundle projection $D\to E$, $0^D_E$
for the corresponding zero section, and similarly for the structure
maps of the other vector bundles.

% Maps of double vector bundles

For double vector bundles $(D, E, A, M)$ and
$(\tilde{D},\tilde{E},\tilde{A},\tilde{M})$, a map $\Psi: D\to
\tilde{D}$ is a {\em morphism} if it gives rise to vector-bundle
maps $(\Psi,\psi_A):(D\to A)\to (\tilde{D}\to \tilde{A})$ and
$(\Psi,\psi_E): (D\to E)\to (\tilde{D}\to \tilde{E})$. It follows
that $\psi_A: A\to \tilde{A}$ and $\psi_E: E\to \tilde{E}$ are also
vector-bundle maps, covering the same map $\psi_M: M\to \tilde{M}$.
Identifying $M,A,E$ with submanifolds of $D$ via the corresponding
zero sections, the maps $\psi_E,\psi_A,\psi_M$ are just the
restrictions $\Psi|_E,\Psi|_A,\Psi|_M$.

% Sides and core

For a double vector bundle as in \eqref{eq:D} the bundles $E\to M$
and $A \to M$ are called the {\it side bundles}. The intersection of
the kernels of the projections $q^D_A: (D\to E)\to (A\to M)$ and
$q^D_E: (D\to A)\to (E\to M)$ defines another vector bundle $C\to
M$, known as the {\em core} bundle of $D$. The core and side bundles
are central ingredients in the structure of $D$: there always exists
a (non-canonical) splitting
$$
D\xto\sim A\oplus C \oplus E,
$$
i.e., an isomorphism inducing the identity on the sides and core,
where the triple sum is regarded as a double vector bundle in the
obvious way (see e.g. \cite{GM}).

% main examples

\begin{example}\label{ex:tangcot}
The main examples of double vector bundles are the tangent and
cotangent bundles of a vector bundle $A\to M$ (see e.g. \cite[\S
9.4]{Mac-book}):
\begin{equation}\label{eq:tangcot}
\begin{matrix}
TA & \to & TM \\ \downarrow & & \downarrow\\ A & \to & M,
\end{matrix}
\qquad\qquad
\begin{matrix}
T^*A & \to & A^* \\ \downarrow & & \downarrow\\ A & \to & M.
\end{matrix}
\end{equation}
If $h$ denotes the $\Rm$-action on $A$ by homotheties, the action
corresponding to the bundle structure $TA\to TM$ is $\lambda\mapsto
\dd h_\lambda$. The action corresponding to $T^*A\to  A^*$,
sometimes referred to as the {\em phase lift}, will be described in
terms of $h$ after Prop.~\ref{tildeh} below. Note that the core of
$TA$ is the vertical bundle $VA\to M$, which is isomorphic to $A\to
M$. The core of $T^*A$ can be identified with $T^*M\to M$.
\end{example}

% reversal isomorphism

\begin{remark}[The reversal isomorphism]
For a vector bundle $A\to M$, there is a canonical isomorphism of
double vector bundles,
\begin{equation}\label{eq:R}
R_A: T^*A\to T^*A^*,
\end{equation}
known as the {\em reversal isomorphism}, preserving side bundles and
restricting to $-\id$ on the cores. In local coordinates we have
splittings $T^*A\cong A\oplus T^*M\oplus A^*$ and $T^*A^*\cong A^*\oplus T^*M\oplus A$, with respect to which
$R_A(\phi,\omega,v)=(v,-\omega,\phi)$, and this turns out to be well
defined globally. For a detailed discussion, see e.g. \cite[\S
9.5]{Mac-book}.
\end{remark}

% duals of a double vector bundles

%Though many notions for double vector
%bundles are straightforward extensions of the usual ones for vector
%bundles, a remarkable novelty is that of their duality theory, that
%we need to recall from \cite[\S~9.2]{Mac-book}.

A particularly rich aspect of the theory of double vector bundles
concerns the notion of duality, that we need to recall from
\cite[\S~9.2]{Mac-book}. Associated to a double vector bundle
$(D,E,A,M)$ we have a {\em horizontal dual} and a {\em vertical
dual},
$$
\begin{matrix}
D^*_E & \to & E \\ \downarrow & & \downarrow\\ C^* & \to & M,
\end{matrix}
\qquad\qquad
\begin{matrix}
D^*_A & \to & C^* \\ \downarrow & & \downarrow\\ A & \to & M,
\end{matrix}
$$
which are double vector bundles containing the dual of the core
bundle of $D$ as side bundles, and whose cores are $A^*\to M$ and
$E^*\to M$, respectively. For example, given a vector bundle $A\to
M$, the vertical dual of its tangent bundle is its cotangent bundle,
as depicted in \eqref{eq:tangcot}, while the horizontal dual is a
new double vector bundle $((TA)^*_{TM},TM,A^*,M)$.

% cotangent cube

It is often convenient to think of a double vector bundle
\eqref{eq:D} and its two duals as parts of a larger object, the
so-called {\em cotangent cube}
\begin{equation}\label{cot.cub}
\begin{matrix}
\xymatrix@R=6pt@C=6pt{
T^*D \ar[rr] \ar[dd] \ar[rd]&  & {D^*_E} \ar'[d][dd] \ar[dr]& \\
 & D \ar[dd] \ar[rr]&  & E\ar[dd]\\
D^*_A \ar'[r][rr] \ar[dr]&  & C^* \ar[dr]& \\
& A \ar[rr]& & M.}
\end{matrix}
\end{equation}
The horizontal and vertical duals of a double vector bundle are
related by a natural pairing $D_A^*\times_{C^*} D_E^*\to
\mathbb{R}$, which induces an isomorphism of double vector bundles
\begin{equation}\label{eq:Z} Z_D: D_A^*\to (D_E^*)^*_{C^*},
\end{equation}
interchanging the side bundles and inducing $-\id$ on the cores (cf.
\cite[Thms.~9.2.2 \& 9.2.4]{Mac-book}). This shows that, when
considering the double vector bundles $D$, $D_A^*$ and $D_E^*$,
taking further (horizontal or vertical) duals basically interchanges
them.

% compatibility

\begin{remark}\label{rem:compatible}
For later use, we recall the following compatibility between the
isomorphisms \eqref{eq:R} and \eqref{eq:Z}. Given $D$ as in
\eqref{eq:D}, consider the induced vector-bundles $T^*D\to D_A^*$
and $T^*D_E^*\to (D_E^*)^*_{C^*}$. Then the reversal isomorphism
associated with $D\to E$ preserves these bundle structures and
covers \eqref{eq:Z}; i.e., the following square commutes (see
\cite[Thm.~6.1]{Mac04}):
$$
\xymatrix{T^*D  \ar[r]^R \ar[d] & T^*D^*_E \ar[d]%^{r}
\\
D^*_A \ar[r]_Z & (D^*_E)^*_{C^*},}
$$
The pair $(R,Z)$ actually defines an isomorphism between the
cotangent cubes of $D$ and $D^*_E$, that we may see as a higher
analogue of the reversal isomorphism \eqref{eq:R}.
\end{remark}

% characterization by actions

Double vector bundles admit a simple characterization in terms of
regular actions. In a double vector bundle $(D, E, A, M)$, the
actions $h,k:\Rm\action D$ corresponding to $D\to A$ and $D\to E$
commute, i.e., $h_\lambda k_\mu = k_\mu h_\lambda$ for all
$\lambda,\mu\in\R$. Conversely, if a manifold $D$ is endowed with
two commuting regular actions $h,k:\Rm\action D$, then in light of
Theorem~\ref{thm:vertical.lift} we get a commutative diagram of
vector bundles
$$
\begin{matrix}
D & \to & k_0(D) \\ \downarrow & & \downarrow\\ h_0(D) & \to & h_0k_0(D).
\end{matrix}
$$
To see that this is a double vector bundle, note that, since $h$ and
$k$ commute, the vertical lift $\V_h:D\to TD$ intertwines $k$ and
$\dd k$, so we can embed the previous square into the double vector
bundle $(TD,\dd k_0(TD),D,k_0(D))$, from where it inherits the
required compatibility condition. Hence we conclude (see \cite[Thm
3.1]{GR}):

\begin{proposition}\label{prop:DVB}
There is a one-to-one correspondence between double vector bundle
structures on $D$ and pairs of commuting regular actions $\Rm\action
D$.
\end{proposition}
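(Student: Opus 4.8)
The plan is to establish a bijection between double vector bundle structures on $D$ and pairs of commuting regular $\Rm$-actions. One direction is already sketched in the text preceding the statement, so I would organize the proof around making both directions precise and checking they are mutually inverse.

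For the forward direction, I would start with a double vector bundle $(D,E,A,M)$ and invoke Theorem~\ref{thm:vertical.lift} to replace each of the two vector-bundle structures $D\to A$ and $D\to E$ with its action by homotheties, obtaining actions $h,k:\Rm\action D$. The compatibility condition in the definition of a double vector bundle says precisely that the scalar multiplication of one structure is a vector-bundle map for the other; unwinding this, the homothety $k_\mu$ must intertwine the homotheties $h_\lambda$, which gives $h_\lambda k_\mu = k_\mu h_\lambda$. Both actions are regular since they arise from genuine vector bundles, so this produces a pair of commuting regular actions.

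For the reverse direction, I would follow the construction in the paragraph above the statement. Given commuting regular $h,k$, Theorem~\ref{thm:vertical.lift} turns each into a vector bundle $D\to h_0(D)$ and $D\to k_0(D)$, and commutativity lets one check $h_0 k_0 = k_0 h_0$ is a projection onto the common base $M=h_0 k_0(D)$, yielding the square. The key point is compatibility: I would use that $\V_h:D\to TD$ is $\Rm$-equivariant and intertwines $k$ with $\dd k$ (this uses $h_\lambda k_\mu = k_\mu h_\lambda$ together with the chain rule), so the square embeds equivariantly into the standard double vector bundle $(TD,\dd k_0(TD),D,k_0(D))$ from Example~\ref{ex:tangcot}. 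Since the latter is a double vector bundle and the embedding respects both actions, $D$ inherits the compatibility of its two structures as vector-bundle maps.

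Finally I would verify the two constructions are inverse to each other. Passing from a double vector bundle to actions and back recovers the original structures because, by Theorem~\ref{thm:vertical.lift}, the vector bundle reconstructed from the homothety action is canonically the one we started with via the vertical lift; and passing from actions to a double vector bundle and back returns the same homotheties by the same uniqueness. I expect the main obstacle to be the compatibility check in the reverse direction: showing that the four structure maps of one vector-bundle structure are vector-bundle morphisms for the other is exactly where commutativity of $h$ and $k$ must be leveraged, and the cleanest route is the equivariant embedding into $(TD,\dd k)$ rather than verifying the four conditions by hand. Everything else is an application of Theorem~\ref{thm:vertical.lift} and the functoriality of the vertical-bundle construction noted after \eqref{eq:VE}.
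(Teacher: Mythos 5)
Your proposal is correct and follows essentially the same route as the paper: both directions hinge on Theorem~\ref{thm:vertical.lift}, and the compatibility check in the reverse direction is carried out exactly as in the text preceding the statement, by using that $\V_h$ intertwines $k$ and $\dd k$ to embed the square of vector bundles into the double vector bundle $(TD,\dd k_0(TD),D,k_0(D))$. Your additional verification that the two constructions are mutually inverse is a harmless elaboration of what the paper leaves implicit in Theorem~\ref{thm:vertical.lift}.
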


% consequences

For double vector bundles $D$ and $\tilde{D}$, defined by
$\Rm$-actions $h,k$ and $\tilde{h}, \tilde{k}$ respectively, a map
$D\to \tilde{D}$ is a morphism if and only if it is equivariant for
both actions, i.e.,  it intertwines $h$ and $\tilde{h}$ as well as
$k$ and $\tilde{k}$.

% duals and actions

Let us now discuss the behavior of the $\Rm$-actions under duality
of double vector bundles. Given $D$ as in \eqref{eq:D} and its
vertical dual, let $h$, $k$, $\bar{h}$ and $\bar{k}$ denote the
actions corresponding to $D\to A$, $D\to E$, $D^*_A\to A$ and
$D^*_A\to C^*$, respectively. Then the restrictions $k|_A$ and
$\bar{k}|_A$ agree, and $k$ and $\bar{k}$ are related by the
following equation (see \cite[pp. 349]{Mac-book}):
\begin{equation}\label{pairing}
\<\bar{k}_\lambda (\xi),k_\lambda(v)\>=\lambda\<\xi,v\>,\qquad a\in
A,\ \xi\in (D^{*}_A)_a,\ v\in D_a.
\end{equation}

We will relate the homotheties of the dual, $\bar{k}_\lambda$, with
the dual relation of the homotheties $k_{{\lambda}^{-1}}$. Recall
that if $\phi:(E\to M)\to(\tilde{E}\to \tilde{M})$ is a map of
vector bundles, then its {\em dual relation} is the relation defined
by
\begin{equation}\label{eq:dual}
\phi^* := \{\big(\phi^*(\xi),\xi\big),\; x\in M,\; \xi\in
\tilde{E}^*_{\phi(x)}\} \subset E^*\times \tilde{E}^*,
\end{equation}
and if the map $\phi$ is invertible, then \eqref{eq:dual} is the
graph of an actual vector bundle map $\tilde{E}^*\to E^*$, still
denoted by $\phi^*$, that agrees with $\phi^{-1}$ on the base.

\begin{proposition}\label{tildeh}
For $\lambda\neq 0$, the following is an identity of vector-bundle
maps:
$$
\bar{k}_\lambda = (k_{{\lambda}^{-1}})^*\bar{h}_\lambda =
(k^{-1}_\lambda)^*\bar{h}_\lambda:(D^*_A\to A)\to (D^*_A\to A).
$$
\end{proposition}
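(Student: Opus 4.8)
The plan is to verify the claimed identity $\bar{k}_\lambda = (k_{\lambda^{-1}})^* \bar{h}_\lambda$ by testing both sides against the pairing \eqref{pairing}, which completely characterizes $\bar{k}_\lambda$ as a map $D^*_A \to D^*_A$ over the side bundle $A$. First I would unwind the right-hand side: for $\lambda \neq 0$, the action $h$ on $D \to A$ is an honest vector-bundle isomorphism over the base map $h_\lambda|_A$, so by the discussion following \eqref{eq:dual} the dual relation $(k_{\lambda^{-1}})^*$ is the graph of a genuine vector-bundle map, and the composite $(k_{\lambda^{-1}})^* \bar{h}_\lambda$ is a well-defined vector-bundle endomorphism of $D^*_A \to A$. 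The two displayed expressions agree because $k$ is a (linear) $\Rm$-action, so $k_{\lambda^{-1}} = k_\lambda^{-1}$ for $\lambda \neq 0$; this is the easy equality between the two forms and needs only a sentence.

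The substantive step is to compute $\langle (k_{\lambda^{-1}})^* \bar{h}_\lambda(\xi),\, v \rangle$ for $\xi \in (D^*_A)_a$ and $v \in D_a$ and match it with $\langle \bar{k}_\lambda(\xi), k_\lambda(v)\rangle = \lambda \langle \xi, v\rangle$ from \eqref{pairing}. By the definition of the dual relation \eqref{eq:dual}, applying $(k_{\lambda^{-1}})^*$ amounts to precomposing with $k_{\lambda^{-1}} = k_\lambda^{-1}$ inside the pairing, so I would write
\begin{equation*}
\langle (k_{\lambda^{-1}})^*\bar{h}_\lambda(\xi),\, w\rangle = \langle \bar{h}_\lambda(\xi),\, k_\lambda w\rangle
\end{equation*}
for $w \in D$ over the appropriate base point, reducing everything to the behaviour of $\bar{h}_\lambda$ against the pairing. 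The point is then that $\bar{h}$ is the homothety action on the \emph{other} bundle projection $D^*_A \to C^*$, which is dual to $h$ on $D \to A$ in the unscaled sense; concretely $\bar{h}_\lambda$ acts as honest scalar multiplication along the fibres of $D^*_A \to A$ (since $\bar h$ and $h$ are the two commuting actions making $D^*_A$ a double vector bundle, with $\bar h$ linear over $A$). Pairing $\bar{h}_\lambda(\xi)$ against $k_\lambda w$ and using bilinearity should produce exactly the factor $\lambda$ demanded by \eqref{pairing}, after setting $w = v$.

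The key bookkeeping obstacle, and the step I expect to be most delicate, is keeping straight which of the four actions $h, k, \bar h, \bar k$ is linear over which side bundle and how each interacts with the single bilinear pairing $\langle\,\cdot,\cdot\,\rangle$ between the fibres of $D^*_A \to A$ and $D \to A$. Since \eqref{pairing} scales $k$ against $\bar k$ by $\lambda$ while the pairing is fibrewise over $A$, I must confirm that $\bar h_\lambda$ (acting over $A$) and $k_\lambda$ (acting over $A$) are \emph{mutually dual} isomorphisms, so that $\langle \bar h_\lambda \xi, k_\lambda v\rangle = \langle \xi, v\rangle$; this is precisely the statement that the double-vector-bundle pairing is invariant under the jointly-dual pair of homotheties along the horizontal direction. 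Granting that invariance, combining it with \eqref{pairing} gives $\langle \bar h_\lambda \xi, k_\lambda v\rangle = \langle \xi, v\rangle$ and $\langle \bar k_\lambda \xi, k_\lambda v\rangle = \lambda\langle \xi, v\rangle$, and substituting into the reduced expression above identifies $(k_{\lambda^{-1}})^*\bar h_\lambda$ with $\bar k_\lambda$ as maps over $A$. Finally I would note that both sides cover the same base map on $A$ (namely $h_\lambda|_A$, which equals $\bar h_\lambda|_A$) and depend continuously on $\lambda$, so the endomorphism identity holds for all $\lambda \neq 0$ as claimed.
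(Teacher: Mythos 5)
Your overall strategy---characterize $\bar{k}_\lambda$ by testing against the pairing \eqref{pairing} and reduce everything to the behaviour of $\bar{h}_\lambda$---is exactly the paper's, but the execution contains errors that break the argument. The central problem is your ``invariance'' claim $\langle \bar{h}_\lambda \xi, k_\lambda v\rangle = \langle \xi, v\rangle$. This expression is not even well formed: the pairing is fibrewise over $A$, and $\bar{h}_\lambda \xi$ stays in the fibre $(D^*_A)_a$ (since $\bar{h}_\lambda$ covers the identity of $A$), whereas $k_\lambda v$ lies in $D_{k_\lambda(a)}$ (as a map of $D\to A$, $k_\lambda$ covers $k_\lambda|_A$, which is multiplication by $\lambda$ on $A\to M$, not the identity). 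Moreover the scaling is wrong: the actual crux of the proof is that $\bar{h}_\lambda$ is fibrewise multiplication by $\lambda$ on $(D^*_A)_a = (D_a)^*$, so $\langle \bar{h}_\lambda \xi, v\rangle = \lambda\langle \xi, v\rangle$ for $v\in D_a$, with both arguments over the \emph{same} point $a$. In a splitting $D\cong A\oplus C\oplus E$, writing $\xi=(a,\gamma,\epsilon)$ and $v=(a,c,e)$, your claimed identity would read $\lambda^2\langle\gamma,c\rangle+\lambda\langle\epsilon,e\rangle=\langle\gamma,c\rangle+\langle\epsilon,e\rangle$, which is false. (Your parenthetical mislabelling of $\bar h$ as the action on $D^*_A\to C^*$---that is $\bar k$---feeds into this confusion.)

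A second error compounds the first: your unwinding of the dual relation is backwards. By \eqref{eq:dual}, $(k_{\lambda^{-1}})^*$ precomposes with $k_{\lambda^{-1}}=k_\lambda^{-1}$, not with $k_\lambda$, so the right test vectors are $k_\lambda v$ (so that the precomposition returns $v$): one gets $\langle (k_\lambda^{-1})^*\bar{h}_\lambda\xi,\, k_\lambda v\rangle = \langle \bar{h}_\lambda\xi, v\rangle = \lambda\langle\xi,v\rangle$, which matches \eqref{pairing} tested against the \emph{same} vectors $k_\lambda v$; since $k_\lambda\colon D_a\to D_{k_\lambda(a)}$ is onto for $\lambda\neq 0$, this forces $(k_\lambda^{-1})^*\bar{h}_\lambda\xi = \bar{k}_\lambda\xi$. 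That is the paper's proof. As written, your chain pairs $(k_{\lambda^{-1}})^*\bar{h}_\lambda\xi$ against $v$ but $\bar{k}_\lambda\xi$ against $k_\lambda v$, so even granting your identities the two maps are never compared on the same vectors, and the conclusion does not follow. Finally, your closing remark that both sides cover $h_\lambda|_A=\bar{h}_\lambda|_A$ on the base is also wrong: those restrictions are the identity of $A$; both sides actually cover $k_\lambda|_A=\bar{k}_\lambda|_A$, which is precisely what the paper verifies in its base-point step.
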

\begin{proof}
Let us show that the maps agree over the base and on each fiber. If
$a\in A$, since $k|_A= \bar{k}|_A$ and $\bar{h}|_A$ is trivial, we
have $\bar{k}_\lambda (a) = k_\lambda (a) = (k^{-1}_\lambda)^* (a) =
(k^{-1}_\lambda)^* \bar{h}_\lambda (a)$. Now let $\xi\in (D_A^*)_a$,
so $\xi:D_a\to\R$ is a linear map. Since the linear structure on
$D_a$ is given by $h_\lambda$ we have $\<\bar{h}_\lambda(\xi),v\> =
\lambda \<\xi,v\>$, for $v\in D_a$. Therefore
$$
\<(k_\lambda^{-1})^*\bar{h}_\lambda (\xi), k_\lambda (v)\> =
%\<\bar{h}_\lambda (\xi) k_{\lambda}^{-1}, k_\lambda (v)\> =
\<\bar{h}_\lambda (\xi) , v\> = \lambda \<\xi,v\>,
$$
which shows that $(k_\lambda^{-1})^*\bar{h}_\lambda (\xi)$ must be
$\bar{k}_\lambda (\xi)$, by \eqref{pairing}.
\end{proof}

% characterization of the phase lift

As a corollary we obtain an explicit description of the phase lift
(see Example~\ref{ex:tangcot}): if $h:\Rm \action E$ is a regular
action, then the action $\Rm \action T^*E$ corresponding to $T^*E\to
E^*$ is given, for each $\lambda\neq 0$, by
\begin{equation}\label{eq:phase}
\lambda \cdot (\dd h_{{\lambda}^{-1}})^*:T^*E\to T^*E,
\end{equation}
where $\lambda \cdot(-)$ stands for the multiplication by $\lambda$
in the canonical structure $T^*E\to E$.

%%%%%%%%%%%%%%%%%%%%%%%

\subsection{Linear Poisson structures}
\label{subsec:linear}

% linear and constant functions

Given a vector bundle $q: E\to M$, a function $f\in\C(E)$ is said to
be {\em linear} (resp. \emph{basic}) if it is linear (resp.
constant) when restricted to each fiber. Linear functions are in
one-to-one correspondence with sections of the dual bundle,
\begin{equation}\label{eq:linear}
\Gamma(E^*)\ni \xi \mapsto \ell_\xi \in C^\infty(E),\;\;
\ell_\xi(v)=\SP{\xi,v},
\end{equation}
while basic functions correspond to pullbacks of functions defined over the base,
$$
C^\infty(M) \ni f \mapsto q^*f \in C^\infty(E).
$$

% definition of linear Poisson structures

A {\em linear Poisson structure} on $E\to M$ is a Poisson structure
$\{\cdot,\cdot\}$ on the total space $E$ of the vector bundle $E\to
M$ which satisfies the following:
\begin{enumerate}
\item[$(i)$] $f,g$ linear $\then$ $\{f,g\}$ linear,
\item[$(ii)$] $f$ linear, $g$ basic $\then$ $\{f,g\}$ basic,
\item[$(iii)$] $f,g$ basic $\then$ $\{f,g\}=0$.
\end{enumerate}

% bivector

Linear Poisson structures can be also described by means of Poisson
bivector fields $\pi \in \Gamma(\wedge^2TE)$: a Poisson structure
$\pi$ on $E$ is linear if and only if $\pi^\#:T^*E\to TE$ yields a
map of double vector bundles (see e.g. \cite[Sec.~7.2]{KU}):
\begin{equation}\label{eq:lps}
 \begin{matrix}
T^*E & \to & E^* \\ \downarrow & & \downarrow\\ E & \to & M
\end{matrix}
\qquad\xto{\pi^\#}\qquad
\begin{matrix}
TE & \to & TM \\ \downarrow & & \downarrow\\ E & \to & M.
\end{matrix}
\end{equation}
An example is given by the canonical Poisson structure on $E=T^*M$.

% characterization of linear Poisson structures

The following is an alternative characterization of linear Poisson
structures via $\Rm$-actions:

\begin{proposition}
\label{linear} Let $E\to M$ be a vector bundle, with regular action
$h:\Rm\action E$, and let $\pi$ be a Poisson bivector field on $E$.
Then the Poisson structure is linear if and only if
$h_\lambda:(E,\pi)\to(E,\lambda\pi)$ is a Poisson map for all
$\lambda \neq 0$.
\end{proposition}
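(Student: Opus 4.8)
The plan is to restate the Poisson-map condition as a single bracket identity and then verify it on functions generating $C^\infty(E)$. For $\lambda\neq 0$ the homothety $h_\lambda$ is a diffeomorphism of $E$ with inverse $h_{\lambda^{-1}}$, so $h_\lambda\colon (E,\pi)\to(E,\lambda\pi)$ is Poisson exactly when $(h_\lambda)_*\pi=\lambda\pi$; in terms of brackets this reads
\begin{equation}\label{eq:Pmap}
\{h_\lambda^* f,\,h_\lambda^* g\}=\lambda\,h_\lambda^*\{f,g\},\qquad f,g\in C^\infty(E),\ \lambda\neq 0.
\end{equation}
The first thing I would record is the behaviour of $h_\lambda^*$ on the generators from \eqref{eq:linear}: a linear function has $h_\lambda^*\ell_\xi=\lambda\ell_\xi$ because $\ell_\xi(h_\lambda v)=\lambda\ell_\xi(v)$, while a basic function has $h_\lambda^*(q^* f)=q^* f$ because $q\circ h_\lambda=q$. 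Thus linear and basic functions are homogeneous of $h$-degree $1$ and $0$ respectively.

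The second ingredient is the converse dictionary between homogeneity and function type, which I would isolate as a short lemma: a smooth $F$ with $h_\lambda^* F=\lambda F$ for all $\lambda\neq 0$ is linear, one with $h_\lambda^* F=F$ is basic, and one with $h_\lambda^* F=\lambda^{-1}F$ is identically zero. The first is the standard identification of degree-$1$ functions with linear ones, via $F(v)=\tfrac{d}{d\lambda}\big|_0 F(h_\lambda v)=(dF)_{h_0(v)}(\V_h(v))$, which is linear in $v$; the second follows since $F(v)=F(h_0(v))$ by continuity at $\lambda=0$, so $F$ factors through $h_0\colon E\to h_0(E)=M$; and the third because $\lambda\mapsto F(h_\lambda v)=\lambda^{-1}F(v)$ can be continuous at $\lambda=0$ only if $F(v)=0$.

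With these two dictionaries the proof becomes degree bookkeeping: if $f,g$ carry $h$-degrees $p,q$, then \eqref{eq:Pmap} forces $\{f,g\}$ to have degree $p+q-1$. For the forward implication I would assume $(i)$--$(iii)$ and check \eqref{eq:Pmap} on the pairs (linear, linear), (linear, basic) and (basic, basic): here $p+q-1$ equals $1,0,-1$, so the three conditions say exactly that $\{f,g\}$ has the correct degree, and both sides of \eqref{eq:Pmap} are then computed directly from the scaling rules above. To pass from generators to arbitrary $f,g$ I would use that the differentials of linear and basic functions span $T^*_vE$ at every point (in an adapted chart they are the coordinate differentials $dx^i,d\xi_a$), so that the bivector fields $(h_\lambda)_*\pi$ and $\lambda\pi$ agree as soon as their brackets agree on such functions. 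For the converse I would assume \eqref{eq:Pmap} and, for $f,g$ linear or basic, put $F=\{f,g\}$, read off its degree $p+q-1$ from \eqref{eq:Pmap}, and apply the lemma to conclude that $F$ is linear, basic, or zero in the three cases --- which is precisely $(i)$, $(ii)$, $(iii)$.

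The step needing the most care is this reduction to generators --- namely the claim that $(h_\lambda)_*\pi=\lambda\pi$ may be tested only on linear and basic functions --- together with the degree-$(-1)$ case, where smoothness of the action at $\lambda=0$ is exactly what forces the bracket of two basic functions to vanish. Once those are in place, the remainder is a mechanical application of $h_\lambda^*\ell_\xi=\lambda\ell_\xi$ and $h_\lambda^*(q^* f)=q^* f$.
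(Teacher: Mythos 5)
Your proof is correct, but it takes a genuinely different route from the paper's. The paper never touches the bracket axioms $(i)$--$(iii)$ directly: it quotes the characterization \eqref{eq:lps} of linearity as the condition that $\pi^\#\colon T^*E\to TE$ be a morphism of double vector bundles (citing \cite{KU}), identifies such morphisms with maps that are equivariant for both $\Rm$-actions, and then uses the phase-lift formula \eqref{eq:phase} (a consequence of Prop.~\ref{tildeh}) to rewrite this equivariance as $\lambda\,\pi^\#=(\dd h_\lambda)\,\pi^\#\,(\dd h_\lambda)^*$, which is verbatim the condition that $h_\lambda\colon(E,\pi)\to(E,\lambda\pi)$ be Poisson. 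That is a two-line translation, but it leans on the duality machinery already assembled in Section 2. You instead argue at the level of functions: you encode the Poisson-map property as the bracket homogeneity $\{h_\lambda^*f,h_\lambda^*g\}=\lambda\,h_\lambda^*\{f,g\}$, set up the dictionary between $h$-degrees $1,0,-1$ and linear, basic, and identically vanishing functions, and match degrees on generators. Your supporting steps are sound: the degree-$1$ case of your lemma follows from $F(v)=(\dd F)_{h_0(v)}(\V_h(v))$ together with fibrewise linearity of the vertical lift, the degree-$(-1)$ case correctly exploits smoothness of the action at $\lambda=0$, and the reduction to generators only needs the locality of the Poisson bracket (a bump-function argument turns local fibre coordinates $\xi_a$ into globally defined linear functions near any point, so the axioms $(i)$--$(iii)$, stated for global functions, do apply in your adapted charts). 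What your approach buys is elementarity and self-containedness --- no appeal to \eqref{eq:lps}, to double-vector-bundle duality, or to Prop.~\ref{tildeh}; what the paper's approach buys is brevity, given its established framework, and an explicit equivariance statement for $\pi^\#$ of the kind that reappears later in \eqref{eq:anchorbialg} and \eqref{eq:panchorgpd}.
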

\begin{proof}
The linearity of the Poisson structure is equivalent to the
condition that the map $\pi^\#: T^*E\to TE$ intertwines the actions
corresponding to the bundles $T^*E\to E^*$ and $TE\to TM$. This
means that, for each $\lambda\neq 0$,
$$
\pi^\# (\lambda (\dd h_{{\lambda}^{-1}})^*) = (\dd h_{\lambda})
\pi^\#,
$$
(see Example~\ref{ex:tangcot} and \eqref{eq:phase}), which can be
re-written as $\lambda \pi^\# = (\dd h_{\lambda}) \pi^\# (\dd
h_{\lambda})^*$, exactly the condition for
$h_\lambda:(E,\pi)\to(E,\lambda\pi)$ being a Poisson map.
\end{proof}

One can immediately apply the previous proposition to characterize
{\em double linear Poisson structures}, i.e., Poisson structures on
double vector bundles which are linear with respect to both
vector-bundle structures (see \cite[Sec.~3.4]{GM}).

% LA - PVB algebroids

A key fact, to be used recurrently throughout the paper, is the
duality between linear Poisson structures and Lie algebroids. For a
vector bundle $A\to M$, there is a one-to-one correspondence between
Lie algebroid structures $A\then M$ and linear Poisson structures on
the dual $A^*\to M$ (see e.g. \cite[Chp.~10]{Mac-book}), via the
relations
$$
\{\ell_X,\ell_Y\}=\ell_{[X,Y]}, \;\; \{\ell_X,q^*f\}=q^*(\rho(X)f),
$$
for $X,Y\in \Gamma(A)$, $f\in C^\infty(M)$. As for the behavior of
maps under this correspondence, if $\phi:(A_1\to M_1)\to (A_2 \to
M_2)$ is a map between the underlying vector bundles of given Lie
algebroids, then it is a Lie algebroid map if and only if its dual
relation $\phi^*$ (see \eqref{eq:dual}) is coisotropic in
$A_1^*\times \overline{A_2^*}$ (here $^-$ indicates that $A_2^*$ is
equipped with the opposite Poisson structure).
%If in addition $\phi$ is an isomorphism, then its dual relation defines an isomorphism of linear Poisson structures.
We will discuss more general instances of this duality in
Section~\ref{sec:DLA}.

% algebroids from Poisson structures

\begin{remark}\label{rem:poisson}
Recall that a Poisson manifold has an induced Lie-algebroid
structure on its cotangent bundle, and hence a linear Poisson
structure on its tangent bundle, known as the {\em tangent lift}.
For a vector bundle $E$ equipped with a linear Poisson structure,
there are induced Lie-algebroid structures on $E^*$ and $T^*E$, and
these are compatible in the sense that the natural projection
$T^*E\to E^*$ is a Lie-algebroid map (cf.
\cite[Prop.~10.3.6]{Mac-book}). Moreover, the tangent-lift Poisson
structure on $TE$ is double linear (cf.
\cite[Thm.~10.3.14]{Mac-book}). %\comment{remark only used in Ex.\ref{ex:ctgVB}?}
\end{remark}

%%%%%%%%%%%%%%%%%%%%%%%%%%%%%%%%%%%%%%%%%%%%%%%%%%%%%%%%%%%%%

\section{VB-groupoids and VB-algebroids as regular actions}\label{sec:regactions}

% intro section

Just as Lie groupoids generalize both smooth manifolds and Lie
groups, VB-groupoids simultaneously encompass  vector bundles and
representations of Lie groups, see e.g. \cite[\S 11.2]{Mac-book} and
\cite{GM2}. VB-algebroids are the analogous infinitesimal objects,
see e.g. \cite{GM,dla}. In this section we present new
characterizations of VB-groupoids and VB-algebroids in terms of
$\Rm$-actions. In spite of the clear analogy between the results, we
point out that the arguments justifying them will often differ in
spirit, reflecting the structural differences in how Lie algebroids
and groupoids are defined.

%%%%%%%%%%%%%%%%%%%%%%%%%%%%%%%%%%%%%%%%%%%%%%%%%%%%%%%%%%%%%%%%%%%%%%%%%%%%%%%%%%%%
\subsection{VB-groupoids}\label{subsec:VBgrp}

% VB-groupoid

A {\em VB-groupoid} (cf. \cite[Def.~3.3]{GM2}) consists of Lie
groupoids $\Gamma\toto E$, $G\toto M$, and vector bundles $\Gamma\to
G$, $E\to M$, forming a diagram
\begin{equation}\label{eq:VBgrd}
\begin{matrix}
\Gamma & \toto & E\\
\downarrow & & \downarrow\\
G& \toto & M
\end{matrix}
\end{equation}
that is compatible in the following sense: the groupoid structural
maps of $\Gamma$ (source, target, multiplication, unit, inverse)
cover the corresponding ones of $G$ and are vector-bundle maps. (To
consider the compatibility for the multiplication we view
$\Gamma\times_E\Gamma\to G\times_M G$ as a vector subbundle of the
product.) We will refer to $\Gamma\toto E$ as the {\em total
groupoid} and to $G\toto M$ as the {\em base groupoid}.

% Core

Given a VB-groupoid as above, its (right) {\em core} $C\to M$ is
defined as the kernel of the vector-bundle map $(u_G^*\Gamma\to M)
\stackrel{\sour_\Gamma}{\to} (E\to M)$. The core plays a key role in
the structure of a VB-groupoid: there is a short exact sequence of
bundles over $G$,
$$
0 \to \tar_G^*C\to \Gamma \xto{\sour_\Gamma} \sour_G^*E \to 0,
$$
called the {\em (right) core exact sequence}, and any splitting of
this sequence induces a decomposition of the VB-groupoid into the
base groupoid, the vector bundles $C$ and $E$, and some extra
algebraic data \cite{GM2} (see Remark~\ref{rmk:ruth}).

% Maps

A {\em VB-groupoid map} (or {\em morphism})
$$
\begin{matrix}
\Gamma & \toto & E\\
\downarrow & & \downarrow\\
G& \toto & M
\end{matrix}
\qquad\to{}\qquad
\begin{matrix}
\tilde{\Gamma} & \toto & \tilde{E}\\
\downarrow & & \downarrow\\
\tilde{G} & \toto & \tilde{M}
\end{matrix}
$$
is defined by a Lie groupoid map $\Phi$ between the total Lie
groupoids which is linear. It follows that the restriction $\Phi|_E:
E\to \tilde{E} $ is also linear, $\Phi|_G: G\to \tilde{G}$ is a Lie
groupoid map, and core bundles are preserved.

%between VB-groupoids consists of four smooth maps relating the corresponding manifolds and preserving the groupoid and vector bundle structures,
%In other words, a VB-groupoid map

We denote by $\mathrm{VB}(G\toto M)$ the category of VB-groupoids
over $G\toto M$, with morphisms being VB-groupoid maps restricting
to the identity on $G\toto M$.

%Notice that VB-maps between two given VB-groupoids form a real vector space.

\begin{remark}\label{rem:sscandmap}\
\begin{enumerate}[(a)]
\item  The core exact sequence implies that each
source-fiber of $\Gamma$ is an affine bundle over a source-fiber of
$G$, and hence $\Gamma$ is source-connected, or
source-simply-connected, if and only if so is $G$.

\item The core exact sequence is
natural with respect to maps, from where one verifies that a
VB-groupoid map $\Phi$ as above is fiberwise injective (resp.
surjective) if and only if it is so when restricted to the side
bundle $E$ and the core $C$.
\end{enumerate}
\end{remark}

% examples

\begin{example}[Tangent groupoid]
\label{ex:tangG}
Given a Lie groupoid $G\toto M$, its {\em tangent
groupoid} $TG\toto TM$ is defined by differentiating the structural
maps of $G\toto M$. It is naturally a VB-groupoid over $G\toto M$,
whose core $A_G\to M$ is the vector bundle of the Lie algebroid of
$G$.
% A horizontal lift for the tangent groupoid is the same as a {\em %connection} for $G$.
One can readily check that the passage from Lie groupoids to their tangent groupoids is functorial.
\end{example}

\begin{example}
[Cotangent groupoid] Given a Lie groupoid $G\toto M$, it is also
possible to define its {\em cotangent groupoid}. Its space of arrows
consists of the cotangent bundle $T^*G$, and its objects are given
by the dual of the core, $A_G^*$. The structural maps of $T^*G\toto
A_G^*$ are described, e.g., in \cite[\S~11.3]{Mac-book}. This
groupoid structure makes $T^*G\toto A_G^*$ into a VB-groupoid over
$G\toto M$, with core $T^*M\to M$.
\end{example}

\begin{example}[Representations]
\label{ex:rep.groupoids} Given a representation of a Lie groupoid
$G\toto M$ on a vector bundle $E\to M$, the corresponding action
groupoid $G\times_M E\toto E$ is naturally a VB-groupoid over
$G\toto M$, whose core is trivial. One may directly verify that
every VB-groupoid with trivial core arises in this way.
\end{example}

% duality

There is a duality construction for VB-groupoids, for which the
cotangent groupoid $T^*G\toto A_G^*$ is the dual of $TG\toto TM$.
Given a VB-groupoid $\Gamma$ as in \eqref{eq:VBgrd}, its {\em dual
VB-groupoid} has the same base as \eqref{eq:VBgrd} and total
groupoid with arrows $\Gamma^*$ and objects $C^*$, the total space
of the dual of the core bundle $C\to M$,
\begin{equation}\label{eq:dualgp}
\begin{matrix}
\Gamma^* & \toto & C^*\\
\downarrow & & \downarrow\\
G& \toto & M.
\end{matrix}
\end{equation}
%The source and target maps of $\Gamma^*$ are such that the (left)
%core exact sequence of $\Gamma^*$ is the dual of the (right) core
%exact sequence of $\Gamma$.
%\ale{removed repetitive sentence about source and target...}

For the definition of the groupoid structural maps on $\Gamma^*\toto
C^*$, we refer to \cite[\S 11.2]{Mac-book}. Given a VB-groupoid map
$\Phi: \Gamma\to \tilde{\Gamma}$ covering the identity $G\to G$, the
dual map $\Phi^*:\tilde{\Gamma}^*\to \Gamma$ is a VB-groupoid map,
so taking duals defines an involutive functor on $\mathrm{VB}(G\toto
M)$.

% ruth

\begin{remark}
\label{rmk:ruth} As suggested by Example \ref{ex:rep.groupoids}, one
can think of VB-groupoids as generalized representations \cite{GM2}.
By choosing a splitting of the core exact sequence of
\eqref{eq:VBgrd}, one can associate to every arrow in $G\toto M$ a
linear map between the fibers of the complex \smash{$C\xto{t_\Gamma}
E$}, and this yields a {\em representation up to homotopy}
\cite{AC2}. It is proven in \cite{GM2} that there is a one-to-one
correspondence between isomorphism classes of VB-groupoids and of
2-term representations up to homotopy.
\end{remark}

%%%%%%%%%%%%

\subsection{Characterization as multiplicative actions}
\label{sec:multact}

% Multiplicative actions

We will now use Thm.~\ref{thm:vertical.lift} to establish a
characterization of VB-groupoids by means of $\Rm$-actions,
simplifying their original definition.

\begin{definition}\label{def:mult}
An $\Rm$-action $h$ on the space of arrows of a Lie groupoid
$\Gamma\toto E$ is called {\em multiplicative} if $h_\lambda:
\Gamma\to \Gamma$ is a groupoid map for each $\lambda\in
\mathbb{R}$.
\end{definition}

Note that the restriction $h|_E$ is an $\Rm$-action on $E$, which is
regular if $h$ is. It is often convenient to think of $h$ as a pair
of actions on $\Gamma$ and $E$, in which case we use the notation
$(h^\Gamma,h^E)$.

% fixed points

Recall that the fixed points of an $\Rm$-action $h$, i.e., the image
of $h_0$, define an embedded submanifold. For a multiplicative
action, we have the following:

\begin{lemma}\label{lem:H_0.gpd}
Let $h=(h^\Gamma,h^E)$ be a multiplicative $\Rm$-action on
$\Gamma\toto E$. Then its fixed points define an embedded Lie
subgroupoid $h^\Gamma_0(\Gamma)\toto h^E_0(E)$ of $\Gamma\toto E$.
Moreover, this Lie subgroupoid is source-simply-connected whenever
$\Gamma$ is.
\end{lemma}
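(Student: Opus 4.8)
The plan is to prove this lemma in two parts: first that the fixed-point sets assemble into an embedded Lie subgroupoid, and then that source-simple-connectedness is inherited. For the first part, I would exploit the multiplicativity of $h$ directly. Since each $h^\Gamma_\lambda$ is a groupoid map, in particular $h^\Gamma_0$ is a groupoid map covering $h^E_0$, and a groupoid map commutes with all structural maps. The fixed-point set $h^\Gamma_0(\Gamma)$ is the image of the idempotent $h^\Gamma_0$, which by the discussion following Theorem~\ref{thm:vertical.lift} (and the standing constant-rank assumption) is an embedded submanifold of $\Gamma$; likewise $h^E_0(E)$ is embedded in $E$. The key point is that because $h^\Gamma_0$ is a groupoid morphism, its image is closed under the groupoid operations: if $g,g'$ are fixed and composable, then $h^\Gamma_0(gg')=h^\Gamma_0(g)h^\Gamma_0(g')=gg'$, so $gg'$ is fixed; similarly for inverses and units. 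Hence $h^\Gamma_0(\Gamma)\toto h^E_0(E)$ is an embedded subgroupoid, and it is a Lie subgroupoid once we check that the source map restricts to a submersion onto the embedded base.

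For the submersion check I would argue as follows. The source map $\sour_\Gamma$ intertwines $h^\Gamma$ and $h^E$ (since $h^\Gamma_\lambda$ is a groupoid map), so it carries $h^\Gamma_0(\Gamma)$ onto $h^E_0(E)$. The restriction $\sour_\Gamma|_{h^\Gamma_0(\Gamma)}$ is the map of fixed-point sets induced by the equivariant map $\sour_\Gamma$; since $\sour_\Gamma$ is a submersion and both fixed-point sets arise as images of the corresponding idempotents $h_0$, one verifies that the restricted source map is again a submersion by a rank count, using that $T_x h^\Gamma_0(\Gamma)=\dd h^\Gamma_0(T_x\Gamma)$ and the analogous statement on $E$. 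This gives the embedded Lie subgroupoid structure.

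For the second part, the source-simple-connectedness, I would use the structure of the source fibers. Fix a unit and consider the source fiber of $\Gamma$ over it; the embedded subgroupoid's source fiber is exactly the fixed-point set of $h^\Gamma_0$ restricted to that fiber. The cleanest route is to invoke Remark~\ref{rem:sscandmap}(a): the core exact sequence shows each source fiber of a VB-groupoid is an affine bundle over a source fiber of the base groupoid, so source-connectedness and source-simple-connectedness are controlled by the base. Applying this to the subgroupoid $h^\Gamma_0(\Gamma)\toto h^E_0(E)$, whose base groupoid $h^E_0(E)$ coincides with the fixed points on the side, one reduces the topological question to comparing source fibers of $\Gamma$ and of its fixed subgroupoid. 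Since $h^\Gamma_0$ is a retraction that is moreover $\Rm$-equivariant, the inclusion of the fixed source fiber into the full source fiber admits a deformation retraction given by the $\Rm$-action path $\lambda\mapsto h^\Gamma_\lambda$ as $\lambda\to 0$, so the two fibers are homotopy equivalent; hence simple-connectedness of one transfers to the other.

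The main obstacle I anticipate is the submersion/constant-rank verification for the restricted source map, i.e., confirming that $h^\Gamma_0(\Gamma)\toto h^E_0(E)$ is genuinely a \emph{Lie} subgroupoid and not merely a set-theoretic one. This requires carefully matching the tangent spaces to the fixed-point submanifolds (which are images of $\dd h_0$) with the behavior of $\dd\sour_\Gamma$, and ensuring the ranks line up uniformly; the equivariance of the structural maps is what makes this work, but it is the step demanding the most care. The homotopy-retraction argument for the second part is conceptually clean once the deformation is set up, but one should be mindful that the retraction $\lambda\mapsto h^\Gamma_\lambda(x)$ must be shown to preserve the relevant source fiber throughout, which again follows from equivariance of $\sour_\Gamma$.
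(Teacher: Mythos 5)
Your proposal follows essentially the same route as the paper's proof of Lemma~\ref{lem:H_0.gpd}: embeddedness of the fixed-point sets from the idempotents $h_0$ (with the standing constant-rank convention), closure under the groupoid operations because $(h^\Gamma_0,h^E_0)$ is a groupoid morphism, and the submersion property of the restricted source map from equivariance. The paper compresses your ``rank count'' into the single identity $\sour_G\circ h^\Gamma_0 = h^E_0\circ \sour_\Gamma$: since $h^E_0\colon E\to M$ and $\sour_\Gamma$ are submersions and $T_gG=\dd h^\Gamma_0(T_g\Gamma)$, this identity immediately gives $\dd\sour_G(T_gG)=\dd h^E_0\bigl(\dd\sour_\Gamma(T_g\Gamma)\bigr)=T_xM$, which is exactly the computation you sketch. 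For the topological statement, the paper only uses that $h^\Gamma_0$ restricts to a \emph{retraction} $\sour_\Gamma^{-1}(x)\to\sour_G^{-1}(x)$, hence a surjection on $\pi_1$ (and $\pi_0$); you upgrade this to a \emph{deformation} retraction along $\lambda\mapsto h^\Gamma_\lambda$, which also works, since points of $M$ satisfy $h^E_\lambda(x)=h^E_\lambda(h^E_0(x))=h^E_0(x)=x$, so the homotopy indeed stays inside the source fiber, and likewise it fixes $\sour_G^{-1}(x)$ pointwise.

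One step you should delete, however, is the appeal to Remark~\ref{rem:sscandmap}(a). That remark concerns VB-groupoids, and in this lemma the action $h$ is \emph{not} assumed regular, so there is no VB-groupoid structure on $\Gamma\toto E$ over $h^\Gamma_0(\Gamma)\toto h^E_0(E)$ to invoke; moreover, even in the regular case, that structure is produced by Theorem~\ref{thm:VB-gpd}, whose proof relies on the present lemma, so using it here would be circular. Fortunately this appeal is not load-bearing: your deformation-retraction argument compares $\sour_\Gamma^{-1}(x)$ and $\sour_G^{-1}(x)$ directly and needs nothing beyond the multiplicativity of $h$, so the proof stands once that detour is removed.
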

\begin{proof}
Write $G=h^\Gamma_0(\Gamma)$ and $M=h^E_0(E)$. Since $h^\Gamma$ and
$h^E$ are $\Rm$-actions, we have that $G\subset\Gamma$ and $M\subset
E$ are embedded submanifolds. Using that $(h^\Gamma_0,h^E_0)$ is a
groupoid map, one may verify that $G$ and $M$ define a set-theoretic
subgroupoid of $\Gamma\toto E$.  It remains to check that the source
map of $G\toto M$ is a submersion (cf. Section~\ref{subsec:subgrp}),
which follows from the identity $\sour_G\circ h^\Gamma_0 =
h^E_0\circ \sour_\Gamma$. As for the second assertion in the lemma,
for each $x\in M$ the map $h^\Gamma_0$ defines a retraction
$\sour_\Gamma^{-1}(x) \to \sour_G^{-1}(x)$, so it induces a
surjective map at the level of fundamental groups.
\end{proof}

% characterization

The next theorem characterizes VB-groupoids by means of regular
actions. We will provide a direct proof of this result now, and
discuss it from a broader viewpoint in the next section.

\begin{theorem}\label{thm:VB-gpd}
There is a one-to-one correspondence between VB-groupoids with total
groupoid $\Gamma \toto E$ and regular multiplicative actions
$\Rm\action(\Gamma \toto E)$.
\end{theorem}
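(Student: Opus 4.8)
The plan is to establish the correspondence in both directions, using Theorem~\ref{thm:vertical.lift} (the equivalence between regular $\Rm$-actions and vector bundles) as the bridge between the two sides of the claimed bijection. The forward direction is the easy one: given a VB-groupoid as in \eqref{eq:VBgrd}, the bundle structures on $\Gamma\to G$ and $E\to M$ come with fibrewise scalar multiplications, which are regular $\Rm$-actions $h^\Gamma$ and $h^E$ by the key example preceding Definition~\ref{def:mult}. The compatibility condition in the definition of a VB-groupoid says precisely that the structural maps of $\Gamma$ are vector-bundle maps covering those of $G$; by the remark following Theorem~\ref{thm:vertical.lift} (vector-bundle maps are the same as smooth equivariant maps), this is exactly the statement that each $h_\lambda = (h^\Gamma_\lambda, h^E_\lambda)$ is a groupoid map, i.e.\ that $h$ is multiplicative in the sense of Definition~\ref{def:mult}. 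So a VB-groupoid yields a regular multiplicative action.

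\medskip

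\emph{For the converse,} I start with a regular multiplicative action $h=(h^\Gamma,h^E)$ on $\Gamma\toto E$ and must recover a VB-groupoid structure. By Theorem~\ref{thm:vertical.lift}, regularity of $h^\Gamma$ and $h^E$ endows $\Gamma\to h^\Gamma_0(\Gamma)$ and $E\to h^E_0(E)$ with canonical vector-bundle structures via the vertical lifts. By Lemma~\ref{lem:H_0.gpd}, the fixed-point sets form an embedded Lie subgroupoid $G:=h^\Gamma_0(\Gamma)\toto M:=h^E_0(E)$, which will serve as the base groupoid; thus I obtain the square \eqref{eq:VBgrd} with $\Gamma\to G$ and $E\to M$ vector bundles and $G\toto M$, $\Gamma\toto E$ Lie groupoids. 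What remains is to verify the compatibility: that the five structural maps of $\Gamma\toto E$ are vector-bundle maps covering those of $G\toto M$. Each structural map is $\Rm$-equivariant precisely because $h$ is multiplicative — the equation $h_\lambda\circ m = m\circ(h_\lambda\times h_\lambda)$, and similarly for source, target, unit, and inverse, is just the statement that each $h_\lambda$ is a groupoid endomorphism. Invoking once more the equivalence ``equivariant map $=$ vector-bundle map'' from Theorem~\ref{thm:vertical.lift}, each structural map is automatically a vector-bundle map, and it covers the corresponding structural map of $G\toto M$ because the latter is the restriction to fixed points. This gives a VB-groupoid, and the two constructions are manifestly mutually inverse since both directions are governed by the same identification of vector bundles with regular actions.

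\medskip

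\emph{The main subtlety} I anticipate is the multiplication map, since the compatibility there is phrased in terms of the fibred product $\Gamma\times_E\Gamma \to G\times_M G$ being a vector subbundle rather than a plain vector bundle. Here I need to check that the product action $(h^\Gamma\times h^\Gamma)$ restricts to a regular action on $\Gamma\times_E\Gamma = \Gamma^{(2)}$ and that $h^\Gamma$ is equivariant for it, so that Theorem~\ref{thm:vertical.lift} applies to turn $m$ into a vector-bundle map. That $\Gamma^{(2)}$ is invariant under the diagonal action follows from $h^\Gamma_\lambda$ being a groupoid map (it preserves composability), and that the restricted action is regular follows because $\Gamma^{(2)}$ is an embedded submanifold on which the ambient vertical lift restricts compatibly; the zero-velocity condition \eqref{eq:reg} is inherited from that of $h^\Gamma\times h^\Gamma$ on $\Gamma\times\Gamma$. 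Once this is handled, the whole argument is essentially a bookkeeping translation through Theorem~\ref{thm:vertical.lift} and Lemma~\ref{lem:H_0.gpd}, with no genuinely new analytic input required.
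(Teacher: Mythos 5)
Your proposal is correct and follows essentially the same route as the paper's own proof: both directions hinge on Theorem~\ref{thm:vertical.lift} (equivariant maps $=$ vector-bundle maps, invariant submanifolds $=$ vector subbundles) together with Lemma~\ref{lem:H_0.gpd} to produce the fixed-point base groupoid, after which multiplicativity of $h$ translates directly into the structural maps being vector-bundle maps. Your explicit treatment of the multiplication map via the invariance and regularity of the restricted action on $\Gamma\times_E\Gamma$ merely spells out a point the paper leaves implicit.
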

\begin{proof}
Clearly every VB-groupoid has an underlying multiplicative
$\Rm$-action that is regular. Conversely, if we start with a regular
multiplicative action $h:\Rm\action (\Gamma\toto E)$, and we write
$G=h^\Gamma_0(\Gamma)$ and $M=h^E_0(E)$, then by
Thm.~\ref{thm:vertical.lift} and Lemma~\ref{lem:H_0.gpd} we obtain a
diagram of Lie groupoids and vector bundles as in \eqref{eq:VBgrd}.
It only remains to check the compatibility between these structures.
The fact that $h_\lambda$ is a groupoid map for every $\lambda$
implies that the structural maps of the groupoid structure are
equivariant, hence maps of vector bundles, and the result follows.
\end{proof}

% Corollaries

Using this characterization we see that a Lie subgroupoid of a
VB-groupoid that is invariant for the $\Rm$-action is also a
VB-groupoid, and that a VB-groupoid map is the same as a groupoid
map between the total Lie groupoids that is $\Rm$-equivariant. It is
also clear that the direct product of VB-groupoids is a VB-groupoid:
for VB-groupoids $\Gamma_i\toto E_i$ over $G_i\toto M_i$, $i=1,2$,
the direct product of the $\Rm$-actions on the groupoid
$\Gamma_1\times \Gamma_2 \toto E_1\times E_2$ makes it into a
VB-groupoid over $G_1\times G_2\toto M_1\times M_2$.

Next we list a few other useful consequences of
Thm.~\ref{thm:VB-gpd}.

\begin{corollary}\label{cor:VB-gpd}
\
\begin{enumerate}[(a)]
% \item The direct product of VB-groupoids is a VB-groupoid. \comment{out?}
 \item Let $\Gamma_1$, $\Gamma_2$ and $\Gamma$ be VB-groupoids, and let
$\Phi_i: \Gamma_i\to \Gamma$, $i=1,2$, be VB-groupoid maps forming a
good pair
 (cf. Lemma~\ref{lemma:good.pairs.groupoids}). Then their fibred product is a VB-groupoid.

 \item Given a VB-groupoid $\Gamma\toto E$ over $G\toto M$ and a Lie-groupoid map
 $(\Phi,\phi): (\tilde{G}\toto \tilde{M})\to (G\toto M)$, the fibred
 product
$$
\xymatrix{(\Phi^*\Gamma\toto \phi^*E) \ar[d]\ar[r] & (\Gamma\toto E) \ar[d]^{}\\
(\tilde{G}\toto \tilde{M}) \ar[r]^{(\Phi,\phi)}&(G\toto M)}
$$
endows the pullback vector bundles $\Phi^*\Gamma$ and $\phi^*E$ with
a VB-groupoid structure over $\tilde{G}\toto  \tilde{M}$.
\end{enumerate}
\end{corollary}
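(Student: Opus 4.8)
The plan is to derive both parts from Theorem~\ref{thm:VB-gpd}, using the two consequences recorded right after its proof: that an invariant Lie subgroupoid of a VB-groupoid is again a VB-groupoid, and that the direct product of VB-groupoids is a VB-groupoid. Throughout, I would encode a VB-groupoid by its regular multiplicative action, and a VB-groupoid map by an $\Rm$-equivariant groupoid map.

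For part (a), let $h_1,h_2,h$ denote the regular multiplicative actions on $\Gamma_1,\Gamma_2,\Gamma$. First I would form the direct product VB-groupoid $\Gamma_1\times\Gamma_2$, carrying the product action $h_1\times h_2$. By the good-pair hypothesis and the fibred-product construction of the appendix, the set
$$\Gamma_1\times_\Gamma\Gamma_2=\{(\gamma_1,\gamma_2):\Phi_1(\gamma_1)=\Phi_2(\gamma_2)\}$$
is an embedded Lie subgroupoid of $\Gamma_1\times\Gamma_2$. Equivariance of $\Phi_1,\Phi_2$ makes this subset invariant under the product action: if $\Phi_1(\gamma_1)=\Phi_2(\gamma_2)$ then $\Phi_1((h_1)_\lambda\gamma_1)=h_\lambda\Phi_1(\gamma_1)=h_\lambda\Phi_2(\gamma_2)=\Phi_2((h_2)_\lambda\gamma_2)$. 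Being an invariant Lie subgroupoid of a VB-groupoid, it is itself a VB-groupoid, which is the claim.

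For part (b), I would exhibit it as a special case of (a). Regard $G\toto M$ as a VB-groupoid over itself via the trivial action $h_\lambda=\id$ (this action is regular, corresponding to the zero vector bundle $G\to G$), and likewise for $\tilde G\toto\tilde M$. The vector-bundle projection $q\colon\Gamma\to G$ coincides with $h^\Gamma_0$, hence satisfies $q\circ h_\lambda=q$ and is a VB-groupoid map for the trivial action on $G$; similarly $\Phi\colon\tilde G\to G$ is a VB-groupoid map. After checking that $(q,\Phi)$ is a good pair---immediate since $q$, as a vector-bundle projection, is a surjective submersion---part (a) applies and produces a VB-groupoid structure on $\Gamma\times_G\tilde G=\Phi^*\Gamma$. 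Its fixed-point subgroupoid is $\{(g,\tilde g):g=\Phi(\tilde g)\}\cong\tilde G\toto\tilde M$, so the result is a VB-groupoid over $\tilde G\toto\tilde M$ whose arrow and side bundles are the pullbacks $\Phi^*\Gamma$ and $\phi^*E$ by construction.

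The genuinely new content is small, given Theorem~\ref{thm:VB-gpd}: the main point to verify carefully is that the fibred product formed in the category of Lie groupoids (via the good-pairs lemma) coincides, as a smooth manifold, with the invariant subspace of the product on which the restricted action is regular, so that ``invariant Lie subgroupoid'' genuinely applies. The identification of bases and cores, and the verification of the good-pair hypothesis in (b), are then routine bookkeeping with the appendix constructions; this last point is the only step I expect to present any real friction.
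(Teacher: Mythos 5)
Your proposal is correct and follows essentially the same route as the paper: for (a), form the fibred product as a Lie subgroupoid of the direct product via the appendix result (Prop.~\ref{prop:fp.gpd}), observe it is $\Rm$-invariant because the $\Phi_i$ are equivariant, and invoke the consequence of Theorem~\ref{thm:VB-gpd} that invariant Lie subgroupoids of VB-groupoids are VB-groupoids; for (b), reduce to (a) by viewing $G$ and $\tilde G$ as VB-groupoids with zero fibers and using that the projection $\Gamma\to G$ is a submersion, hence transverse to $\Phi$, to get the good-pair hypothesis. The only cosmetic difference is that you verify the $\Rm$-invariance and the identification of the base explicitly, which the paper leaves implicit.
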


\begin{proof}
%For VB-groupoids $\Gamma_i\toto E_i$ over $G_i\toto M_i$, $i=1,2$,
%the direct product of the $\Rm$-actions on the groupoid
%$\Gamma_1\times \Gamma_2 \toto E_1\times E_2$ makes it into a
%VB-groupoid over $G_1\times G_2\toto M_1\times M_2$, proving (a).

%Regarding (b),

We know by Prop.~\ref{prop:fp.gpd} that the fibred product of
$\Phi_i: \Gamma_i\to \Gamma$, $i=1,2$, is a Lie subgroupoid
$\Gamma_1\times_\Gamma \Gamma_2\toto E_1\times_E E_2$ of
$\Gamma_1\times \Gamma_2 \toto E_1\times E_2$. The fact that the
maps $\Phi_i$ are vector-bundle morphisms implies that this Lie
subgroupoid is $\Rm$-invariant with respect to the direct-product
action, so it inherits a VB-groupoid structure. It is also evident
that the projections $\Gamma_1\times_\Gamma \Gamma_2 \to \Gamma_i$,
$i=1,2$, are VB-groupoid maps.

Finally, $(b)$ follows from $(a)$, since both
\smash{$\tilde{G}\xto\Phi G$} and the projection $\Gamma\to G$ are
VB-groupoid maps with respect to the VB-groupoids over $G$ and
$\tilde{G}$ with zero fibers, and the projection is a submersion,
hence transverse to any map, thus ensuring the good pair property.
\end{proof}

%We close this section discussing some key constructions involving vector bundles that naturally extend to the category of VB-groupoids.

% kernels

\begin{remark}\label{rmk:kergrp}
For a VB-groupoid map $(\Phi,\phi): (\Gamma\toto E) \to
(\tilde{\Gamma}\toto \tilde{E})$, assume that the underlying
vector-bundle map $\Phi: (\Gamma\to G) \to (\tilde{\Gamma}\to
\tilde{G})$ has constant rank. Then  $\phi: (E\to M)\to
(\tilde{E}\to \tilde{M})$ also has constant rank, and one can
directly check that $\ker(\Phi)$ and $\ker(\phi)$ define a
VB-groupoid $\ker(\Phi) \toto \ker(\phi)$ over $G\toto M$, cf.
\cite[App. C]{lm}. (Note that this VB-groupoid may be seen,
following Cor.~\ref{cor:VB-gpd}(a), as the fibred product of the
good pair given by $\Phi$ and the zero section $\tilde{G}\to
\tilde{\Gamma}$.)
\end{remark}

% categorical

\begin{remark}
Just as the usual category of vector bundles over a manifold, the
category $\mathrm{VB}(G\toto M)$ of VB-groupoids over $G\toto M$ is
an additive category (it is equipped with linear structures on
hom-sets, zero objects and direct sums). Moreover, we can take
kernels and cokernels of VB-groupoid maps that have constant rank
and consider short exact sequences of VB-groupoids. (The
construction of kernels is discussed in the previous remark, and for
cokernels one may compute the kernels of the dual maps, and dualize
again.)
\end{remark}

\begin{remark}\label{rem:pbgrp}
Given a Lie-groupoid map $\Phi:(\tilde{G}\toto \tilde{M})\to(G\toto
M)$, the pullback construction from Cor.~\ref{cor:VB-gpd}(b) defines
a {\em base-change} functor
\begin{equation}\label{eq:pbgp}
\mathrm{VB}(G\toto M)\xto{\Phi^*}\mathrm{VB}(\tilde{G}\toto
\tilde{M}),
\end{equation}
which preserves short exact sequences and duals (i.e., the natural
vector-bundle identifications $(\Phi^* \Gamma)^* \cong \Phi^*
\Gamma^*$ are isomorphisms of VB-groupoids). Moreover, the property
that any vector-bundle map $\Psi: (\Gamma\to G) \to
(\tilde{\Gamma}\to \tilde{G})$ induces a vector-bundle map
$\Gamma\to (\Psi|_G)^* \tilde{\Gamma}$ over the identity $G\to G$
also extends: when $\Gamma$ and $\tilde{\Gamma}$ are VB-groupoids
and $\Psi$ is a VB-groupoid map, so is this induced map.
\end{remark}

%%%%%%%%%%%%%%%%%%%%%%%%%%

\subsection{VB-algebroids}\label{sec:IMact}

% definition

A {\em VB-algebroid} \cite{mac-double2,GM}  consists of a double
vector bundle $(\Omega,E,A,M)$ equipped with a Lie-algebroid
structure $\Omega\then E$ that is compatible with the second
vector-bundle structure on $\Omega$ in the following sense: we
require the linear Poisson structure on $\Omega^*_E$, corresponding
to $\Omega\then E$, to be also linear with respect to the fibration
$\Omega^*_E\to C^*$ (see \cite[Sec.~3.4]{GM}); in other words, it is
a double linear Poisson structure on the horizontal dual,
\begin{equation}\label{eq:VBalgdual}
\begin{matrix}
\Omega_E^* & \to & E \\ \downarrow & & \downarrow\\ C^* & \to & M.
\end{matrix}
\end{equation}

One can check that a double linear Poisson structure on $\Omega^*_E$
induces a unique Lie-algebroid structure on $A\to M$ so that the
projection $q^\Omega_A$ (and also the zero section $0^\Omega_A$) is
a Lie-algebroid map, see \cite[Sec.~3]{GM} (cf.
Lemma~\ref{lem:H0_alg} and Thm.~\ref{thm:VB-alg} below). For this
reason, we depict a VB-algebroid as follows:
\begin{equation}\label{eq:VBalg}
\begin{matrix}
\Omega & \then & E\\
\downarrow & & \downarrow\\
A& \then & M.
\end{matrix}
\end{equation}
We refer to $\Omega \then E$ as the {\em total algebroid}, while
$A\then M$ is the {\em base algebroid}, and we say that $\Omega
\then E$  is a VB-algebroid over $A\then M$.  The {\em core} of a
VB-algebroid is that of the underlying double vector bundle.

Other formulations of VB-algebroids and their equivalences can be
found in \cite{GM}.

% maps

A {\em VB-algebroid map} is a Lie-algebroid morphism
$\Phi:(\Omega\then E)\to(\tilde{\Omega}\then \tilde{E})$ that is
also linear, i.e., it defines a vector-bundle map $(\Omega\to A)\to
(\tilde{\Omega}\to \tilde{A})$. As a consequence, one can check that
$\Phi$ restricts to a Lie-algebroid map $ (A\then M)\to
(\tilde{A}\then \tilde{M})$. As in the case of VB-groupoids, a
VB-algebroid map determines four smooth maps relating each of the
involved manifolds and preserving all structures. We denote by
$\mathrm{VB}(A\then M)$ the category of VB-algebroids over $A\then
M$, with VB-groupoid maps covering the identity as morphisms.

% examples

\begin{example}[Tangent algebroid]\label{ex:tgVB}
If $A\then M$ is a Lie algebroid, then it induces a Lie-algebroid
structure on $TA\to TM$, often referred to as the {\em tangent
prolongation} of $A$; in this example, the anchor and the bracket
between tangent sections are obtained by differentiating the
corresponding structures in $A$, see \cite[\S 9.7]{Mac-book}. The
Lie algebroid $TA\then TM$ is a VB-algebroid over $A\then M$.
\end{example}

\begin{example}[Cotangent algebroid]\label{ex:ctgVB}
For a Lie algebroid $A\then M$, the corresponding linear Poisson
structure on $A^*$ induces a Lie-algebroid structure $T^*A^* \then
A^*$, which is a VB-algebroid over $A\then M$ (c.f.
Remark~\ref{rem:poisson}). Using the reversal isomorphism
\eqref{eq:R}, we then obtain a VB-algebroid $T^*A\then A^*$ over
$A\then M$.
\end{example}

% representations

\begin{example}[Representations]\label{ex:reps}
Any representation of a Lie algebroid $A\then M$ gives rise to a
VB-algebroid over $A$, for which the ranks of $\Omega\to A$ and
$E\to M$ are equal; in fact there is a one-to-one correspondence
between representations of $A$ and VB-algebroids with this
additional property. Just as  VB-groupoids (c.f.
Remark~\ref{rmk:ruth}), VB-algebroids are thought of as generalized
representations, since they are similarly related to 2-term
representations up to homotopy, see \cite{GM}.
\end{example}

% duality of VB-algebroids

Given a VB-algebroid \eqref{eq:VBalg}, while its horizontal dual
\eqref{eq:VBalgdual} carries a double linear Poisson structure, its
vertical dual is again a VB-algebroid \cite[Sec.~3.4]{GM}:
\begin{equation}\label{eq:dualalg}
\begin{matrix}
\Omega^*_A & \then & C^*\\
\downarrow & & \downarrow\\
A& \then & M.
\end{matrix}
\end{equation}
Indeed, using the linearity of the Poisson structure relative to
$\Omega_E^*\to C^*$, we see that $(\Omega_E^*)^*_{C^*} \then C^*$ is a
VB-algebroid over $A\then M$. The VB-algebroid \eqref{eq:dualalg} is
defined by means of the isomorphism $Z$ in \eqref{eq:Z}, and we
refer to it as the {\em dual VB-algebroid} of $\Omega\then E$. As in
the case of VB-groupoids, duality defines an involution in the
category $\mathrm{VB}(A\then M)$.

\begin{example}
The tangent and cotangent Lie algebroids, see Examples~\ref{ex:tgVB}
and \ref{ex:ctgVB}, are dual VB-algebroids in the sense just
described. In other words, $Z: (T^*A \then A^*) \to
((TA)^*_{TM})^*_{A^*}\then A^*)$ is a Lie-algebroid map. This can be
verified by noticing that the composition $R\circ Z^{-1}$, where $R$
is the reversal isomorphism (see \cite[Prop.~9.3.2 \& \S
9.5]{Mac-book}), is a Lie-algebroid map (by
\cite[Thm.~10.3.14]{Mac-book}); since $R$ is a Lie-algebroid map by
definition, so is $Z$.
%This can be verified by noticing that the reversal isomorphism,
%which is an algebroid map by definition (cf.
%Example~\ref{ex:ctgVB}), can be written as the composition
%$$
%(T^*A \then A^*) \xto{Z} ((TA)^*_{TM})^*_{A^*}\then A^*) \xto{I^*}
%(T^*A^*\then A^*),
%$$
%where $I^*$ denotes the dual of the {\em internalization map}, which
%is an algebroid map by \cite[Thm.~10.3.14]{Mac-book}. It follows
%that $Z$ is also an algebroid map, which establishes the result.
%\comment{keep explanations?}
\end{example}

% VB-cube
% moved...

%%%%%%%%%%%%%%%%%%%%%%%

\subsection{Characterization as IM actions}

% IM $\Rm$-actions

We now discuss the Lie-algebroid version of Theorem
\ref{thm:VB-gpd}. We start with the infinitesimal counterpart of
multiplicative actions:

\begin{definition}
An $\Rm$-action $h$ on the total space of a Lie algebroid
$\Omega\then E$ is called {\em infinitesimally multiplicative} (or
simply {\em IM}) if each $h_\lambda : \Omega\to \Omega$ defines a
Lie-algebroid map.
\end{definition}

As in the case of VB-groupoids, the restriction of $h$ to $E$ is an
$\Rm$-action, which is regular if $h$ is, and we can think of $h$ as
a pair of actions $(h^\Omega,h^E)$, on $\Omega$ and $E$.

% Fixed points

We now consider fixed points of IM-actions. The discussion parallels
Lemma~\ref{lem:H_0.gpd}, though its direct proof cannot be adapted
to Lie algebroids. So we provide an alternative argument using
fibred products, which works in both contexts.

\begin{lemma}\label{lem:H0_alg}
If $h=(h^\Omega,h^E)$ is an IM-action on $\Omega\then E$, then its
fixed points define an embedded Lie subalgebroid
$h_0^\Omega(\Omega)\then h^E_0(E)$ of $\Omega\then E$.
\end{lemma}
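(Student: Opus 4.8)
The plan is to realize the fixed-point set as an equalizer of Lie-algebroid maps and then present that equalizer as a fibred product, so that the fibred-product machinery for Lie algebroids collected in the appendix produces the desired subalgebroid structure. The same argument runs verbatim for groupoids (using Prop.~\ref{prop:fp.gpd}), giving a uniform proof that covers Lemma~\ref{lem:H_0.gpd} as well; this is the sense in which it ``works in both contexts''.

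First I would observe that, applying the IM hypothesis at $\lambda=0$, the map $h_0^\Omega:\Omega\to\Omega$ is a Lie-algebroid map covering $h_0^E:E\to E$. Since $h_0^\Omega$ is idempotent, its image coincides with its set of fixed points, so $h_0^\Omega(\Omega)$ is the equalizer of the two Lie-algebroid maps $h_0^\Omega,\id_\Omega:\Omega\to\Omega$. Equipping $\Omega\times\Omega\then E\times E$ with the product Lie-algebroid structure, the diagonal $\Delta_\Omega:\Omega\to\Omega\times\Omega$ and the map $\Psi=(\id_\Omega,h_0^\Omega):\Omega\to\Omega\times\Omega$ are both Lie-algebroid maps, and a short computation identifies the fibred product $\Omega\times_{\Omega\times\Omega}\Omega$ taken along $\Delta_\Omega$ and $\Psi$ with this equalizer, embedded diagonally as $h_0^\Omega(\Omega)\then h_0^E(E)$. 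At the level of bases the corresponding fibred product is the equalizer of $h_0^E,\id_E$, which is exactly $h_0^E(E)$.

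I would then invoke the fibred-product result for Lie algebroids from the appendix: once $(\Delta_\Omega,\Psi)$ is a good pair, their fibred product is an embedded Lie subalgebroid of $\Omega\times\Omega$, and projecting to either factor exhibits $h_0^\Omega(\Omega)\then h_0^E(E)$ as an embedded Lie subalgebroid of $\Omega\then E$. That the underlying manifolds are embedded submanifolds of the expected dimension is already guaranteed, independently of the algebroid structure, by the preliminary discussion of idempotent actions preceding Theorem~\ref{thm:vertical.lift} (they are the images of $h_0^\Omega$ and $h_0^E$).

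The main obstacle is the verification of the good-pair hypothesis. In contrast to the transverse situations exploited elsewhere (e.g.\ Cor.~\ref{cor:VB-gpd}, where one leg is a submersion), the maps $\Delta_\Omega$ and $\Psi$ are \emph{not} transverse: at a fixed point the derivative $\dd h_0^\Omega$ is a fibrewise projection, so the tangent spaces to $\Delta_\Omega(\Omega)$ and $\Psi(\Omega)$ intersect in a subspace of positive codimension. What rescues the argument is that this idempotency makes the intersection \emph{clean}, of precisely the expected dimension computed in the previous paragraph, which is exactly the hypothesis under which the appendix guarantees the fibred product. Concretely, cleanness unpacks to the two statements that $\rho_\Omega$ maps $h_0^\Omega(\Omega)$ into $T(h_0^E(E))$ and that $h_0^\Omega(\Omega)$ is closed under the bracket; the former is immediate from $h_0^\Omega$ being a Lie-algebroid map, since $\rho_\Omega\circ h_0^\Omega=\dd h_0^E\circ\rho_\Omega$ forces $\rho_\Omega(\xi)$ to be $\dd h_0^E$-fixed for every fixed $\xi$, and the latter is precisely what the fibred-product construction packages. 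I expect this cleanness check, together with pinning down the exact form of the good-pair criterion in the appendix, to be where the real work lies.
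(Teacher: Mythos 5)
Your overall strategy is exactly the paper's: realize the fixed-point set as the fibred product of the diagonal $\Delta:\Omega\to\Omega\times\Omega$ with $\Psi=(\id_\Omega,h_0^\Omega)$ (this is the paper's diagram \eqref{eq:fixed}, up to swapping factors), check that this pair is good, and invoke Prop.~\ref{prop:fp.alg}. The gap is in the step you yourself flag as ``where the real work lies'': your sketch of the good-pair verification is wrong in two ways. First, the good-pair condition (equivalently, cleanness of the intersection of the diagonal with the graph of $h_0^\Omega$) is a purely differential-topological condition: the set-theoretic fibred product must be an embedded submanifold whose tangent space at each point equals the kernel of $\dd\Delta-\dd\Psi$, i.e.\ exactness of \eqref{eq:exact}. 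It does \emph{not} ``unpack'' into the statements that $\rho_\Omega$ preserves the fixed loci and that $h_0^\Omega(\Omega)$ is closed under the bracket; those are conditions $(i)$ and $(ii)$ of the Appendix characterizing Lie subalgebroids, and they are the \emph{conclusions} that Prop.~\ref{prop:fp.alg} extracts once the pair is known to be good, not the content of its hypothesis. Second, and as a consequence, your appeal to ``what the fibred-product construction packages'' to supply bracket closure is circular: you would be using the output of Prop.~\ref{prop:fp.alg} to certify its own hypothesis.

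The verification you are missing is short and uses only the idempotency already in hand. At a fixed point $x$, the preliminaries on $\Rm$-actions (via \cite[Thm~1.13]{nodg}) give $T_x\bigl(h_0^\Omega(\Omega)\bigr)=\dd_xh_0^\Omega(T_x\Omega)$; since $\dd_xh_0^\Omega$ is idempotent, this image coincides with its fixed-vector set $\{v:\dd_xh_0^\Omega(v)=v\}$. On the other hand, the kernel of $\dd\Delta-\dd\Psi$ at $(x,x)$ is $\{(v,v):\dd_xh_0^\Omega(v)=v\}$, the diagonal copy of that same space, so \eqref{eq:exact} is exact; embeddedness of the fibred product is just the embeddedness of $h_0^\Omega(\Omega)$ that you already noted. (Alternatively, one can check the two conditions of Lemma~\ref{lemma:good.pairs.alg}: the base pair is good by the identical argument applied to $h_0^E$, and the relevant vector-bundle map has constant rank by the paper's standing constant-rank assumption on $h_0$.) With this replacement, your argument goes through and coincides with the paper's proof; your anchor computation $\rho_\Omega\circ h_0^\Omega=\dd h_0^E\circ\rho_\Omega$ is correct but is not needed at this stage, since condition $(i)$ comes out of Prop.~\ref{prop:fp.alg} for free.
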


\begin{proof}
Write $A= h^\Omega_0(\Omega)$ and $M=h^E_0(E)$. Since $h^\Omega$ and
$h^E$ are $\Rm$-actions,  $A\subset \Omega$ and $M\subset E$ are
embedded submanifolds. One then verifies that the following pair of
Lie-algebroid maps is good in the sense of the Appendix (see
Lemma~\ref{lemma:good.pairs.alg}):
\begin{equation}\label{eq:fixed}
\begin{matrix}
\xymatrix{
& (\Omega\then E) \ar[d]^{(h^\Omega_0,h^E_0)\times (\id_\Omega,\id_E)} \\
(\Omega\then E)  \ar[r]_(.4){\Delta} & (\Omega\then
E)\times(\Omega\then E),}
\end{matrix}
\end{equation}
where $\Delta$ is the diagonal map. It follows from
Prop.~\ref{prop:fp.alg} that their algebroid-theoretic fibred
product is well-defined. The natural identification of this fibred
product with $A\to M$ makes it into a Lie subalgebroid of
$\Omega\then E$.
\end{proof}

% characterization of VB-algebroids

The following result characterizes VB-algebroids in terms of IM-actions:

\begin{theorem}
\label{thm:VB-alg} There is a one-to-one correspondence between
VB-algebroids with total Lie algebroid $\Omega \then E$ and regular
IM actions $\Rm\action(\Omega \then E)$.
\end{theorem}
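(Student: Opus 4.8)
The plan is to establish the correspondence by showing each direction. First I would verify that a VB-algebroid gives a regular IM action; this is essentially the content of the definition read through the $\Rm$-action lens. Given a VB-algebroid $(\Omega, E, A, M)$ as in \eqref{eq:VBalg}, the second vector-bundle structure $\Omega \to A$ corresponds, by Thm.~\ref{thm:vertical.lift}, to a regular $\Rm$-action $h = (h^\Omega, h^E)$. The compatibility condition of the VB-algebroid is a statement about double linearity of the Poisson structure on the horizontal dual $\Omega^*_E$; I would translate this, via Prop.~\ref{linear} and Prop.~\ref{tildeh}, into the statement that each $h_\lambda$ is a Lie-algebroid map, i.e.\ that the action is IM. The key point is that double linearity of the Poisson structure on $\Omega^*_E$ is equivalent (by duality between Lie algebroids and linear Poisson structures) to each homothety $h^\Omega_\lambda$ being a morphism of the Lie algebroid $\Omega \then E$.

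For the converse, suppose $h = (h^\Omega, h^E)$ is a regular IM action on $\Omega \then E$. By Thm.~\ref{thm:vertical.lift} the action $h$ endows $\Omega \to h^\Omega_0(\Omega)$ with a vector-bundle structure; writing $A = h^\Omega_0(\Omega)$ and $M = h^E_0(E)$, Lemma~\ref{lem:H0_alg} shows that $A \then M$ is an embedded Lie subalgebroid, so $A$ carries a Lie-algebroid structure. Together with the given $\Omega \then E$, this produces a diagram of the shape \eqref{eq:VBalg}, with $\Omega$ carrying two commuting vector-bundle structures (the algebroid fibration $\Omega \to E$ and the homothety fibration $\Omega \to A$); one must check these commute, which follows since the IM property forces the zero section and projection of $\Omega \to A$ to be algebroid maps, hence equivariant for the bundle structure over $E$. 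Thus $(\Omega, E, A, M)$ is a double vector bundle by Prop.~\ref{prop:DVB}.

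It then remains to verify the compatibility condition, i.e.\ that the induced linear Poisson structure on the horizontal dual $\Omega^*_E$ is double linear. This is exactly the reverse of the first direction: the IM property says each $h^\Omega_\lambda$ is an algebroid map, which dually means the Poisson structure on $\Omega^*_E$ is preserved (up to the scaling $\pi \mapsto \lambda\pi$) by the dual action, and by Prop.~\ref{linear} this is precisely linearity with respect to the second fibration of $\Omega^*_E$. Hence $\Omega \then E$ is a VB-algebroid over $A \then M$. Finally I would note that the two constructions are mutually inverse, since both rest on the bijection of Thm.~\ref{thm:vertical.lift} between regular actions and vector bundles.

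I expect the main obstacle to be the careful bookkeeping of the duality dictionary: one must correctly match the IM condition on $h^\Omega$ (a condition about morphisms of $\Omega \then E$) with double linearity of the Poisson structure on the horizontal dual $\Omega^*_E$, keeping track of which homotheties and phase lifts (as in \eqref{eq:phase} and Prop.~\ref{tildeh}) act on which dual bundle. The translation between ``$h_\lambda$ is an algebroid map'' and ``the dual relation $h_\lambda^*$ is coisotropic / the dual action scales the Poisson tensor by $\lambda$'' is where the argument is most delicate, and where one genuinely uses the structural fact that Lie algebroids are characterized by linear Poisson structures on their duals. The groupoid case (Thm.~\ref{thm:VB-gpd}) was direct because groupoid multiplication is an honest map; here the algebroid bracket is not, so the proof must proceed through the Poisson-geometric dual, which is the conceptual heart of the statement.
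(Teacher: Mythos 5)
Your proposal is correct and follows essentially the same route as the paper's proof: both reduce the double-vector-bundle structure to Prop.~\ref{prop:DVB} (a regular action by vector-bundle maps of $\Omega\to E$ gives the DVB), and both match the IM condition with the VB-algebroid compatibility by passing to the Poisson dual $\Omega^*_E$ via Prop.~\ref{linear} and Prop.~\ref{tildeh}. The paper packages the delicate bookkeeping you anticipated into a single commutative triangle, $\bar{h}_\lambda = (h_{\lambda^{-1}})^*\circ \bar{k}_\lambda$, in which $\bar{k}_\lambda$ is automatically Poisson, so that linearity of $\pi$ over $C^*$ (the VB-algebroid condition) holds if and only if $(h_{\lambda^{-1}})^*$ is a Poisson automorphism, i.e.\ if and only if the action is IM.
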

\begin{proof}
We know that a regular action $h$ on $\Omega\to E$ by vector-bundle
maps is the same as a double vector bundle structure
$(\Omega,E,A,M)$, where $A=h_0^\Omega(\Omega)$ and $M=h_0^E(E)$ (cf.
Prop.~\ref{prop:DVB}). So what we need to show is that $h$ acts by
Lie-algebroid maps if and only if the compatibility condition
defining VB-algebroids is fulfilled.

Let $k$ be the action corresponding to $\Omega\to E$, and let
$\bar{h}, \bar{k}$  be the $\Rm$-actions associated to
$\Omega^*_E\to C^*$ and $\Omega^*_E\to E$, respectively. For each
$\lambda\neq 0$, by Prop.~\ref{tildeh} we know that $\bar{h}_\lambda
= (h_{\lambda^{-1}})^*\circ \bar{k}_\lambda$. Letting $\pi$ denote
the Poisson bivector field on $\Omega_E^*$ corresponding to the
Lie-algebroid structure $\Omega\then E$, we consider the following
diagram:
$$
\xymatrix{ (\Omega^*_E,\pi) \ar[rd]_{\bar{h}_\lambda}
\ar[rr]^{\bar{k}_\lambda}& &
(\Omega^*_E,\lambda\pi) \ar[dl]^{h_{\lambda^{-1}}^* }\\
& (\Omega^*_E,\lambda\pi).}
$$
First, we notice that it is commutative as a diagram of smooth maps.
In addition, the map $\bar{k}_\lambda$ is Poisson as a result of the
linearity of $\pi$ over $E$ (see Prop.~\ref{linear}). It then
follows that the left arrow is a Poisson map if and only if so is
the right arrow.

Note that
$\bar{h}_\lambda:(\Omega^*_E,\pi)\to(\Omega^*_E,\lambda\pi)$ is a
Poisson map for every $\lambda\neq 0$ if and only if the Poisson
structure $\pi$ on $\Omega^*_E$ is linear with respect to
$\Omega_E^*\to C^*$ (cf. Prop.~\ref{linear}), and this is by
definition the same as $\Omega\then E$ being a VB-algebroid. On the
other hand, the map
$h_{\lambda^{-1}}^*:(\Omega^*_E,\lambda\pi)\to(\Omega^*_E,\lambda\pi)$
is Poisson if and only if $h_{{\lambda}^{-1}}^*$ is a Poisson
automorphism of $(\Omega_E^*,\pi)$, or equivalently,
$h_{{\lambda}^{-1}}$ is an algebroid map. Note that if this holds
for every $\lambda\neq 0$ then it is also true for $\lambda=0$, by
passing to the limit.
\end{proof}

As previously mentioned, on a VB-algebroid the base Lie algebroid is
determined by the total algebroid. Note that Lemma~\ref{lem:H0_alg}
explains this fact from the point of view of $\Rm$-actions.

% Corollaries

Just as in the discussion for VB-groupoids, we conclude that a Lie
subalgebroid of a VB-algebroid that is invariant under the
$\Rm$-action is a VB-algebroid itself. It is also clear that a
VB-algebroid map is a Lie-algebroid map between the total Lie
algebroids that is $\Rm$-equivariant, and that the direct product of
VB-algebroids is a VB-algebroid. In the remainder of this section we
collect other consequences of Thm.~\ref{thm:VB-alg}, which are
parallel to those for groupoids presented after
Thm.~\ref{thm:VB-gpd}.

\begin{corollary}\
\label{cor:VB-alg}
\begin{enumerate}[(a)]
% \item The direct product of VB-algebroids is a VB-algebroid.
\item Given VB-algebroids $\Omega_1$, $\Omega_2$ and $\Omega$ and
VB-algebroid maps $\Omega_i\to \Omega$, $i=1,2$, if the maps form a
good pair (cf. Lemma~\ref{lemma:good.pairs.alg}), then their fibred
product is a VB-algebroid.
\item
Given a VB-algebroid $\Omega\then E$ over $A\then M$ and a
Lie-algebroid map $(\Phi,\phi): (\tilde{A}\then \tilde{M})\to
(A\then M)$, the fibred product
$$
\xymatrix{(\Phi^*\Omega\then \phi^*E) \ar[d]\ar[r] & (\Omega\then E) \ar[d]^{}\\
(\tilde{A}\then \tilde{M}) \ar[r]^{(\Phi,\phi)}&(A\then M)}
$$
endows the pullback vector bundles $\Phi^*\Omega$ and $\phi^*E$ with
a VB-algebroid structure over $\tilde{A}\then \tilde{M}$.

\end{enumerate}
\end{corollary}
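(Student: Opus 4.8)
The plan is to transfer the proof of the groupoid statement, Cor.~\ref{cor:VB-gpd}, to the algebroid setting, replacing the role of its groupoid fibred-product input with the algebroid counterpart Prop.~\ref{prop:fp.alg}, and using the characterization of VB-algebroids as regular IM actions from Thm.~\ref{thm:VB-alg}. The guiding principle, recorded right after that theorem, is twofold: a Lie subalgebroid of a VB-algebroid that is invariant under the $\Rm$-action is itself a VB-algebroid, and a VB-algebroid map is precisely an $\Rm$-equivariant Lie-algebroid map between the total algebroids. With these in hand, both statements reduce to invariance checks once the underlying algebroid fibred products are known to exist.

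For part (a), I would present $\Omega_1$, $\Omega_2$, $\Omega$ through their regular IM actions $h^1$, $h^2$, $h$ and equip $\Omega_1\times\Omega_2$ with the product action, making it a VB-algebroid with total algebroid $\Omega_1\times\Omega_2\then E_1\times E_2$. Since the pair $\Phi_1,\Phi_2$ is good, Prop.~\ref{prop:fp.alg} yields the fibred product as a Lie subalgebroid $\Omega_1\times_\Omega\Omega_2\then E_1\times_E E_2$ of $\Omega_1\times\Omega_2$. The main point is then to observe that this subalgebroid is $\Rm$-invariant: because each $\Phi_i$ is a VB-algebroid map it intertwines $h^i$ with $h$, so if $\Phi_1(\omega_1)=\Phi_2(\omega_2)$ then $\Phi_1(h^1_\lambda\omega_1)=h_\lambda\Phi_1(\omega_1)=h_\lambda\Phi_2(\omega_2)=\Phi_2(h^2_\lambda\omega_2)$, whence $(h^1_\lambda\omega_1,h^2_\lambda\omega_2)$ again lies in the fibred product. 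Invariance supplies the VB-algebroid structure, and the projections $\Omega_1\times_\Omega\Omega_2\to\Omega_i$, being restrictions of the equivariant projections of the product, are VB-algebroid maps.

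For part (b), I would deduce it from (a) exactly as in the groupoid case. Regard the base algebroids $A\then M$ and $\tilde{A}\then\tilde{M}$ as VB-algebroids over themselves with zero fibers; then $\Phi:(\tilde{A}\then\tilde{M})\to(A\then M)$ and the bundle projection $\Omega\to A$ are both VB-algebroid maps into the zero VB-algebroid over $A$. The projection $\Omega\to A$ is a surjective submersion, hence transverse to $\Phi$, which ensures that this is a good pair in the sense of Lemma~\ref{lemma:good.pairs.alg}; applying (a) produces a VB-algebroid structure on the fibred product, whose total and side bundles are canonically identified with the pullbacks $\Phi^*\Omega$ and $\phi^*E$ over $\tilde{A}\then\tilde{M}$.

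The step I expect to require the most care is not the $\Rm$-invariance, which is formal, but ensuring that the fibred products exist as honest Lie subalgebroids. Unlike the groupoid situation, fibred products of Lie algebroids are not automatic, which is precisely why the good-pair hypothesis appears; the substance of part (a) is therefore packaged into Prop.~\ref{prop:fp.alg} of the appendix, and for part (b) the only genuine verification is that the submersivity of $\Omega\to A$ makes the relevant pair good.
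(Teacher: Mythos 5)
Your proposal is correct and takes essentially the same route as the paper: the paper's own proof of this corollary simply states that the arguments are completely analogous to those of Corollary~\ref{cor:VB-gpd}, using Prop.~\ref{prop:fp.alg} in place of Prop.~\ref{prop:fp.gpd}, and your write-up carries out exactly that transfer (product IM action, $\Rm$-invariance of the fibred-product subalgebroid via equivariance of the $\Phi_i$, and part (b) reduced to part (a) via the zero-fiber VB-algebroids and transversality of the submersion $\Omega\to A$).
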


\begin{proof}
The proofs are completely analogous to those in
Corollary~\ref{cor:VB-gpd}, but now making use of
Prop.~\ref{prop:fp.alg} rather than Prop.~\ref{prop:fp.gpd}.
\end{proof}

% categorical

\begin{remark}\label{rmk:keralg}
The category $\mathrm{VB}(A\then M)$ is additive, and one can also
consider kernels and co-kernels: given a VB-algebroid map
$(\Phi,\phi): (\Omega\then E)\to (\tilde{\Omega}\then \tilde{E})$,
if it has constant rank as a map of vector bundles $\Phi: (\Omega\to
A)\to (\tilde{\Omega}\to \tilde{A})$ (the same automatically holds
for $\phi: (E\to M)\to (\tilde{E}\to \tilde{M})$), then its kernel
naturally inherits the structure of a VB-algebroid $\ker(\Phi)\then
\ker(\phi)$ over $A\then M$. This may not be as direct to check as
in the groupoid case, but one may use the same argument sketched in
the end of Remark~\ref{rmk:kergrp}: the map $\Phi$ and the zero
section $\tilde{A}\to \tilde{\Omega}$ form a good pair of
VB-algebroid maps (considering the zero vector bundle over
$\tilde{A}$), so by Cor.~\ref{cor:VB-alg}(a) one can realize the
kernel of $\Phi$ as the fibred-product VB-algebroid
\begin{equation}\label{eq:kerVBgr}
\begin{matrix}
\xymatrix{ (\ker(\Phi)\then\ker(\phi)) \ar[r] \ar[d] & (\Omega\then E) \ar[d]^{(\Phi,\phi)} \\
(\tilde{A}\then
\tilde{M})\ar[r]_{(0_{\tilde{\Omega}},0_{\tilde{E}})} &
(\tilde{\Omega}\then \tilde{E}).}
\end{matrix}
\end{equation}
The construction of cokernels follows from duality.
\end{remark}

\begin{remark}\label{rem:pbalg}
Through the pullback construction of Cor.~\ref{cor:VB-alg}(b), each
Lie-algebroid map $\Phi: (\tilde{A}\then \tilde{M})\to (A\then M)$
gives rise to a base-change functor
\begin{equation}\label{eq:pbalg}
\mathrm{VB}(A\then M)\xto{\Phi^*}\mathrm{VB}(\tilde{A}\then
\tilde{M})
\end{equation}
preserving short exact sequences and duals. If $\Phi: \Omega\to
\tilde{\Omega}$ is a VB-algebroid map, then the induced
vector-bundle map $\Omega \to (\Phi|_A)^*\tilde{\Omega}$, covering
the identity map on $A\to M$, is a also VB-algebroid map.
\end{remark}

%%%%%%%%%%%%%%%%%%%%%%%%%%%%%%%%%%%%%%%%%%%%%%%%%%%%%%%%%%%%%%%%%%%%%%%%%%%%%%%%%%%%%%%%%%%%%%

\section{Lie theory for vector bundles}\label{sec:Lie}

% Overview

A differentiation procedure (see e.g. \cite{Mac-book,mm}) gives rise
to a functor
\begin{equation}\label{eq:lieF}
\mathrm{Lie\ Groupoids}\xto{\mathrm{Lie}} \mathrm{Lie\ Algebroids}.
\end{equation}
It is well known that this Lie functor is not an equivalence of
categories, so a perfect translation between the global and
infinitesimal pictures is not always possible.

%that allow us to relate the global and the infinitesimal picture. Of course, the Lie functor

For a Lie groupoid $G$ we use the notation $A_G = \mathrm{Lie}(G)$
and say that $G$ \emph{integrates} $A$. It is a fact that not every
Lie algebroid comes from a Lie groupoid, see \cite{CF} for a
discussion of the integrability problem.

For a morphism $\Phi:G_1\to G_2$ we often write $\Phi' =
\mathrm{Lie}(\Phi):A_{G_1}\to A_{G_2}$ to simplify the notation.
Upon an additional topological assumption, namely if $G_1$ is a
source-simply connected, the Lie functor sets a bijection between
groupoid maps $G_1\to G_2$ and algebroid maps $A_{G_1}\to A_{G_2}$.
This is the content of {\em Lie's second theorem} (see e.g.
\cite[Sec.~6.3]{mm}), that will be used recurrently in this paper.

\begin{remark}\label{rem:sconnect}
If $G_1$ is just source-connected, we still have injectivity: if
$\Phi, \Psi: G_1 \to G_2$ are groupoid maps such that $\Phi'=\Psi'$,
then necessarily $\Phi=\Psi$.
\end{remark}

In this section we use Theorems~\ref{thm:VB-gpd} and
\ref{thm:VB-alg} to explain how VB-groupoids and VB-algebroids are
related by differentiation and integration.

%%%%%%%%%%%%%%%%%%%%%%%%%%%%%%%%%%%%%%%%%%%%%%%%%%%%%%%%%%%%%%%%%%%%%%%
\subsection{Differentiation of VB-groupoids}

% differentiation of actions

In order to relate VB-groupoids and VB-algebroids by the Lie
functor, it will be convenient to consider the following alternative
formulations of multiplicative and IM actions.

Denoting by $\R\toto\R$ the unit groupoid of the real line $\R$, one
can directly see that a multiplicative action $h: \Rm \action
(\Gamma\toto E)$ is the same as a Lie groupoid map
\begin{equation}\label{eq:href}
h:(\Gamma\toto E)\times(\R\toto\R)\to(\Gamma\toto E)
\end{equation}
satisfying $h_1=\id$ and $h_\lambda
h_{\lambda'}=h_{\lambda\lambda'}$, for all $\lambda,\lambda'\in\R$.
Analogously, if $0_{\R}\then\R$ is the zero Lie algebroid over the
real line, then an IM action $h:\Rm\action(\Omega\then E)$ is
equivalent to a Lie-algebroid map
\begin{equation}\label{eq:hrefalg}
h:(\Omega\then E)\times(0_{\R}\then\R)\to(\Omega\then E)
\end{equation}
satisfying $h_1=\id$ and $h_\lambda h_{\lambda'}=h_{\lambda\lambda'}$, for all $\lambda,\lambda'\in\R$.

\begin{proposition}
\label{prop:diff.action}
Let $h: \Rm\action (\Gamma\toto E)$ be a multiplicative action.
Then:
\begin{enumerate}[(a)]
\item The map $A_\Gamma\times \mathbb{R}\to A_\Gamma$ given by
$(a,\lambda) \mapsto (h_\lambda)'(a)$ defines an IM-action $h'$ on
$A_\Gamma\then E$.

\item If $G\toto M$ denotes the Lie subgroupoid of $\Gamma\toto E$ given
by the fixed points of $h$, then the fixed points of $h'$ are
identified with $A_G\then M$.
\item If $h$ is a regular action, then so is $h'$.
\end{enumerate}
\end{proposition}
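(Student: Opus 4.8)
The plan is to obtain $h'$ as the image of $h$ under the Lie functor, using the reformulation \eqref{eq:href}--\eqref{eq:hrefalg} of multiplicative (resp.\ IM) actions as groupoid (resp.\ algebroid) morphisms out of a product with the real line. For part (a), I would start from the groupoid map $h:(\Gamma\toto E)\times(\R\toto\R)\to(\Gamma\toto E)$. Since the Lie functor preserves finite products and $\mathrm{Lie}(\R\toto\R)=0_\R\then\R$, applying it yields a Lie-algebroid map $\mathrm{Lie}(h):(A_\Gamma\then E)\times(0_\R\then\R)\to(A_\Gamma\then E)$. The crucial verification is that this morphism is exactly $(a,\lambda)\mapsto(h_\lambda)'(a)$: the $\R$-direction contributes trivially because the algebroid fibres of $0_\R$ are zero, so $\mathrm{Lie}(h)$ restricted to $A_\Gamma\times\{\lambda\}$ is $\mathrm{Lie}(h_\lambda)=(h_\lambda)'$. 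This simultaneously establishes that $h'$ is jointly smooth in $(a,\lambda)$ — which does not follow from differentiating the maps $h_\lambda$ separately — and that each $h'_\lambda$ is an algebroid map. The action axioms $h'_1=\id$ and $h'_\lambda h'_{\lambda'}=h'_{\lambda\lambda'}$ then drop out by functoriality: writing the axioms for $h$ as equalities of groupoid maps $\Gamma\times\R\times\R\to\Gamma$ and $\Gamma\to\Gamma$, in which the multiplication $\R\times\R\to\R$ enters as an (automatically groupoid) map of unit groupoids, and applying $\mathrm{Lie}$ gives the corresponding identities for $h'$.

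For part (b), the fixed points of $h'$ are the image of $h'_0=(h^\Gamma_0)'$. Since $h^\Gamma_0:\Gamma\to\Gamma$ is an idempotent groupoid map retracting $\Gamma$ onto the subgroupoid $G=h^\Gamma_0(\Gamma)$, I would factor it as $\Gamma\xrightarrow{r}G\hookrightarrow\Gamma$ and differentiate: $(h^\Gamma_0)'=\iota'\circ r'$, where $\iota':A_G\hookrightarrow A_\Gamma$ is the inclusion of Lie algebroids associated with the embedded subgroupoid $G$, and $r'$ is surjective (as $r\circ\iota=\id_G$ forces $r'\circ\iota'=\id_{A_G}$). Hence the image of $(h^\Gamma_0)'$ is precisely $A_G$, over the base $M=h^E_0(E)$. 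Combined with Lemma~\ref{lem:H0_alg} applied to the IM action $h'$, this identifies the fixed points of $h'$ with the Lie subalgebroid $A_G\then M$.

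For part (c), I would realize $h'$ as the restriction of a tangent action. The Lie functor is computed as $(h_\lambda)'=\dd h_\lambda|_{A_\Gamma}$, so $h'$ is exactly the tangent-lifted action $\lambda\mapsto\dd h_\lambda$ on $T\Gamma$ restricted to the invariant submanifold $A_\Gamma=\ker(\dd\sour)|_E\subset T\Gamma$. If $h$ is regular, then by Theorem~\ref{thm:vertical.lift} it is given by the homotheties of a vector bundle $\Gamma\to h^\Gamma_0(\Gamma)$, and as recorded in Example~\ref{ex:tangcot} the action $\lambda\mapsto\dd h_\lambda$ is then the homotheties of the tangent double vector bundle $T\Gamma\to T(h^\Gamma_0(\Gamma))$, hence a regular action. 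Since the regularity condition \eqref{eq:reg} is pointwise and $A_\Gamma$ is invariant, it is inherited by the restriction: if the velocity at $\lambda=0$ of $\lambda\mapsto h'_\lambda(a)$ vanishes, then so does that of $\lambda\mapsto\dd h_\lambda(a)$, whence $a=(\dd h)_0(a)=h'_0(a)$.

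I expect the main obstacle to be the careful identification in part (a) of $\mathrm{Lie}(h)$ with $(a,\lambda)\mapsto(h_\lambda)'(a)$ and the correct bookkeeping of the zero-algebroid factor $0_\R\then\R$; this is precisely where joint smoothness and the action property are secured at once. Once this dictionary is in place, parts (b) and (c) follow essentially formally from functoriality of $\mathrm{Lie}$ and from Theorem~\ref{thm:vertical.lift}.
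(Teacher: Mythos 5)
Your proof is correct. Parts (a) and (c) coincide with the paper's own argument: you differentiate the groupoid map \eqref{eq:href}, using that the Lie functor preserves products and sends $\R\toto\R$ to $0_\R\then\R$, and you obtain regularity of $h'$ by viewing it as the restriction of the tangent-lift action $\dd h:\Rm\action T\Gamma$ to the invariant submanifold $A_\Gamma$ (your extra justification that $\dd h$ is regular when $h$ is, via Theorem~\ref{thm:vertical.lift} and Example~\ref{ex:tangcot}, is precisely what the paper leaves tacit). Part (b) is where you genuinely diverge: the paper writes the fixed-point set of $h$ as the good fibred product of $(h_0,\id_\Gamma)$ with the diagonal (cf.\ \eqref{eq:fixed}), does the same for $h'$, and invokes Prop.~\ref{prop:diff.fp} (the Lie functor preserves good fibred products); you instead factor the idempotent $h_0^\Gamma$ through its image as $\Gamma\xto{r}G\hookrightarrow\Gamma$ --- legitimate since $G$ is an embedded Lie subgroupoid by Lemma~\ref{lem:H_0.gpd} --- and use bare functoriality: $r\circ\iota=\id_G$ gives $r'\circ\iota'=\id_{A_G}$, so the image of $(h_0^\Gamma)'=\iota'\circ r'$ is exactly $\iota'(A_G)$. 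Your route is more elementary, bypassing the appendix machinery on fibred products entirely; what the paper's route buys is the identification \emph{as Lie algebroids} for free. In your version one small point stays implicit: the Lie-algebroid structure that Lemma~\ref{lem:H0_alg} puts on the fixed points of $h'$ (as a subalgebroid of $A_\Gamma$) must be seen to agree with $\mathrm{Lie}(G)$. This is immediate --- a Lie-subalgebroid structure on a fixed invariant subbundle is unique, being determined by restriction of anchor and bracket, and $A_G\subset A_\Gamma$ is a Lie subalgebroid because $G\subset\Gamma$ is a Lie subgroupoid --- but it deserves a sentence to close the argument.
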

\begin{proof}
The Lie functor \eqref{eq:lieF} preserves products and maps $\R\toto
\R$ to $0_\R\then \R$. So viewing the action $h$ as a groupoid map
as in \eqref{eq:href}, it is immediate that by differentiation we
obtain a Lie-algebroid map $h': A_\Gamma\times \mathbb{R}\to
A_\Gamma$. One can also directly check that $h'$ satisfies
$$
(h')_\lambda = (h_\lambda)'
$$
for all $\lambda\in \R$, from where we see that $h'_1=\id$,
$h'_\lambda h'_\mu=h'_{\lambda\mu}$. So $(a)$ follows.

Regarding $(b)$, we can express the fixed points of $h$, i.e., the
image of $h_0$, as the good fibred product between the map
$(h_0,\id_\Gamma): \Gamma\to \Gamma\times \Gamma$ and the diagonal
$\Delta_\Gamma: \Gamma\to \Gamma\times \Gamma$ (c.f.
\eqref{eq:fixed}). The same holds for the fixed points of $h'$, now
considering the maps $(h'_0,\id_{A_\Gamma}): A_\Gamma\to
A_\Gamma\times A_\Gamma$ and the diagonal $\Delta_{A_\Gamma}$. Note
that these maps on $A_\Gamma$ correspond to the maps previously
defined on $\Gamma$ by the Lie functor. The conclusion follows from
the fact thet the Lie functor preserves fibred products, as shown in
Prop.~\ref{prop:diff.fp}.

Finally, $(c)$ holds because $h'$ is a restriction of the tangent
lift action $\dd h:\Rm\action T\Gamma$, which is regular if $h$ is.
\end{proof}

% differentiation of VB-groupoids

The previous proposition, together with our characterizations of
VB-groupoids and VB-algebroids in Theorems~\ref{thm:VB-gpd} and
\ref{thm:VB-alg}, lead to:

\begin{corollary}
\label{cor:diff.VB} If $\Gamma\toto E$ is a VB-groupoid over $G\toto
M$, then $A_{\Gamma}\then E$ inherits a VB-algebroid structure over
$A_{G}\then M$.
\end{corollary}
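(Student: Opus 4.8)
The plan is to assemble the claimed VB-algebroid structure directly from the machinery already in place, letting Proposition~\ref{prop:diff.action} do the heavy lifting. Starting from a VB-groupoid $\Gamma\toto E$ over $G\toto M$, Theorem~\ref{thm:VB-gpd} supplies a regular multiplicative action $h:\Rm\action(\Gamma\toto E)$ whose fixed-point subgroupoid is exactly $G\toto M$. Applying the Lie functor termwise, I would invoke Proposition~\ref{prop:diff.action}(a) to obtain an IM-action $h'$ on the total algebroid $A_\Gamma\then E$, and part (c) to guarantee that $h'$ is again regular. At this point Theorem~\ref{thm:VB-alg}, which identifies regular IM-actions with VB-algebroids, immediately endows $A_\Gamma\then E$ with a VB-algebroid structure.

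It then remains to identify the base algebroid of this VB-algebroid with $A_G\then M$. This is precisely the content of Proposition~\ref{prop:diff.action}(b): the base of the VB-algebroid associated to $h'$ is, by Theorem~\ref{thm:VB-alg} (via Lemma~\ref{lem:H0_alg}), the fixed-point subalgebroid $h'_0(A_\Gamma)\then h'^E_0(E)$, and part (b) identifies this with $A_G\then M$. Since the fixed points of $h$ on $E$ are $M=h^E_0(E)$ by construction, and the restricted action $h^E$ is unchanged upon differentiation on the side bundle, the side bundle $E\to M$ is preserved; thus $A_\Gamma\then E$ sits over $A_G\then M$ as required.

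The argument is essentially a concatenation of three earlier results, so there is little genuine obstacle left once the framework is granted. The one point deserving care is that the notion of ``regular IM-action'' in Theorem~\ref{thm:VB-alg} carries the base algebroid as \emph{determined} data rather than as a choice: the base algebroid of a VB-algebroid is recovered from its fixed points, so I should check that the fixed-point algebroid $h'_0(A_\Gamma)$ produced by differentiation genuinely coincides with $A_G$ and not merely with some abstractly isomorphic algebroid. This is exactly what Proposition~\ref{prop:diff.action}(b) asserts, its proof resting on the fact that the Lie functor preserves the good fibred products computing fixed points (Prop.~\ref{prop:diff.fp}). Consequently the corollary follows by simply citing parts (a)--(c) of Proposition~\ref{prop:diff.action} together with Theorems~\ref{thm:VB-gpd} and \ref{thm:VB-alg}, with no additional calculation needed.
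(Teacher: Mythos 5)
Your proof is correct and follows exactly the paper's route: the paper derives this corollary precisely by combining Proposition~\ref{prop:diff.action} (parts (a)--(c)) with the characterizations in Theorems~\ref{thm:VB-gpd} and \ref{thm:VB-alg}, which is what you do. Your additional remark about verifying that the fixed-point algebroid is genuinely $A_G\then M$ (and not merely isomorphic to it) is a sound reading of Proposition~\ref{prop:diff.action}(b) and matches the paper's reliance on Prop.~\ref{prop:diff.fp}.
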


\begin{remark}
By viewing vector bundles as particular cases of Lie groupoids and
Lie algebroids, one may view VB-groupoids (resp. VB-algebroids) as
special types of double Lie groupoids (resp. LA-groupoids);
Corollary~\ref{cor:diff.VB} then also follows from the fact that
double Lie groupoids can be differentiated to LA-groupoids
\cite{mac-double1}.
\end{remark}

% the lie functor on vector bundles

Corollary~\ref{cor:diff.VB} is part of a more general observation:
since VB-groupoid and VB-algebroid maps are characterized by
$\Rm$-equivariance, it is a direct verification that the Lie functor
\eqref{eq:lieF} restricts to a functor
\begin{equation}\label{eq:lieFVB}
\mathrm{VB}(G\toto M)\xto{\Lie} \mathrm{VB}(A_G\then M).
\end{equation}

\begin{remark}\label{rmk:identifications}\
The functor \eqref{eq:lieFVB} satisfies the following natural
properties, that we explicitly  describe for later use:
\begin{enumerate}[(a)]
\item
It commutes with the pullback functors defined in \eqref{eq:pbgp}
and \eqref{eq:pbalg}: If $\Phi: G_1 \to G_2$ is a Lie-groupoid map
and $\Gamma$ is a VB-groupoid over $G_2$, then the identification in
Prop.~\ref{prop:diff.fp} yields an isomorphism of VB-algebroids over
$A_{G_1}$,
$$
r_\Gamma: A_{\Phi^*\Gamma}\to (\Phi')^*A_{\Gamma}.
$$
which is natural, namely $r_{\Gamma_2}\circ (\Phi^*(\Psi))' =
((\Phi')^*(\Psi')) \circ r_{\Gamma_1}$ for any $\Psi: \Gamma_1\to
\Gamma_2$.

\item
It preserves duals as described in \eqref{eq:dualgp} and
\eqref{eq:dualalg}: given a VB-groupoid $\Gamma$ over $G$, by
differentiating the canonical pairing $\Gamma^*\times_G\Gamma\to\R$
we obtain a natural isomorphism of VB-algebroids
(c.f.\cite[Thm~11.5.12]{Mac-book})
$$
i_\Gamma:A_{\Gamma^*} \to (A_\Gamma)^*_{A_G}.
$$
For a VB-groupoid map $\Psi:\Gamma_1\to\Gamma_2$ over $G\toto M$ we have
$(\Psi')^*\circ i_{\Gamma_2} = i_{\Gamma_1}\circ (\Psi^*)'$.

\item
It maps tangent VB-groupoids to tangent VB-algebroids upon the
identification given by restriction of the natural involution of the
double tangent bundle (see \cite[Thm.~9.7.5]{Mac-book}),
$$
j_G:TA_G\to A_{TG}.
$$
This fact, combined with the previous item (b), shows that the Lie
functor \eqref{eq:lieFVB} also maps  cotangent VB-groupoids to
cotangent VB-algebroids, via
$$
\theta_G=j_G^*i_{TG}:A_{T^*G}\to T^*A_G.
$$
Given a Lie groupoid map $\Phi:G_1\to G_2$, the naturality of these
identifications is expressed by the following equations:
\begin{equation}\label{eq:Tj}
(\dd \Phi)'\circ j_{G_1} = j_{G_2}\circ \dd (\Phi'), \qquad \qquad
(\dd (\Phi'))^*\circ \theta_{G_2}=\theta_{G_1} \circ ((\dd
\Phi)^*)'.
\end{equation}

\item
The Lie functor preserves short exact sequences, in particular
kernels and cokernels. One can check that it preserves kernels by
expressing the kernel of a map $\Phi: \Gamma_1\to \Gamma_2$ as the
good fibred product between the map itself and the zero section of
$\Gamma_2$, and using Prop.~\ref{prop:diff.fp}. The fact that it
preserves cokernels follows, for instance, from this property for
kernels and the behavior under duality.
\end{enumerate}
\end{remark}

%%%%%%%%%%%%%%%%%%%%%%%%%%%%%%%%%%%%%%%%%%%%%%%%%%%%%%%%%%%%%

\subsection{The vertical lift for multiplicative and IM actions}

% Overview

In order to study the integration of VB-algebroids, i.e., the
inverse procedure to Corollary~\ref{cor:diff.VB}, it will be
convenient to reformulate Theorems~\ref{thm:VB-gpd} and
\ref{thm:VB-alg} as a more general functorial construction, building
on Theorem~\ref{thm:vertical.lift}.

% groupoid vertical lift

Given a multiplicative action $h:\Rm\action(\Gamma\toto E)$, not
necessarily regular, we know that its fixed points define a Lie
subgroupoid $G\toto M$ (by Lemma~\ref{lem:H_0.gpd}). The action has
an associated vertical bundle $V_h\Gamma\to G$ (see \eqref{eq:VE}),
while the restriction of $h$ to $E$ gives rise to the vertical
bundle $V_hE \to M$. A key observation is that $V_h\Gamma$ is a Lie
groupoid over $V_hE$, and $V_h\Gamma\toto V_hE$ is a VB-groupoid
over $G\toto M$; these facts follow from the constructions of
kernels and pullbacks in Remark~\ref{rmk:kergrp} and
Cor.~\ref{cor:VB-gpd}(b), since $V_h\Gamma$ is obtained by
restricting to $G$ the kernel of the VB-groupoid map $Th: T\Gamma\to
T\Gamma$. We refer to the VB-groupoid
\begin{equation}\label{eq:vlgp}
V_h\Gamma \toto V_hE
\end{equation}
over the fixed points $G\toto M$ as the {\em vertical bundle} of the
multiplicative action $h$. Note that, by construction, $V_h\Gamma$
naturally sits in the tangent VB-groupoid $T\Gamma\toto TE$ as a
VB-subgroupoid.

The following result offers a more general viewpoint to Theorem~\ref{thm:VB-gpd}.

\begin{proposition}
\label{prop:vl.VB-gpd} Let $\Gamma\toto E$ be a Lie groupoid endowed
with a multiplicative action $h:\Rm\action(\Gamma\toto E)$. Then:
\begin{enumerate}[(a)]
 \item The vertical lift $\V_h: (\Gamma\toto E)\to (V_h\Gamma\toto V_hE)$
is a Lie-groupoid morphism which is $\Rm$-equivariant;
 \item
The action $h$ is regular if and only if $\V_h$ is an isomorphism
onto the vertical bundle $V_h\Gamma$, in which case $\Gamma\toto E$
inherits the structure of a VB-groupoid.
\end{enumerate}
\end{proposition}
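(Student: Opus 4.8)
The plan is to establish both parts by leveraging the vertical lift machinery from Theorem~\ref{thm:vertical.lift} together with the functoriality of the vertical-bundle construction, checking compatibility with the groupoid structure at each stage. I recall that the vertical bundle $V_h\Gamma\toto V_hE$ was already shown to be a VB-groupoid in the discussion preceding the statement, so the content here is that the vertical lift $\V_h$ respects the groupoid structure and that regularity is exactly the condition making it an isomorphism.

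For part~(a), I would first observe that the vertical lift $\V_h\colon\Gamma\to T\Gamma$ is the restriction of a canonical construction that is natural in $\Gamma$, and that $h$ being multiplicative means $h_\lambda$ is a groupoid map for every $\lambda$. The cleanest route is to use the factorization $\V_h=\dd h\circ l$ from Remark~\ref{rem:factorization}: since $h\colon\Gamma\times\R\to\Gamma$ is a Lie-groupoid map (viewing $h$ as in \eqref{eq:href}), its tangent $\dd h\colon T\Gamma\times T\R\to T\Gamma$ is a map of tangent groupoids, and the inclusion $l$ is likewise a groupoid morphism into $T\Gamma\times T\R$; composing shows $\V_h$ is a groupoid morphism. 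Its image lands in $V_h\Gamma=\ker(\dd h_0)|_G$ by the very definition of the vertical bundle, and the target groupoid $V_h\Gamma\toto V_hE$ is the VB-subgroupoid of $T\Gamma\toto TE$ identified earlier. Equivariance of $\V_h$ for the $\Rm$-actions was already noted (via the chain rule) in the discussion following \eqref{eq:vlift}, so I would simply invoke that, observing that the relevant action on the target is the restriction of the tangent-lift action $\dd h$.

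For part~(b), the key input is Theorem~\ref{thm:vertical.lift}, which says precisely that $h$ is regular if and only if $\V_h$ is a diffeomorphism onto $V_h\Gamma$. Combining this with part~(a), regularity makes $\V_h$ a groupoid \emph{isomorphism} onto the VB-groupoid $V_h\Gamma\toto V_hE$; transporting the VB-groupoid structure back along $\V_h^{-1}$ then endows $\Gamma\toto E$ with a VB-groupoid structure, consistent with Theorem~\ref{thm:VB-gpd}. Conversely, if $\V_h$ is an isomorphism onto $V_h\Gamma$ then in particular it is a fiberwise bijection, forcing its zeros to coincide with its fixed points, which is exactly regularity.

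I expect the main obstacle to be verifying carefully that the target of $\V_h$, namely the vertical bundle $V_h\Gamma\toto V_hE$, carries the groupoid structure for which $\V_h$ is a morphism, and that this structure agrees with the one inherited as a VB-subgroupoid of the tangent groupoid $T\Gamma\toto TE$. The factorization $\V_h=\dd h\circ l$ is the device that makes this transparent, since it reduces everything to the functoriality of the tangent groupoid (Example~\ref{ex:tangG}) and the fact that $h$ and $l$ are groupoid maps; the remaining check is that restricting $\dd h$ to the kernel of $\dd h_0$ over the fixed-point subgroupoid yields precisely $V_h\Gamma\toto V_hE$ as a subgroupoid of $T\Gamma\toto TE$, which I would verify by tracking the source, target, and multiplication through the identifications.
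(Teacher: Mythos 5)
Your proposal is correct and follows essentially the same route as the paper's proof: part (a) via the factorization $\V_h=\dd h\circ l$ of Remark~\ref{rem:factorization}, functoriality of the tangent groupoid, and the fact that both components of $l$ are groupoid maps, and part (b) as a direct consequence of Theorem~\ref{thm:vertical.lift}. The only difference is that you spell out the transport of structure and the converse in (b) slightly more explicitly, which the paper leaves as immediate.
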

\begin{proof}
For $(a)$, since $V_h\Gamma\toto V_hE$ is a Lie subgroupoid of
$T\Gamma\toto TE$, and in light of Remark~\ref{rem:factorization},
we just need to show that the composition $\dd h\circ l$ is a
Lie-groupoid map, where $l:\Gamma\to T\Gamma\times T\R$ is defined
as in \eqref{eq:jdiag}. The tangent construction is functorial, so
$\dd h$ is a Lie-groupoid map (since so is \eqref{eq:href}).
Moreover, each component of $l$ is a Lie-groupoid map, since the
first one is the zero section of the tangent VB-groupoid of
$\Gamma\toto E$, while the second is a constant map into the unit
groupoid $T\R\toto T\R$.

The assertion in $(b)$ is an immediate consequence of
Theorem~\ref{thm:vertical.lift}.
\end{proof}

% algebroid vertical lift

There is an analogous result for IM actions on Lie algebroids. Given
an IM action $h:\Rm\action(\Omega\then E)$, not necessarily regular,
denote by $A\then M$ the Lie algebroid defined by the fixed points
of $h$ (c.f. Lemma~\ref{lem:H0_alg}). We have a natural VB-algebroid
$$
V_h\Omega\then V_hE
$$
over $A\then M$. As in the case of groupoids, it is a
VB-subalgebroid of $T\Omega\then TE$. These results are direct
consequences of our observations on pullbacks and kernels in
Cor~\ref{cor:VB-alg}(b) and Remark~\ref{rmk:keralg}. We refer to the
VB-algebroid $V_h\Omega\then V_hE$  as the {\em vertical bundle} of
the IM action $h$. Reasoning as in Proposition~\ref{prop:vl.VB-gpd},
we obtain:

\begin{proposition}
\label{prop:vl.VB-alg} Let $\Omega\then E$ be a Lie algebroid and
$h:\Rm\action(\Omega\then E)$ an IM action. Then:
\begin{enumerate}[(a)]
 \item
The vertical lift $\V_h: (\Omega\then E)\to (V_h\Omega\then V_hE)$
is Lie-algebroid morphism which is $\Rm$-equivariant;
 \item
The action $h$ is regular if and only if $\V_h$ is an isomorphism
onto the vertical bundle $V_h\Omega$. In this case $\Omega\then E$
inherits the structure of a VB-algebroid.
\end{enumerate}
\end{proposition}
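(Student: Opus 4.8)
The plan is to mirror the proof of Proposition~\ref{prop:vl.VB-gpd} exactly, replacing the groupoid-theoretic ingredients by their algebroid counterparts. For part $(a)$, I would invoke Remark~\ref{rem:factorization} to write the vertical lift as the composition $\V_h = \dd h \circ l$, where $l:\Omega\to T\Omega\times T\R$ sends $x$ to $((x,0),(0,\partial_\lambda))$. The key point is that each factor in this composition is a Lie-algebroid map. Indeed, $\dd h$ is the tangent prolongation of the Lie-algebroid map \eqref{eq:hrefalg}, and the tangent prolongation construction (Example~\ref{ex:tgVB}) is functorial in the algebroid category, so $\dd h$ is a Lie-algebroid morphism. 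The map $l$ is a Lie-algebroid map because its first component is the zero section of the tangent VB-algebroid $T\Omega\then TE$ (a Lie-algebroid map since the zero section of a VB-algebroid is), while its second component is a constant map into the zero Lie algebroid $0_{T\R}\then T\R$. Since $V_h\Omega\then V_hE$ is a Lie subalgebroid of $T\Omega\then TE$ and $\V_h$ factors through it, we conclude that $\V_h$ is a Lie-algebroid morphism; its $\Rm$-equivariance is the general fact recorded after \eqref{eq:vlift}.

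For part $(b)$, the statement is essentially a direct consequence of Theorem~\ref{thm:vertical.lift}: regularity of $h$ is equivalent to $\V_h$ being a diffeomorphism onto $V_h\Omega$. Combining this with part $(a)$, when $h$ is regular the map $\V_h$ is simultaneously a Lie-algebroid isomorphism and a diffeomorphism onto the vertical bundle. Since $V_h\Omega\then V_hE$ was already identified as a VB-algebroid over $A\then M$, transporting its structure back through $\V_h$ endows $\Omega\then E$ with a VB-algebroid structure. Equivalently, regularity of $h$ together with the IM property means $h$ is a regular IM action, so $\Omega\then E$ is a VB-algebroid by Theorem~\ref{thm:VB-alg}.

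The part I expect to require the most care is verifying that each component of $l$ is genuinely a Lie-algebroid map in the algebroid setting. In the groupoid proof this was transparent because the second component landed in a \emph{unit} groupoid $T\R\toto T\R$, where every constant map is automatically a morphism; here the analogous target is the \emph{zero} Lie algebroid $0_{T\R}\then T\R$, and one must confirm that a constant section map into a zero algebroid is a Lie-algebroid morphism (which it is, trivially, since both anchor and bracket vanish). One should also make sure the factorization of Remark~\ref{rem:factorization}, stated there for an arbitrary manifold with $\Rm$-action, respects the algebroid structures — i.e.\ that the diagram \eqref{eq:jdiag} is a diagram in the category of Lie algebroids once $\Omega\then E$ carries an IM action. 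This is exactly where functoriality of the tangent prolongation and the remark that $\dd h$ is induced from \eqref{eq:hrefalg} do the work, so no new verification beyond these functorial facts is needed.

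Altogether, the argument is a faithful transcription of the proof of Proposition~\ref{prop:vl.VB-gpd}, with ``tangent groupoid'' replaced by ``tangent prolongation algebroid'', ``unit groupoid'' by ``zero algebroid'', and Lemma~\ref{lem:H_0.gpd}/Cor.~\ref{cor:VB-gpd}(b)/Remark~\ref{rmk:kergrp} replaced by Lemma~\ref{lem:H0_alg}/Cor.~\ref{cor:VB-alg}(b)/Remark~\ref{rmk:keralg}, as the paper's own phrase ``Reasoning as in Proposition~\ref{prop:vl.VB-gpd}'' anticipates.
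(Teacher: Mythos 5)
Your proof is correct and takes exactly the approach the paper intends: the paper offers no separate argument for Proposition~\ref{prop:vl.VB-alg}, stating only that it follows by ``reasoning as in Proposition~\ref{prop:vl.VB-gpd}'', and your transcription --- the factorization $\V_h=\dd h\circ l$ from Remark~\ref{rem:factorization}, functoriality of the tangent prolongation applied to the Lie-algebroid map \eqref{eq:hrefalg}, the zero section of $T\Omega\then TE$ and the constant map into the zero algebroid over $T\R$ as the two components of $l$, and Theorem~\ref{thm:vertical.lift} for part (b) --- is precisely that reasoning. Your added verification that a constant map into a zero Lie algebroid is a Lie-algebroid morphism fills in the one detail the paper leaves implicit, and it is correct.
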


% regularization functors

As in Remark~\ref{rem:regular}, the last two propositions define
{\it regularization} functors from the categories of multiplicative
actions and IM actions to the categories of VB-groupoids and
VB-algebroids, respectively. The vertical bundle is the regular
object associated to an action in each case.

% vertical lift and the Lie functor

The following result clarifies the relation between vertical
lifts and the Lie functor.

\begin{proposition}
\label{prop:diff.vl} Let $h:\Rm\action(\Gamma\toto E)$ be a
multiplicative action, and let $h':\Rm\action(\Omega\then E)$ be the
corresponding IM action. Then the canonical isomorphism $j_\Gamma:
TA_\Gamma\to A_{T\Gamma}$ restricts to an isomorphism $ V_{h'}
A_\Gamma \xto\sim A_{V_h\Gamma}$ so that $j_\Gamma\circ \V_{h'} =
(\V_h)'$:
$$
\xymatrix{
A_\Gamma \ar[r]^{\V_{h'}} \ar[dr]_{(\V_h)'} & V_{h'}A_\Gamma \ar[d]^{j_\Gamma}_{\wr}\\
 & A_{V_h\Gamma}.
}
$$
\end{proposition}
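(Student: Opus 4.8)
The plan is to reduce the statement to the naturality of the canonical identification $j$ recorded in Remark~\ref{rmk:identifications}(c), together with the facts from Remark~\ref{rmk:identifications}(a),(d) that the Lie functor commutes with kernels and pullbacks. Two things must be established: that $j_\Gamma$ carries the subbundle $V_{h'}A_\Gamma$ isomorphically onto $A_{V_h\Gamma}$, and that the triangle $j_\Gamma\circ\V_{h'}=(\V_h)'$ commutes. I would treat these in turn.

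First I would realize both vertical bundles as kernels and relate them through $j_\Gamma$. Since $h'_\lambda=(h_\lambda)'$ by Proposition~\ref{prop:diff.action}, we have $h'_0=(h_0)'$, and applying the naturality relation \eqref{eq:Tj} to the groupoid map $h_0\colon\Gamma\to\Gamma$ gives
\[
(\dd h_0)'\circ j_\Gamma = j_\Gamma\circ \dd h'_0 ,
\]
so $j_\Gamma$ conjugates $\dd h'_0$ on $TA_\Gamma$ with $(\dd h_0)'$ on $A_{T\Gamma}$. By \eqref{eq:VE}, $V_{h'}A_\Gamma=\ker(\dd h'_0)$ restricted over the fixed points $A_G=h'_0(A_\Gamma)$ (using Proposition~\ref{prop:diff.action}(b)), whereas $V_h\Gamma=\ker(\dd h_0)$ restricted over $G$; since the Lie functor preserves kernels and pullbacks, differentiating identifies $A_{V_h\Gamma}$ with $\ker((\dd h_0)')$ restricted over $A_G$. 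The displayed conjugation then maps $\ker(\dd h'_0)$ isomorphically onto $\ker((\dd h_0)')$, compatibly with the restriction over $A_G$ (as $j_\Gamma$ is the canonical VB-algebroid isomorphism and respects the tangent structure), yielding the isomorphism $V_{h'}A_\Gamma\xto{\sim}A_{V_h\Gamma}$.

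Next I would verify the triangle using the factorization $\V_h=\dd h\circ l$ of Remark~\ref{rem:factorization}, in which both $\dd h$ and $l$ are Lie-groupoid maps (as in the proof of Proposition~\ref{prop:vl.VB-gpd}). Applying the Lie functor gives $(\V_h)'=(\dd h)'\circ l'$, while on the infinitesimal side $\V_{h'}=\dd h'\circ l_\Omega$ with $l_\Omega$ the analogue of $l$ for $A_\Gamma$. Applying \eqref{eq:Tj} now to the action map $h\colon\Gamma\times(\R\toto\R)\to\Gamma$ yields $(\dd h)'\circ(j_\Gamma\times j_\R)=j_\Gamma\circ\dd h'$, so it remains to identify $l'=(j_\Gamma\times j_\R)\circ l_\Omega$. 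This splits into two elementary checks: that $j_\Gamma$ sends the zero section of $TA_\Gamma$ to $(z_\Gamma)'$, where $z_\Gamma\colon\Gamma\to T\Gamma$ is the zero section (immediate since $j_\Gamma$ identifies the tangent VB-algebroid with $A_{T\Gamma}$ and preserves zero sections, Remark~\ref{rmk:identifications}(c)), and that the constant component of $l$ differentiates compatibly with $j_\R$ (immediate, as $T\R\toto T\R$ is a unit groupoid). Combining these, $j_\Gamma\circ\V_{h'}=(\dd h)'\circ(j_\Gamma\times j_\R)\circ l_\Omega=(\dd h)'\circ l'=(\V_h)'$.

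The main obstacle I anticipate is the bookkeeping in this last step: one must keep careful track of the several vector-bundle and groupoid structures carried by $TA_\Gamma$, $A_{T\Gamma}$, and the product with $(\R\toto\R)$, and confirm that the canonical identifications $j_\Gamma,j_\R$ intertwine the zero-section and constant parts of the factorization with their algebroid counterparts. Once these compatibilities are in place, both the isomorphism and the commuting triangle follow formally from the naturality of $j$ and from the Lie functor preserving kernels, pullbacks, and products.
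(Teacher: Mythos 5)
Your proof is correct and follows essentially the same route as the paper: the commuting triangle is obtained exactly as in the paper's proof, via the factorization $\V_h=\dd h\circ l$ of Remark~\ref{rem:factorization}, the componentwise identity $l_\Gamma'=j_{\Gamma\times\R}\circ l_A$, and the naturality relation \eqref{eq:Tj} applied to the action map $h$. Your first step, establishing the restriction isomorphism $V_{h'}A_\Gamma\xto{\sim}A_{V_h\Gamma}$ by applying \eqref{eq:Tj} to $h_0$ together with the preservation of kernels and pullbacks by the Lie functor, is a detail the paper leaves implicit, and it is a correct and welcome addition.
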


\begin{proof}
Let us view the action $h$ as in \eqref{eq:href} and consider the
factorization of $\V_{h}$ as in Remark~\ref{rem:factorization}; we
write $\V_h = \dd h \circ l_\Gamma$, recalling that $l_\Gamma:
\Gamma \to T\Gamma\times T\R$ is a Lie-groupoid map. There is an
analogous factorization associated with $h'$, that we write as
$\V_{h'}=\dd (h') \circ l_A$, and $l_A: A_{\Gamma}\to TA_\Gamma \times
T0_\R$ is a Lie-algebroid map. By considering each component of the
map $l_\Gamma : \Gamma \to T\Gamma \times T\R$ (and recalling that
both tangent and Lie functors respect direct products), one may
directly check that
$$
l_\Gamma' = j_{\Gamma\times \R} \circ l_A : A_{\Gamma}\to
A_{T\Gamma\times T\R}.
$$
By using the naturality of $j$ (cf.
Remark~\ref{rmk:identifications},(c)), we conclude that
\begin{align*}
(\V_h)' & = (\dd h)' \circ l_\Gamma' = (\dd h)' \circ
j_{\Gamma\times \R} \circ
l_A\\
&= j_{\Gamma} \circ \dd (h') \circ l_A = j_{\Gamma} \circ
\V_{h'}.\qedhere
\end{align*}
\end{proof}

%%%%%%%%%%%%%%%%%%%%%%%%%%%%%%%%%%%%%%%%%%%%%%%%%%%%%%%%%%%%%%%%%%%%%%%%%%%%%

\subsection{Integration}

% Overview

%It is well known that not all Lie algebroids are integrable, i.e., come from Lie groupoids  (see \cite{CF}).

Assuming that the total Lie algebroid of a VB-algebroid is
integrable, the issue discussed in this subsection is whether it is
integrated by a VB-groupoid. Following Theorems~\ref{thm:VB-gpd} and
\ref{thm:VB-alg}, the problem of integrating VB-algebroids can be
viewed in two steps: first integrating IM actions to multiplicative
actions, and then dealing with the additional regularity condition.

% first step: Integration of actions (Lie 2)

The first step, integration of IM actions, is handled by Lie's
second theorem, recalled in the beginning of the section.

\begin{lemma}\label{lem:int.action}
Let $\Gamma\toto E$ be a source-simply-connected Lie groupoid with
Lie algebroid $\Omega\then E$. Then any IM action $\tilde{h}:
\Rm\action(\Omega\then E)$ integrates
 to a multiplicative
action $h: \Rm\action(\Gamma\toto E)$, in a way such that
$(h_\lambda)'=\tilde{h}_\lambda$,  for all $\lambda$.
\end{lemma}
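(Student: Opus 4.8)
The plan is to realize the IM action as a morphism of Lie algebroids and integrate it to a groupoid morphism using Lie's second theorem. Recall from \eqref{eq:hrefalg} that an IM action $\tilde h$ on $\Omega\then E$ is precisely a Lie-algebroid map
$$
\tilde h:(\Omega\then E)\times(0_{\R}\then\R)\to(\Omega\then E)
$$
satisfying the two algebraic constraints $\tilde h_1=\id$ and $\tilde h_\lambda \tilde h_{\lambda'}=\tilde h_{\lambda\lambda'}$. The source of this map is the product Lie algebroid $(\Omega\then E)\times(0_\R\then\R)$, whose integrating groupoid is $(\Gamma\toto E)\times(\R\toto\R)$, since $0_\R\then \R$ integrates to the unit groupoid $\R\toto\R$ and the Lie functor preserves products. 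The target is the source-simply-connected groupoid $\Gamma\toto E$.

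First I would check that the source groupoid $(\Gamma\toto E)\times(\R\toto\R)$ is source-simply-connected, which holds because $\Gamma\toto E$ is so by hypothesis and $\R\toto\R$ has singleton source-fibers. Then Lie's second theorem (recalled in the beginning of the section) applies: the Lie-algebroid map $\tilde h$ integrates to a unique Lie-groupoid map
$$
h:(\Gamma\toto E)\times(\R\toto\R)\to(\Gamma\toto E)
$$
with $\mathrm{Lie}(h)=\tilde h$. By the characterization in \eqref{eq:href}, such a map is the same thing as a multiplicative action on $\Gamma\toto E$, provided we verify the action axioms. The identity $(h_\lambda)'=\tilde h_\lambda$ for each fixed $\lambda$ follows by differentiating the restriction of $h$ to the slice $\Gamma\times\{\lambda\}$; equivalently, it is built into the fact that $h$ integrates $\tilde h$ as a map out of the product.

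It remains to confirm that $h$ inherits the algebraic properties $h_1=\id$ and $h_\lambda h_{\lambda'}=h_{\lambda\lambda'}$ making it a genuine action. This is where I expect the only real subtlety to lie, and it is handled by the uniqueness half of Lie's second theorem together with the injectivity noted in Remark~\ref{rem:sconnect}. For the unit axiom, both $h_1$ and $\id_\Gamma$ are groupoid endomorphisms of the source-simply-connected $\Gamma$ differentiating to $\tilde h_1=\id$, hence they coincide. For associativity, the two groupoid maps $(\lambda,\lambda',x)\mapsto h_\lambda(h_{\lambda'}(x))$ and $(\lambda,\lambda',x)\mapsto h_{\lambda\lambda'}(x)$ both integrate the corresponding algebroid maps, which agree because $\tilde h$ is already an action; since $\Gamma$ is source-simply-connected and the common source groupoid $\Gamma\times(\R\toto\R)\times(\R\toto\R)$ is again source-simply-connected, the injectivity of the Lie functor forces the two integrations to be equal. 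Thus $h$ is a multiplicative action integrating $\tilde h$, completing the argument.
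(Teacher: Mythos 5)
Your proposal is correct and follows essentially the same route as the paper: realize the IM action as a Lie-algebroid map $(\Omega\then E)\times(0_\R\then\R)\to(\Omega\then E)$, note that $(\Gamma\toto E)\times(\R\toto\R)$ has the same source-fibers as $\Gamma\toto E$ so Lie's second theorem applies, and then recover the action axioms from uniqueness of integration of morphisms. The paper compresses this last step into one line (``the action axioms for $h$ directly follow''), while you spell out the unit and associativity verifications via Remark~\ref{rem:sconnect}; that is exactly the intended argument.
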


\begin{proof}
Viewing the IM action as a Lie-algebroid map $\tilde{h}:
\Omega\times 0_\R\to \Omega$ (c.f. \eqref{eq:hrefalg}), and since
the source-fibers of $\Gamma\times\R$ are diffeomorphic to those of
$\Gamma$, we can integrate $\tilde h$ via Lie's second theorem to
obtain a Lie-groupoid map $h: \Gamma \times \R \to \R$ (as in
\eqref{eq:href}) such that $h'=\tilde{h}$. The uniqueness of the
integration of maps implies that $(h')_\lambda =
(h_\lambda)'=\tilde{h}_\lambda$, from where the action axioms for
$h$ directly follow.
\end{proof}

% non s-simlpy connected example

The next example illustrates the relevance of the
source-simply-connectedness hypothesis in the previous lemma.

\begin{example}
Let $\R\then\ast$ be the 1-dimensional Lie algebra, viewed as a
VB-algebroid over the point $\ast\then\ast$. If we take $\Gamma=S^1$
as the Lie group integrating $\R$, then it is not possible to
integrate the action by homotheties $\tilde{h}$ as in
Lemma~\ref{lem:int.action}, since its only fixed point
$h_0(S^1)=\ast$ (see Prop.~\ref{prop:diff.action}(b)) would have to
be a retract of $S^1$, which cannot happen.
\end{example}

% second step: regularity

We now address the second step, that of regularity, by proving the
converse to Prop.~\ref{prop:diff.action}(c).

\begin{proposition}\label{prop:int.action}
Let $\Gamma\toto E$ be a Lie groupoid equipped with a multiplicative action
$h:\Rm\action(\Gamma\toto E)$, and let $h'$ the corresponding IM action on $A_\Gamma$.
If $h'$ is regular then so is $h$.
\end{proposition}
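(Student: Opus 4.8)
The plan is to verify the defining condition \eqref{eq:reg} of regularity for $h$ directly, using Theorem~\ref{thm:vertical.lift}: given $x\in\Gamma$ with $\V_h(x)=0$, I aim to show that $x=h_0(x)$. First I note that the base actions of $h$ and $h'$ coincide, since $(h_\lambda)'|_E=h_\lambda^E$; hence the hypothesis already yields that $h^E$ is regular. As $\sour_\Gamma$ is $\Rm$-equivariant, we have $\dd\sour_\Gamma(\V_h(x))=\V_{h^E}(\sour_\Gamma(x))$, so $\V_h(x)=0$ together with the regularity of $h^E$ forces $\sour_\Gamma(x)\in M$, and likewise $\tar_\Gamma(x)\in M$. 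Thus the endpoints of $x$ are fixed points.

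Next I reduce to an isotropy group. Write $e=\sour_\Gamma(x)$ and $g=h_0(x)\in G$; since $e$ and $\tar_\Gamma(x)$ are fixed we have $\sour(g)=e$ and $\tar(g)=\tar_\Gamma(x)$, so $z:=g^{-1}x$ is defined and lies in the isotropy group $K:=\Gamma_e$. Because each $h_\lambda$ is a groupoid morphism and $g$ is fixed, $h_\lambda(z)=g^{-1}h_\lambda(x)$, whence $\V_h(z)=\dd L_{g^{-1}}(\V_h(x))=0$ and $h_0(z)=g^{-1}g=u(e)$. Since $x=gz$, proving $z=u(e)$ will give $x=h_0(x)$, so it suffices to show that the restriction of $h$ to $K$ is regular.

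The main obstacle is precisely this isotropy statement. Each $h_\lambda$ restricts to a Lie-group homomorphism of $K$, and its linearization at the unit is the restriction of $h'$ to $\mathfrak{k}=\mathrm{Lie}(K)\subset (A_\Gamma)_e$; as $h'$ is regular and $\mathfrak{k}$ is an invariant subspace, this linearization is regular, i.e. scalar multiplication onto the fixed subspace $\mathfrak{k}_0=\mathfrak{k}\cap A_G$. Now $z$ lies in the contracting normal subgroup $N=\ker(h_0|_K)$, with $\mathrm{Lie}(N)=\mathfrak{k}_1:=\ker(h'_0|_{\mathfrak{k}})$ and $h'_\lambda|_{\mathfrak{k}_1}=\lambda\cdot\mathrm{id}$; moreover the path $\lambda\mapsto h_\lambda(z)$ lies in $N$ and joins $z$ to $h_0(z)=u(e)$, so $z$ belongs to the identity component $N^0$. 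The crux is to identify $N^0$: it is a connected Lie group whose Lie algebra $\mathfrak{k}_1$ is abelian, because the scaling derivation generating $h'$ has only the eigenvalues $0$ and $1$, forcing $[\mathfrak{k}_1,\mathfrak{k}_1]=0$. Each $h_\lambda$ is a homomorphism with $\mathrm{Lie}(h_\lambda)=\lambda\cdot\mathrm{id}$, so by naturality $h_\lambda\circ\exp=\exp\circ(\lambda\cdot\mathrm{id})$, and the kernel lattice $\Lambda\subset\mathfrak{k}_1$ of $\exp$ satisfies $\lambda\Lambda\subset\Lambda$ for every $\lambda$, which forces $\Lambda=0$. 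Hence $\exp\colon\mathfrak{k}_1\to N^0$ is an isomorphism conjugating $h$ to scalar multiplication on $\mathfrak{k}_1$, which is manifestly regular; therefore $\V_h(z)=0$ yields $z=u(e)$, completing the argument.

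Conceptually, Prop.~\ref{prop:diff.vl} gives the clean restatement that $(\V_h)'=j_\Gamma\circ\V_{h'}$ is an isomorphism once $h'$ is regular, so $\V_h$ is a groupoid morphism whose image under the Lie functor is invertible. The reason a hands-on argument is nevertheless required is that, in the absence of any source-connectedness hypothesis on $\Gamma$, the Lie functor need not reflect isomorphisms (as the covering $\R\to S^1$ illustrates); this is exactly why regularity must be checked pointwise, and why the potential obstruction is concentrated in the topology of the isotropy groups, which the computation above shows to be trivial.
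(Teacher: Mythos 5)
Your proof is correct, but it handles the crucial difficulty --- the absence of any source-connectedness hypothesis --- by a genuinely different route than the paper. The paper starts where your closing paragraph does: by Prop.~\ref{prop:diff.vl}, regularity of $h'$ makes $(\V_h)'=j_\Gamma\circ\V_{h'}$ an isomorphism. It then argues globally that $\V_h$ is \'etale (a groupoid map $\Phi$ is \'etale if and only if $\Phi'$ is an isomorphism, by applying Remark~\ref{rem:sscandmap}(b) to the tangent VB-groupoid map $\dd\Phi$), and that the vertical lift of a non-regular action can never be \'etale: a point $z$ with $h_0(z)\neq z$ and $\V_h(z)=0$ gives a nonzero velocity vector $X$ of the non-constant curve $\lambda\mapsto h_\lambda(z)$, and $\dd\V_h$ vanishes on $X$ because $\V_h(h_\lambda(z))=0$ for all $\lambda$. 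You instead verify \eqref{eq:reg} pointwise: source/target equivariance plus regularity of $h^E$ pins the endpoints in $M$, translation by $g=h_0(x)$ reduces to the isotropy group at a fixed unit, and classical Lie group theory finishes there ($\mathfrak{k}_1$ is abelian since only weights $0,1$ occur under $h'_\lambda$ while $[\mathfrak{k}_1,\mathfrak{k}_1]$ would have weight $2$; scale-invariance of the discrete kernel of $\exp$ gives $N^0\cong\mathfrak{k}_1$, on which the action is manifestly regular). What each buys: the paper's argument is shorter and stays within the VB-groupoid formalism it has already built, whereas yours is more elementary and self-contained, uses the regularity of $h'$ only fibrewise (on $E$ and on $\mathfrak{k}$), and pinpoints exactly where an obstruction could live --- in the topology of the isotropy groups --- which nicely complements the circle example following Lemma~\ref{lem:int.action}. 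Two small things to tighten: your $K:=\Gamma_e$ should be the isotropy group $\sour_\Gamma^{-1}(e)\cap\tar_\Gamma^{-1}(e)$, not the source fibre; and the assertion that a regular linear $\Rm$-action equals $\id\oplus\lambda\,\id$ on $\mathfrak{k}_0\oplus\mathfrak{k}_1$ deserves one line of proof: the vertical lift of a linear action is itself linear and $\Rm$-equivariant, so regularity gives $\V_{h'}(h'_\lambda v)=\lambda\,\V_{h'}(v)$ with $\V_{h'}|_{\mathfrak{k}_1}$ an invertible linear map of $\mathfrak{k}_1$, whence $h'_\lambda|_{\mathfrak{k}_1}=\lambda\,\id$.
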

\begin{proof}
In the commutative triangle of Proposition~\ref{prop:diff.vl},
we know that $\V_{h'}$ is an isomorphism because $h'$ is regular, so
$(\V_h)': A_\Gamma \to A_{V_h\Gamma}$ is an isomorphism as well.

If we assume that $\Gamma\toto E$ is source-simply connected, then
by Lemma~\ref{lem:H_0.gpd} we know that $G = h_0(\Gamma)$ is
source-simply-connected, and hence so is $V_h\Gamma$ (since it is a
VB-groupoid over $G$, see Remark~\ref{rem:sscandmap}). It then
follows that $\V_h: \Gamma\to V_h\Gamma$ must be an isomorphism,
showing that $h$ is regular. Not assuming
source-simply-connectedness of $\Gamma$, we need a more elaborate
argument.

The key observation is that a groupoid map $\Phi: \Gamma_1\to
\Gamma_2$ is \'etale, i.e., its differential is invertible at all
points, if and only if $\Phi': A_{\Gamma_1}\to A_{\Gamma_2}$ is an
isomorphism. To see that, consider the induced VB-groupoid map $\dd
\Phi: T\Gamma_1\to T\Gamma_2$ and use Remark~\ref{rem:sscandmap}(b).
%\comment{clear?}.
%Observe that a groupoid map $\Phi: \Gamma_1\to \Gamma_2$ is etal if
%and only if its derivative $\Phi'$ is so. In fact, it follows from
%the naturality of the core sequence
%$$ 0\to \tar^*A_{\Gamma_i} {\to} T\Gamma_i {\to} \sour^*TE_i \to 0$$
%that the differential of $\Phi'$ is injective (resp. surjective) if
%and only if so is the differential of $\Phi$.
In our case, we conclude that the groupoid map $\V_h: \Gamma\to
V_h\Gamma$ is \'etale. The proof ends with the observation that this
cannot happen for the vertical lift corresponding to a non-regular
action. Indeed, if $h$ is not regular, then there exists $z
\in\Gamma$ such that $h_0(z)\neq z=h_1(z)$ and $\V_h(z)=0$. In
particular, the curve $\lambda\mapsto h_\lambda(z)$ is not constant,
so there is a point with non-zero velocity vector $X$. But
$\V_h(h_\lambda(z))=0$ for all $\lambda$ and therefore the
differential of $\V_h$ vanishes on $X$.
\end{proof}

% integration theorem

For a VB-algebroid $\Omega\then E$ over $A\then M$, since $A$ sits
in $\Omega$ as a Lie subalgebroid, the assumption that $\Omega\then
E$ is integrable implies that so is $A\then M$, see e.g.
\cite[Prop.~6.7]{mm} (the integrability of $A$ also follows from
Lemma~\ref{lem:int.action} and Prop.~\ref{prop:diff.action}(b)).
Combining our last two results we have the integration of
VB-algebroids:

\begin{theorem}\label{thm:integration}
Let $\Omega\then E$ be a VB-algebroid over $A\then M$, so that
$\Omega\then E$ is integrable. Then its source-simply-connected
integration $\Gamma\toto E$ carries a VB-groupoid structure over the
source-simply-connected Lie groupoid $G\toto M$ integrating $A\then
M$,
\begin{equation}\label{eq:VBalg2}
%\begin{matrix}
%\Omega & \then & E\\
%\downarrow & & \downarrow\\
%A& \then & M,
%\end{matrix}
%\qquad \leadsto \qquad
\begin{matrix}
\Gamma & \toto & E\\
\downarrow & & \downarrow\\
G& \toto & M,
\end{matrix}
\end{equation}
%, in such a way that \eqref{eq:VBgrp2} and \eqref{eq:VBalg2} are related by \eqref{eq:lieFVB}.
uniquely determined by the property that its differentiation is the
given VB-algebroid.
\end{theorem}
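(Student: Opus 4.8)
The plan is to deduce the result by combining the characterizations of VB-algebroids and VB-groupoids as regular actions (Theorems~\ref{thm:VB-alg} and \ref{thm:VB-gpd}) with the integration statements for actions established above. By Theorem~\ref{thm:VB-alg}, the VB-algebroid $\Omega\then E$ is the same datum as a regular IM action $\tilde h:\Rm\action(\Omega\then E)$, whose fixed points recover the base algebroid $A\then M$. Thus the whole problem reduces to integrating this regular IM action to a regular multiplicative action on a source-simply-connected integration of $\Omega\then E$, and then reading off the resulting VB-groupoid via Theorem~\ref{thm:VB-gpd}.

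First I would let $\Gamma\toto E$ denote the source-simply-connected Lie groupoid integrating $\Omega\then E$, which exists by hypothesis and has units $E$ since $E$ is the base of the total algebroid. By Lemma~\ref{lem:int.action}, the IM action $\tilde h$ integrates to a multiplicative action $h:\Rm\action(\Gamma\toto E)$ with $(h_\lambda)'=\tilde h_\lambda$ for all $\lambda$; in particular the corresponding IM action $h'$ equals $\tilde h$. Since $\tilde h$ is regular, Proposition~\ref{prop:int.action} shows that $h$ is regular as well. Applying Theorem~\ref{thm:VB-gpd} to this regular multiplicative action then endows $\Gamma\toto E$ with a VB-groupoid structure.

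Next I would identify the base groupoid and verify the differentiation property. The base groupoid is $G\toto M$ with $G=h^\Gamma_0(\Gamma)$ and $M=h^E_0(E)$; by Lemma~\ref{lem:H_0.gpd} it is a source-simply-connected Lie subgroupoid of $\Gamma\toto E$, and by Proposition~\ref{prop:diff.action}(b) its Lie algebroid $A_G$ is identified with the fixed points of $h'=\tilde h$, that is, with $A$. Hence $G\toto M$ is \emph{the} source-simply-connected integration of $A\then M$. Differentiating the VB-groupoid just constructed amounts to differentiating the multiplicative action $h$, which by construction returns $h'=\tilde h$, i.e.\ the regular IM action corresponding to the original VB-algebroid; so its differentiation is the given VB-algebroid.

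Finally, uniqueness follows from Lie's second theorem: a VB-groupoid structure on the source-simply-connected $\Gamma\toto E$ is exactly a regular multiplicative action, and any two such actions differentiating to $\tilde h$ must coincide by the uniqueness of integration of the Lie-algebroid map \eqref{eq:hrefalg} (cf.\ Remark~\ref{rem:sconnect}). I expect the genuinely delicate point to be the transfer of regularity from $h'$ to $h$, but this is precisely the content of Proposition~\ref{prop:int.action}, so here it enters only as a black box; the remaining steps amount to an assembly of the preceding results.
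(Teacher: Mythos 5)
Your proposal is correct and follows essentially the same route as the paper's own proof: encode the VB-algebroid as a regular IM action via Theorem~\ref{thm:VB-alg}, integrate it to a multiplicative action by Lemma~\ref{lem:int.action}, transfer regularity via Proposition~\ref{prop:int.action}, and conclude with Theorem~\ref{thm:VB-gpd}. Your additional verifications (identifying the base groupoid through Lemma~\ref{lem:H_0.gpd} and Proposition~\ref{prop:diff.action}(b), and the uniqueness argument via Remark~\ref{rem:sconnect}) are correct elaborations of points the paper leaves implicit.
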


\begin{proof}
By Theorem~\ref{thm:VB-alg}, the given VB-algebroid is described by
an IM action on $\Omega\then E$, and since $\Gamma\toto E$ is its
source-simply-connected integration, it acquires a multiplicative
$\Rm$-action $h$ by Lemma~\ref{lem:int.action}. By
Prop.~\ref{prop:int.action}, we know that $h$ is regular, and
Theorem~\ref{thm:VB-gpd} concludes the proof.
\end{proof}

% example

Though the integrability of the total algebroid in a VB-algebroid
implies that of the base algebroid, the
converse is not true. %: $A$ may be integrable and $\Omega$ fail to be
%so.
We illustrate this fact with an example.

\begin{example}\label{ex:nonint}
Let $M$ be a connected manifold, and let $\omega \in \Omega^2(M)$ be
a closed $2$-form such that its group of periods,
$$
\left \{ \int_\gamma \omega \, \Big | \ \gamma \in \pi_2(M) \right
\} \subset \mathbb{R},
$$
is not trivial. Let $E$ and $C$ denote the trivial line bundle $q:
\mathbb{R}_M=M\times \mathbb{R} \to M$, and consider the vector
bundle $\Omega = TM\oplus E \oplus C \to E$. For $X\in
\mathcal{X}(M)$ and $f: \mathbb{R}_M\to \mathbb{R}_M$ satisfying
$q\circ f=q$, let $\sigma_{X,f}$ be the section of $\Omega\to E$
given by $\sigma_{X,f}(e)=(X(q(e)),e,f(e))$. Then $\Omega$ carries a
unique Lie-algebroid structure such that its anchor map and bracket
satisfy
$$
\rho(X,e,c) = (X, 0) \; \in TE|_e\cong TM|_{q(e)}\times \mathbb{R},
\quad\text{ and}\quad
[\sigma_{X_1,c_1}, \sigma_{X_2,c_2}] = \sigma_{[X_1,X_2],f},
$$
where $c_i \in \mathbb{R}$ (viewed as constant maps $\mathbb{R}_M\to
\mathbb{R}_M$), and $f(e)=e \omega(X_1,X_2)|_{q(e)}$. The
$\Rm$-action on $\Omega$ defined by $h_\lambda(X,e,c)= (X, \lambda
e, \lambda c)$ defines a VB-algebroid structure on $\Omega\then E$
over $TM \then M$. Although the base Lie algebroid is clearly
integrable, $\Omega \then E$ is not. This follows from the
obstruction theory for integrability of \cite{CF}: one can check
that the monodromy group of $\Omega$ at $e\in E|_x=\mathbb{R}$
corresponds to the group of periods of $e \omega \in \Omega^2(M)$,
so any of its non-trivial elements accumulate at $0$ as $e$ goes to
$0$ (c.f. \cite[Thm.~4.1]{CF}).
\end{example}

This example is the starting point for the study of obstructions to
integrability of VB-algebroids, further developed in \cite{BrCO}.

% integration of VB-maps

As shown by the next proposition, the expected relations between
VB-algebroid and VB-groupoid maps via integration hold:

\begin{proposition}\label{prop:int.maps}
Let $\Gamma_1$ and $\Gamma_2$ be VB-groupoids.
\begin{enumerate}[(a)]
\item If $\Gamma_1$ is source-connected, then $\Phi:\Gamma_1\to\Gamma_2$
is a VB-groupoid map if and only if $\Phi'$ is a VB-algebroid map.
\item If $\Gamma_1$ is source-simply-connected, then there is a one-to-one
correspondence between VB-groupoid maps $\Gamma_1\to\Gamma_2$ and
VB-algebroid maps $A_{\Gamma_1}\to A_{\Gamma_2}$.
\end{enumerate}
\end{proposition}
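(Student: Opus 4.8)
The plan is to reduce the statement to the characterizations of VB-maps as \emph{equivariant} maps provided by Theorems~\ref{thm:VB-gpd} and \ref{thm:VB-alg}, combined with Lie's second theorem and the injectivity of the Lie functor on source-connected groupoids (Remark~\ref{rem:sconnect}). Write $h^1,h^2$ for the regular multiplicative actions encoding the VB-groupoid structures on $\Gamma_1,\Gamma_2$, and set $g^i=(h^i)'$ for the induced IM actions on $A_{\Gamma_i}$, so that $g^i_\lambda=(h^i_\lambda)'$ for every $\lambda$ by Prop.~\ref{prop:diff.action}(a). Under these characterizations, a Lie-groupoid map $\Phi:\Gamma_1\to\Gamma_2$ is a VB-groupoid map exactly when $\Phi\circ h^1_\lambda=h^2_\lambda\circ\Phi$ for all $\lambda$, and likewise $\Phi'$ is a VB-algebroid map exactly when $\Phi'\circ g^1_\lambda=g^2_\lambda\circ\Phi'$ for all $\lambda$.

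For part (a), the forward implication needs no connectivity assumption: differentiating the equivariance relation $\Phi\circ h^1_\lambda=h^2_\lambda\circ\Phi$ and using functoriality of the Lie functor together with $g^i_\lambda=(h^i_\lambda)'$ yields $\Phi'\circ g^1_\lambda=g^2_\lambda\circ\Phi'$, so $\Phi'$ is a VB-algebroid map. For the converse I would compare the two Lie-groupoid maps $\Phi\circ h^1_\lambda$ and $h^2_\lambda\circ\Phi$ from $\Gamma_1$ to $\Gamma_2$. By functoriality their derivatives are $\Phi'\circ g^1_\lambda$ and $g^2_\lambda\circ\Phi'$, which coincide by the assumed equivariance of $\Phi'$; since $\Gamma_1$ is source-connected, Remark~\ref{rem:sconnect} forces $\Phi\circ h^1_\lambda=h^2_\lambda\circ\Phi$ for all $\lambda$, so $\Phi$ is $\Rm$-equivariant and hence a VB-groupoid map.

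For part (b) I would combine (a) with Lie's second theorem. When $\Gamma_1$ is source-simply-connected (in particular source-connected), the Lie functor $\Phi\mapsto\Phi'$ is a bijection between Lie-groupoid maps $\Gamma_1\to\Gamma_2$ and Lie-algebroid maps $A_{\Gamma_1}\to A_{\Gamma_2}$. Part (a) shows that this bijection sends VB-groupoid maps precisely to VB-algebroid maps: it is injective on VB-groupoid maps, and any VB-algebroid map $\psi$ integrates by Lie's second theorem to a groupoid map $\Phi$ with $\Phi'=\psi$, which the converse in (a) shows to be a VB-groupoid map, yielding surjectivity. The only genuinely delicate point is this converse direction of (a), namely the passage from infinitesimal to global equivariance; this is exactly where source-connectedness is indispensable, and it is dispatched by applying Remark~\ref{rem:sconnect} to the pair $\Phi\circ h^1_\lambda$, $h^2_\lambda\circ\Phi$. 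Everything else is a formal consequence of functoriality of the Lie functor and the equivariance description of VB-maps.
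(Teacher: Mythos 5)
Your proposal is correct and follows essentially the same route as the paper's own proof: both reduce to the $\Rm$-equivariance characterizations of VB-groupoid and VB-algebroid maps (Theorems~\ref{thm:VB-gpd} and \ref{thm:VB-alg}), use the uniqueness of integration for source-connected domains (Remark~\ref{rem:sconnect}) for part (a), and invoke Lie's second theorem for part (b). You have merely spelled out in detail the equivariance comparison (between $\Phi\circ h^1_\lambda$ and $h^2_\lambda\circ\Phi$) that the paper leaves implicit.
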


\begin{proof}
Item $(a)$ follows from the characterization of VB-algebroid and
VB-groupoid maps by $\Rm$-equivariance, together with the uniqueness
of integration of Lie-algebroid maps when the domain is a
source-connected Lie groupoid, see Remark~\ref{rem:sconnect}. Part
$(b)$ is a direct consequence of Lie's second theorem.

\end{proof}

In general, given a source-simply-connected Lie groupoid $G$, Lie
subalgebroids of $A_G$ may not integrate to Lie subgroupoids of $G$,
see \cite{mm2}. We can obtain information about the integration of
VB-subalgebroids from the previous proposition:

\begin{corollary}\label{cor:VBsub}
Let $\Omega_1$ be a VB-subalgebroid of $\Omega_2$ defining, at the
level of basis algebroids, a Lie subalgebroid $(A_1\then
M_1)\hookrightarrow (A_2\then M_2)$. For $i=1,2$, let $\Gamma_i$ and
$G_i$ be source-simply-connected integrations of $\Omega_i$ and
$A_i$, respectively. Then $\Gamma_1$ is a VB-subgroupoid of
$\Gamma_2$ provided $G_1$ is a Lie subgroupoid of $G_2$.
\end{corollary}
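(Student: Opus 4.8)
The plan is to integrate the inclusion and recognize the resulting map as a subgroupoid inclusion. Write $\iota:\Omega_1\hookrightarrow\Omega_2$ for the VB-subalgebroid inclusion, which is in particular a VB-algebroid map, and $\iota_A:A_1\hookrightarrow A_2$ for the induced inclusion of base algebroids. By Theorem~\ref{thm:integration}, $\Gamma_1$ and $\Gamma_2$ are VB-groupoids over $G_1$ and $G_2$. Since $\Gamma_1$ is source-simply-connected, Proposition~\ref{prop:int.maps}(b) yields a unique VB-groupoid map $\Phi:\Gamma_1\to\Gamma_2$ with $\Phi'=\iota$. As $\Phi$ is $\Rm$-equivariant, its image is $\Rm$-invariant, so by Theorem~\ref{thm:VB-gpd} (an $\Rm$-invariant Lie subgroupoid of a VB-groupoid is again a VB-groupoid) it suffices to show that $\Phi$ is an injective immersion; it is automatically a groupoid morphism.

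First I would pin down the base map $\psi:=\Phi|_{G_1}:G_1\to G_2$, which is a Lie-groupoid map integrating $\iota_A$. Reading the hypothesis as the statement that the subalgebroid $A_1\subset A_2$ integrates to the subgroupoid $G_1\subset G_2$, the given inclusion $G_1\hookrightarrow G_2$ is a second Lie-groupoid map integrating $\iota_A$. Since $G_1$ is source-connected, the uniqueness in Remark~\ref{rem:sconnect} forces $\psi$ to coincide with this inclusion; in particular $\psi$ is an injective immersion. This is exactly where the hypothesis enters, and it is the crux of the argument: without knowing that $A_1$ integrates to a subgroupoid of $G_2$, the map $\psi$ could fail to be injective and $\Phi$ would not define a subgroupoid.

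Next I would check that $\Phi$ is fiberwise injective as a vector-bundle map over $\psi$. By Remark~\ref{rem:sscandmap}(b) it is enough to verify this on the side bundle and on the core of the VB-groupoids. The side bundle is the units bundle $E_i$, which is unchanged under differentiation, so the side component of $\Phi$ is literally that of $\iota$ and hence injective. For the cores, the Lie functor identifies the core of each VB-groupoid $\Gamma_i$ with the core of $\Omega_i$ --- this follows from the preservation of side bundles together with the behavior under duality in Remark~\ref{rmk:identifications}(b) --- and under this identification the core component of $\Phi$ agrees with that of $\iota$, which is again injective. Therefore $\Phi$ is fiberwise injective.

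Finally, a fiberwise injective vector-bundle morphism covering an injective immersion is itself an injective immersion: two points with equal image lie over the same point of $G_1$ (as $\psi$ is injective) and hence in one fiber, where fiberwise injectivity separates them; and the differential is injective because it is injective on base directions (as $\psi$ is an immersion) and on fiber directions (by fiberwise injectivity). Thus $\Phi$ is an injective immersion, and being an $\Rm$-equivariant groupoid morphism it identifies $\Gamma_1$ with an $\Rm$-invariant immersed Lie subgroupoid of $\Gamma_2$, i.e.\ a VB-subgroupoid. The main obstacle is the matching of base maps in the second step, which is the only place the subgroupoid hypothesis is used; the core identification in the third step is the other point requiring care.
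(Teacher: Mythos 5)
Your proposal is correct and follows the same overall strategy as the paper (integrate the inclusion by Lie's second theorem via Prop.~\ref{prop:int.maps}, use the subgroupoid hypothesis on the base to get injectivity there, and reduce the rest to a side/core check via Remark~\ref{rem:sscandmap}(b)), but the key technical step is organized differently. The paper applies Remark~\ref{rem:sscandmap}(b) not to $\Phi$ itself but to the tangent map $\dd\Phi: T\Gamma_1\to T\Gamma_2$ between the \emph{tangent} VB-groupoids $T\Gamma_i\toto TE_i$ over $\Gamma_i\toto E_i$: there the core is literally $A_{\Gamma_i}=\Omega_i$ and the side bundle is $TE_i$, so injectivity of $\Phi'=\iota$ and of $\dd\phi$ immediately gives that $\dd\Phi$ is fiberwise injective, i.e.\ that $\Phi$ is an immersion; fiberwise injectivity of $\Phi$ then follows by identifying the linear map $\Phi$ with the restriction of $\dd\Phi$ to vertical bundles, and injectivity follows from the base hypothesis. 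You instead apply the criterion directly to $\Phi:\Gamma_1\to\Gamma_2$, which forces you to identify the VB-groupoid core of $\Gamma_i$ with the double-vector-bundle core of $\Omega_i$ under the Lie functor and to know this identification is natural with respect to $\Phi$. That identification is true and your derivation of it (units of the dual plus Remark~\ref{rmk:identifications}(b)) is sound, but note that the naturality statement in Remark~\ref{rmk:identifications}(b) is only formulated for maps covering the identity on $G\toto M$, whereas your $\Phi$ covers $\psi\neq\id$; to close this you would either factor $\Phi$ through the pullback $\psi^*\Gamma_2$ and invoke Remark~\ref{rmk:identifications}(a), or verify naturality directly (e.g.\ the core identification is $c\mapsto \frac{d}{d\lambda}\big|_{\lambda=0}h_\lambda(c)$, the vertical lift restricted to the core, which is manifestly preserved by any $\Rm$-equivariant groupoid map). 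The paper's tangent-prolongation trick buys exactly the avoidance of this point. Two smaller remarks: your derivation runs in the opposite order (fiberwise injectivity $\Rightarrow$ immersion, rather than immersion $\Rightarrow$ fiberwise injectivity), which is equally valid; and your explicit identification of the base map $\psi$ with the given inclusion via uniqueness of integration (Remark~\ref{rem:sconnect}) spells out a step the paper leaves implicit in the phrase ``by assumption, $\Phi$ restricts to an injective map $G_1\to G_2$'' --- a genuine clarification.
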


\begin{proof}
Let $(\Phi,\phi): (\Gamma_1\toto E_1) \to (\Gamma_2\toto E_2)$ be
the groupoid map such that $\Phi': (\Omega_1\then E_1)
\to(\Omega_2\then E_2)$ is the subalgebroid inclusion. Since $\Phi'$
and $\dd \phi$ are injective on fibers, we see (from
Remark~\ref{rem:sscandmap}(b), applied to $\dd \Phi: T\Gamma_1\to
T\Gamma_2$) that $\Phi$ is an immersion. So it remains to check that
it is injective. By the previous proposition, $\Phi$ defines a
vector-bundle map $(\Gamma_1\to G_1) \to (\Gamma_2\to G_2)$, so it
can be identified with the restriction of its differential to the
vertical bundles. The immersion property then implies that $\Phi$ is
fibrewise injective. By assumption, $\Phi$ restricts to an injective
map $G_1\to G_2$, hence the result.
\end{proof}

This last corollary is used in the study of distributions \cite{jo}
and Dirac structures on Lie algebroids and groupoids \cite{ortiz}.

\begin{example}\label{dirac}
Let $A\then M$ be a Lie algebroid, and consider the VB-algebroid
$TA\oplus T^*A \then TM\oplus A^*$ over it. Let
 $L_A\hookrightarrow TA\oplus T^*A$ be a VB-subalgebroid over $A\then M$.
If $G$ is the source-simply-connected integration of $A$, then the
VB-groupoid $TG\oplus T^*G \toto TM\oplus A^*$ is the
source-simply-connected integration of $TA\oplus T^*A$. By
Corollary~\ref{cor:VBsub} the source-simply-connected integration of
$L_A$ defines a VB-subgroupoid $L_G\hookrightarrow TG\oplus T^*G$
over $G\toto M$. A special class of such VB-subalgebroids $L_A$ is
given by those which are, additionally, Dirac structures. In this
case, the VB-subgroupoids $L_G$ just defined are proven in
\cite[Thm.~5.2]{ortiz} to be Dirac structures as well.
\end{example}

% RUTH

We end this section with comments on the relation between
integration of VB-algebroids and representations up to homotopy.

\begin{remark}\label{int.ruth}
Representations up to homotopy of a Lie groupoid can be
differentiated to representations up to homotopy of its Lie
algebroid, see \cite{AS1}. The converse integration procedure is
considered in \cite{AS2}, but in a formal sense: representations up
to homotopy of a Lie algebroid are integrated to those of its
$\infty$-groupoid -- a global object associated to any Lie
algebroid. For an {\it integrable} Lie algebroid $A$, a natural
question is whether, or under which conditions, representations up
to homotopy integrate to those of its source-simply-connected Lie
groupoid $G$. Our result on integration of VB-algebroids provides
information about this problem: A representation of $A$ on $C \to E$
is integrable to one of $G$ if and only if the Lie algebroid
$\Omega= A\oplus E \oplus C\then E$ is integrable, where $\Omega$ is
the VB-algebroid corresponding to $C\to E$ (in the sense of the
results in \cite{GM,GM2} mentioned in Example~\ref{ex:reps} and
Remark~\ref{rmk:ruth}). For example, the adjoint representation of a
Lie algebroid is always integrable, and the representation up to
homotopy of $TM$ underlying Example~\ref{ex:nonint} is not
integrable (c.f. \cite[Prop~5.4]{AS2}). More on this topic can be
found in \cite{BrCO}.

%hasta aca AC

\end{remark}

%%%%%%%%%%%%%%%%%%%%%%%%%%%%%%%%%%%%%%%%%%%%%%%%%%%%%%%%%%%

\section{Applications to double Lie algebroids}\label{sec:DLA}

In this section, following our previous results on VB-algebroids and
VB-groupoids, we discuss the Lie theory relating more general
objects, known as {\em double Lie algebroids} and {\em LA-groupoids}
\cite{mac-double1,mac-double2,mac-crelle}. Rather than treating
these objects directly, our approach is to focus on the dual
picture, in which we trade Lie algebroids for linear Poisson
structures. % (as recalled in the end of Section~\ref{subsec:linear}).
From this viewpoint, the objects to be considered are
VB-algebroids and VB-groupoids endowed with a compatible
Poisson structure, and our main goal is to extend our integration
result in Thm.~\ref{thm:integration} to this setting.

From an alternative perspective, following Theorems \ref{thm:VB-gpd}
and \ref{thm:VB-alg}, we will be considering regular actions on
objects known as {\em Poisson groupoids} and  {\em Lie bialgebroids}
\cite{Mac-Xu,We88}. We start the section by briefly discussing them.

%In this last section we consider the integration of double Lie
%algebroids to LA-groupoids building on our previous results. After
%recalling a few definitions and proving a new subsidiary result
%about integration of morphisms between Lie bialgebroids, first we
%show that double Lie algebroids are in duality with VB-algebroids
%endowed with an additional compatible Poisson structure. Next, we
%refine our previous Lie theoretic results to incorporate this extra
%Poisson geometry. Finally, we show that the integrating VB-groupoids
%with compatible Poisson structure are in duality with so-called
%LA-groupoids.

%%%%%%%%%%%%%%%%%%%%%%%%%%%%%%%%%%

\subsection{Interlude: Lie bialgebroids and Poisson groupoids}

% definition

A {\em Lie bialgebroid} is a pair of Lie-algebroid structures
$A\then M$ and $A^*\then M$ which are compatible in the sense that
$$
d_*[X,Y]=[d_*X,Y]+[X,d_*Y] \qquad \forall X, Y
\in\Gamma(\wedge^\bullet A),
$$
where $d_*$ is the differential in $\Gamma(\wedge^\bullet A)$
induced by the bracket of $A^*$, and $[\cdot,\cdot]$ denotes the
Schouten bracket induced by the bracket of $A$. One may verify that
the notion of Lie bialgebroid is symmetric in $A$ and $A^*$, see
e.g. \cite[Sec.~12.1]{Mac-book} for details.

% alternative definition

By the duality between Lie-algebroid structures and linear Poisson
structures (see Section~\ref{subsec:linear}), a Lie bialgebroid is
the same as a Lie algebroid $A\then M$ equipped with a linear
Poisson structure $\pi$ on $A$ satisfying the following
compatibility condition {\cite{Mac-Xu}: the associated bundle map
$\pi^\#:(T^*A\to A) \to (TA\to A)$ is a Lie algebroid map with
respect to the tangent and cotangent Lie algebroids, $TA\then TM$
and $T^*A\then A^*$, see Example~\ref{ex:tangcot}. In other words,
$\pi$ defines a VB-algebroid map
\begin{equation}\label{eq:anchorbialg}
\begin{matrix}
T^*A & \then & A^*\\
\downarrow & & \downarrow\\
A& \then & M
\end{matrix}
\qquad\xto{\pi^\#}\qquad
\begin{matrix}
TA & \then & TM\\
\downarrow & & \downarrow\\
A& \then & M.
\end{matrix}
\end{equation}
We will denote Lie bialgebroids by pairs $(A\then M,\pi)$. A map of
Lie bialgebroids is a map of Lie algebroids which is also a Poisson
map.

%This can be expressed by means of the Poisson-anchor map as follows
%(cf. {\cite[12.2.1]{Mac-book}). Let $A\then M$ be a Lie algebroid
%and $\pi$ a linear Poisson structure on $A$. Then $(A\then M,\pi)$
%is a Lie bialgebroid if and only if $\pi^\#:T^*A\to TA$ is a Lie
%algebroid map (over some map $A^*\to TM$). Here $T^*A$ and $TA$ have
%the Lie algebroid structures of the cotangent and tangent bundles of
%$A$. Under the reversal isomorphism $T^*A\cong T^*A^*$, the
%structure on $T^*A$ is the one induced by the linear Poisson
%structure of $A^*$.

% the Poisson-anchor of a bialgebroid
%\begin{proposition} In a Lie bialgebroid $(A\then M,\pi)$ the map
%$\pi^\#:T^*A\to TA$ is a map of VB-algebroids.

% examples

If $A\then M$ is a Lie algebroid, then it becomes a bialgebroid with
the trivial Poisson structure. The following are less trivial
examples.

\begin{example}\label{ex:bialg}
Lie bialgebras are Lie bialgebroids over a point. Other examples are
associated with Poisson manifolds $(P,\pi)$: the tangent-lift
$\pi_T$ on $TP$, corresponding to the Lie algebroid structure on
$T^*P$, makes $(TP\then P,\pi_T)$ into a bialgebroid.

%In this case $d_*=[\pi,\cdot]$ and the compatibility condition
%follows from the graded Jacobi identity for the Schouten bracket on multivector fields.
\end{example}

% Poisson groupoids

The global counterparts of Lie bialgebroids are Poisson groupoids
\cite{Mac-Xu2}. A {\it Poisson groupoid} \cite{Mac-Xu,We88} is a Lie
groupoid $G\toto M$ equipped with a Poisson structure $\pi$ which is
{\it multiplicative}, in the sense that the bundle map $\pi^\#: T^*G
\to TG$ is a Lie-groupoid map. In other words, $\pi$ gives rise to a
VB-groupoid map
\begin{equation}\label{eq:panchorgpd}
\begin{matrix}
T^*G & \toto & A^*\\
\downarrow & & \downarrow\\
G& \toto & M
\end{matrix}
\qquad\xto{\pi^\#}\qquad
\begin{matrix}
TG & \toto & TM\\
\downarrow & & \downarrow\\
G& \toto & M.
\end{matrix}
\end{equation}
A map of Poisson groupoids is a Lie-groupoid map that is also a Poisson map.

\begin{example}
Every Poisson-Lie group is a Poisson groupoid with a single object,
and every symplectic groupoid \cite{CDW} is a Poisson groupoid with
non-degenerate Poisson structure. These are two fundamental families
of examples.
\end{example}

% derivation

If $(G\toto M,\pi_G)$ is a Poisson groupoid, then its Lie algebroid
$A_G\then M$ inherits a Poisson structure $\pi_A$ making it into a
Lie bialgebroid. One may obtain $\pi_A$ from $\pi_G$ as follows: by
applying the Lie functor to $\pi_G^\#$ and using the canonical
isomorphisms $j_G: TA_G\to A_{TG}$ and $\theta_G: A_{T^*G} \to
T^*A_G$ (see Remark~\ref{rmk:identifications}(c)), we define the
Poisson bivector $\pi_A$ on $A_G$ by
\begin{equation}\label{eq:piA}
(\pi_G^\#)'=j_G\circ \pi_A^\# \circ \theta_G.
\end{equation}

% integration

There is also an integration procedure going from Lie bialgebroids
to Poisson groupoids \cite{Mac-Xu2}: if $(A\then M,\pi_A)$ is a Lie
bialgebroid and $G\toto M$ is a source-simply connected Lie groupoid
integrating $A\then M$, then there exists a unique Poisson structure
$\pi_G$ on $G$ that makes it into a Poisson groupoid and induces
$\pi_A$ via \eqref{eq:piA}. This follows from Lie's second theorem:
$\pi_G$ is obtained by integrating the Lie algebroid map
$A_{T^*G}\to A_{TG}$ defined by the right-hand-side of
\eqref{eq:piA}.

%Lie groupoid and induces the original bialgebroid structure. This
%follows from Lie's Second Theorem for algebroid, by integrating the
%Poisson-anchor $\pi_A^\#$ to a map that, upon identifying
%$A_{TG}\cong TA_G$ and $A_{T^*G}\cong T^*A_G$ in the canonical way,
%gives the desired Poisson bivector.

% integration of maps

We conclude this discussion with a direct proof of the integration
of Lie-bialgebroid maps, which completes the partial result in
\cite[Thm.~5.5.]{xu}:

\begin{proposition}\label{bialgmap}
Let $(G_i\toto M_i,\pi_{G_i})$ be Poisson groupoids, with Lie
bialgebroids $(A_i\then M_i, \pi_{A_i})$, $i=1,2$.
\begin{enumerate}[(a)]
\item Let $\Phi: G_1\to G_2$ be a Lie groupoid map. If it is a
map of Poisson groupoids, then $\Phi'$ is a map of Lie bialgebroids,
%$(A_1\then M_1, \pi_{A_1}) \to (A_2\then M_2,\pi_{A_2})$,
and the converse holds if $G_1$ is source-connected;
\item
When $G_1$ is source-simply-connected, any Lie-bialgebroid map
$A_1\to A_2$ integrates to a unique map of Poisson groupoids $G_1\to
G_2$.
\end{enumerate}
\end{proposition}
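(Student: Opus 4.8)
The plan is to reduce part (b) to part (a) and to prove (a) by encoding the Poisson condition as an equality of VB-groupoid maps over $G_1$, which then transfers across the Lie functor. For (b) I would argue as follows: a Lie-bialgebroid map $\phi\colon A_1\to A_2$ is in particular a Lie-algebroid map, so since $G_1$ is source-simply-connected, Lie's second theorem yields a unique Lie-groupoid map $\Phi\colon G_1\to G_2$ with $\Phi'=\phi$; the converse direction of (a) (available because $G_1$ is source-connected) then promotes $\Phi$ to a map of Poisson groupoids, and uniqueness is again Lie's second theorem. Everything therefore rests on (a), whose content is purely about the bivectors, as $\Phi$ and $\Phi'$ are already groupoid resp.\ algebroid maps.

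For (a) my starting point is the standard characterization that a Lie-groupoid map $\Phi$ is Poisson precisely when, pointwise, $\pi_{G_2}^\# = \dd\Phi\circ\pi_{G_1}^\#\circ(\dd\Phi)^*$. Pulling everything back to $G_1$, I would rewrite this as a single equation of vector-bundle maps over $G_1$,
$$\dd\Phi\circ\pi_{G_1}^\#\circ(\dd\Phi)^* = \Phi^*\pi_{G_2}^\# \colon \Phi^*T^*G_2\to\Phi^*TG_2,$$
and then check that every factor is in fact a VB-groupoid map over the identity of $G_1$. Indeed, $\Phi^*T^*G_2$ and $\Phi^*TG_2$ are VB-groupoids over $G_1$ by Corollary~\ref{cor:VB-gpd}(b); the map $\dd\Phi\colon TG_1\to TG_2$ covers $\Phi$, so by Remark~\ref{rem:pbgrp} it factors through a VB-groupoid map $TG_1\to\Phi^*TG_2$ over the identity, whose dual $(\dd\Phi)^*\colon\Phi^*T^*G_2\to T^*G_1$ is again such a map because base change commutes with duals; and $\pi_{G_1}^\#$ is a VB-groupoid map by multiplicativity of $\pi_{G_1}$. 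Thus both sides are VB-groupoid maps over $G_1$ between the same VB-groupoids, and the identical bookkeeping on the infinitesimal side shows that $\Phi'$ is Poisson if and only if the analogous equation $\dd\Phi'\circ\pi_{A_1}^\#\circ(\dd\Phi')^* = (\Phi')^*\pi_{A_2}^\#$ of VB-algebroid maps over $A_1$ holds.

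The two equations are then to be matched through the Lie functor \eqref{eq:lieFVB}. I would differentiate each factor and invoke the natural identifications to see that the differentiation of the groupoid equation is exactly the algebroid equation: \eqref{eq:piA} handles $\pi_{G_i}^\#$, the naturality relations \eqref{eq:Tj} handle $\dd\Phi$ and $(\dd\Phi)^*$, and the compatibility of the Lie functor with pullbacks and duals (Remark~\ref{rmk:identifications}(a),(b),(c)) handles the remaining terms. Since the Lie functor preserves equalities unconditionally, this gives the forward direction of (a). For the converse I would note that both sides of the groupoid equation are VB-groupoid maps $\Phi^*T^*G_2\to\Phi^*TG_2$ over $G_1$ whose differentiations agree (by the algebroid equation, which holds by hypothesis); since $G_1$ is source-connected, so is the total groupoid $\Phi^*T^*G_2$ (Remark~\ref{rem:sscandmap}(a)), and the injectivity of the Lie functor on maps out of a source-connected groupoid (Remark~\ref{rem:sconnect}, cf.\ Prop.~\ref{prop:int.maps}(a)) then forces the two sides to coincide, i.e.\ $\Phi$ is Poisson.

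The hard part will be the bookkeeping in the middle step. Writing the Poisson condition as an equation of honest VB-groupoid maps over the identity of $G_1$ forces one to pass $(\dd\Phi)^*$ through base-change duality, because $\dd\Phi$ covers $\Phi$ rather than the identity; and confirming that the differentiated groupoid equation is literally the Poisson condition for $\Phi'$ requires carefully tracking the canonical transformations $j_{G_i}$, $\theta_{G_i}$ and the pullback/duality identifications of Remark~\ref{rmk:identifications} through every factor. Once this identification is set up, the remaining logic --- functoriality for the forward direction, source-connected injectivity for the converse --- is routine.
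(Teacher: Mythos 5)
Your proposal is correct and follows essentially the same route as the paper's proof: both reduce (b) to (a) via Lie's second theorem, recast the Poisson condition for $\Phi$ as an equality of VB-groupoid maps $\Phi^*T^*G_2\to\Phi^*TG_2$ over $G_1$, differentiate it using \eqref{eq:piA}, the naturality relations \eqref{eq:Tj}, and the pullback/duality identifications of Remark~\ref{rmk:identifications}, and then conclude the equivalence from source-connectedness of $\Phi^*T^*G_2$ together with the injectivity of the Lie functor on maps out of source-connected groupoids (Remark~\ref{rem:sconnect}). The "hard bookkeeping" step you flag is precisely the computation \eqref{eq:pmap} carried out in the paper.
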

\begin{proof}
Notice that for $(a)$, it is enough to assume that $G_1$ is
source-connected and show that $\Phi$ is a Poisson map with respect
to $\pi_{G_1}$ and $\pi_{G_2}$ if and only if $\Phi'$ is a Poisson
map relative to $\pi_{A_1}$ and $\pi_{A_2}$.

From \eqref{eq:piA}, we know that
\begin{equation}\label{eq:piA2}
(\pi_{G_2}^\#)' = j_{G_2}\circ \pi_{A_2}^\# \circ \theta_{G_2}.
\end{equation}

By functoriality of pullbacks (see Remarks~\ref{rem:pbgrp} and
\ref{rem:pbalg}), $\pi_{G_2}^\#: T^*G_2\to TG_2$ gives rise to a
VB-groupoid map $\Phi^*T^*G_2 \to \Phi^* TG_2$, that we keep
denoting by $\pi_{G_2}^\#$. With this simplified notation, we write
$(\pi_{G_2}^\#)': (\Phi')^* A_{T^*G_2} \to (\Phi')^* A_{TG_2}$, see
Remark~\ref{rmk:identifications}(a). Similarly, one can apply the
pullback functor $(\Phi')^*$ to all maps on the right-hand side of
\eqref{eq:piA2}, and view \eqref{eq:piA2} as an equality of
VB-algebroid maps $(\Phi')^* A_{T^*G_2} \to (\Phi')^* A_{TG_2}$.

Consider the tangent VB-groupoid map $\dd\Phi: TG_1\to \Phi^*TG_2$,
and its dual $(\dd\Phi)^*: \Phi^*T^*G_2 \to T^*G_1$. Differentiating
the composition $\dd\Phi \circ \pi_{G_1}^\# \circ (\dd\Phi)^*:
\Phi^*T^*G_2\to \Phi^*TG_2$ leads to a VB-algebroid map
\begin{align}\label{eq:pmap}
(\dd\Phi \circ \pi_{G_1}^\# \circ (\dd\Phi)^*)' & =(\dd\Phi)'\circ
j_{G_1}\circ \pi_{A_1}^\# \circ \theta_{G_1} \circ ((\dd\Phi)^*)' \\
& = j_{G_2}\circ \dd(\Phi') \circ \pi_{A_1}^\# \circ (\dd\Phi')^*
\circ \theta_{G_2}, \nonumber
\end{align}
where we have used \eqref{eq:piA} and Remark~\ref{rmk:identifications}(c).

Note that $\Phi$ is a Poisson map if and only if
$$
\pi_{G_2}^\# = \dd\Phi \circ \pi_{G_1}^\# \circ (\dd\Phi)^*,
$$
as an equality of maps $\Phi^*T^*G_2 \to \Phi^* TG_2$. Both maps are
Lie-groupoid maps and $\Phi^* T^*G_2$ is source-connected (since it
is a VB-groupoid over $G_1$, and $G_1$ is assumed to be
source-connected, see Remark~\ref{rem:sscandmap}(a)). By the
uniqueness result in Remark~\ref{rem:sconnect}, the last equation
holds if and only if
$$
(\pi_{G_2}^\#)' = (\dd\Phi \circ \pi_{G_1}^\# \circ (\dd\Phi)^*)'.
$$
Comparing with \eqref{eq:piA2} and \eqref{eq:pmap}, we see that this is equivalent to
$$
\pi_{A_2}^\# = \dd(\Phi') \circ \pi_{A_1}^\# \circ (\dd \Phi')^*,
$$
as an equality of maps $(\Phi')^*T^*A_2 \to (\Phi')^* TA_2$, which
is the condition for $\Phi'$ being a Poisson map.

Finally, part $(b)$ is an immediate consequence of $(a)$.
\end{proof}

An alternative approach to this last result can be found in
\cite[Sec.~1.5]{luca2}.

%%%%%%%%%%%%%%%%%%%%%%%%%%%%%%%

\subsection{LA-groupoids and double Lie algebroids: the dual viewpoint}\label{DLA}

We now consider certain generalizations of VB-algebroids and
VB-groupoids, in which the vector-bundle structures are enhanced to
be Lie algebroids.

% LA-groupoids

An {\em LA-groupoid} \cite{mac-double1} consists of a VB-groupoid
$\Gamma \toto E$ over $G\toto M$, along with Lie algebroid
structures $\Gamma\then G$ and $E\then M$, satisfying compatibility
conditions saying that the groupoid structure maps are Lie-algebroid
morphisms (an alternative definition will be given below). We depict
an LA-groupoid as
\begin{equation}\label{eq:LAgrp}
\begin{matrix}
\Gamma & \toto & E\\
\Downarrow & & \Downarrow\\
G& \toto & M.
\end{matrix}
\end{equation}

The duality between Lie algebroids and linear Poisson structures
provides an alternative viewpoint to LA-groupoids in terms of their
duals, that we now recall.

% duality at the global level: PVB-groupoid

A {\em PVB-groupoid} \cite{mk2} consists of a VB-groupoid
$\Gamma\toto E$ over $G\toto M$ and a Poisson structure $\pi$ on
$\Gamma$ which is multiplicative (i.e., $(\Gamma,\pi)$ is a Poisson
groupoid) and linear with respect to $\Gamma\to G$. PVB-groupoids
will be written as
$$
\left(\begin{matrix}
\Gamma & \toto & E\\
\downarrow & & \downarrow\\
G& \toto & M
\end{matrix},\pi\right)
$$

As proven in \cite[Thm.~3.14]{mk2}, the compatibility conditions
relating the groupoid and algebroid structures on an LA-groupoid
\eqref{eq:LAgrp} are equivalent to saying that the dual VB-groupoid
$\Gamma^*\toto C^*$ over $G\toto M$ is a PVB-groupoid with respect
to the linear Poisson structure dual to $\Gamma\then G$. So, through
VB-groupoid duality, one obtains a one-to-one correspondence between
LA-groupoids and PVB-groupoids.

% example

\begin{example}\label{ex:PVBg}
Examples of PVB-groupoids include, e.g., the cotangent groupoid of
any Lie groupoid (equipped with the Poisson structure of its
canonical symplectic form) as well as the tangent groupoids to
Poisson groupoids, equipped with the tangent-lift Poisson structure
(see Remark~\ref{rem:poisson}).
\end{example}

% characterization by actions

One advantage of passing from LA-groupoids to PVB-groupoids is that
the latter admit a simple characterization in terms of
$\Rm$-actions, resulting from Prop.~\ref{linear} and
Theorem~\ref{thm:VB-gpd}:

\begin{proposition}\label{prop:pvb-gr}
A PVB-groupoid is the same as a Poisson groupoid $(\Gamma\toto
E,\pi)$ equipped with a regular action $h: \Rm\action (\Gamma\toto
E)$ such that $h_\lambda:(\Gamma\toto E,\pi)\to (\Gamma\toto
E,\lambda\pi)$ is a map of Poisson groupoids for all $\lambda\neq
0$.
\end{proposition}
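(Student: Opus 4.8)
The plan is to unwind the definition of PVB-groupoid from the previous paragraph and match it against the two pieces of data in the statement, using the characterizations of VB-groupoids and of linear Poisson structures that were already established. Recall that a PVB-groupoid consists of a VB-groupoid $\Gamma\toto E$ over $G\toto M$ together with a multiplicative Poisson structure $\pi$ on $\Gamma$ which is, in addition, linear with respect to the vector-bundle fibration $\Gamma\to G$. The assertion is that this package is the same as a Poisson groupoid $(\Gamma\toto E,\pi)$ equipped with a regular $\Rm$-action $h$ for which each $h_\lambda$ is a Poisson-groupoid map $(\Gamma\toto E,\pi)\to(\Gamma\toto E,\lambda\pi)$ (for $\lambda\neq 0$). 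So the proof is essentially a dictionary translation, carried out one structure at a time.

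First I would observe that the multiplicativity of $\pi$ is common to both descriptions and requires no translation: in both formulations we are handed a Poisson groupoid $(\Gamma\toto E,\pi)$. What remains is to reconcile the \emph{linearity of $\pi$ with respect to $\Gamma\to G$} on the one hand with the existence of a \emph{regular $\Rm$-action $h$ whose homotheties rescale $\pi$} on the other. This is exactly where I would invoke the two foundational results of Section~\ref{sec:regactions} and Section~\ref{subsec:linear}. By Theorem~\ref{thm:VB-gpd}, a VB-groupoid structure on $\Gamma\toto E$ over $G\toto M$ is precisely the same datum as a regular multiplicative action $h:\Rm\action(\Gamma\toto E)$; the underlying vector bundle $\Gamma\to G$ is then recovered as the one whose homotheties are the $h_\lambda$, with $G=h^\Gamma_0(\Gamma)$ and $M=h^E_0(E)$. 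This gives a bijective correspondence between the VB-groupoid datum and the regular-action datum.

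Next, with the regular action $h$ in hand as the vector-bundle structure on $\Gamma\to G$, I would apply Proposition~\ref{linear} to translate the remaining condition. Proposition~\ref{linear} states that, for a vector bundle with regular action $h$, a Poisson bivector $\pi$ is linear with respect to that vector-bundle structure if and only if $h_\lambda:(\Gamma,\pi)\to(\Gamma,\lambda\pi)$ is a Poisson map for all $\lambda\neq 0$. Applying this verbatim to the fibration $\Gamma\to G$ identifies ``$\pi$ linear relative to $\Gamma\to G$'' with ``each $h_\lambda$ rescales $\pi$ by $\lambda$.'' Combining the two translations, a PVB-groupoid (multiplicative plus linear $\pi$) corresponds exactly to a Poisson groupoid together with a regular multiplicative action each of whose homotheties is a Poisson map rescaling $\pi$ by the factor $\lambda$.

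The only point requiring a small amount of care — and the step I expect to be the main (if modest) obstacle — is checking that the two notions of ``map rescaling $\pi$'' refer to the same maps, i.e.\ that the regularity/multiplicativity packaged in Theorem~\ref{thm:VB-gpd} and the Poisson-rescaling packaged in Proposition~\ref{linear} can be imposed simultaneously and consistently. Concretely, the hypothesis that each $h_\lambda$ be a map of \emph{Poisson groupoids}, as stated, bundles together two requirements: that $h_\lambda$ be a groupoid map (which is multiplicativity of the action, hence the VB-groupoid condition via Theorem~\ref{thm:VB-gpd}) and that $h_\lambda$ be Poisson for $\pi\mapsto\lambda\pi$ (which is linearity of $\pi$ relative to $\Gamma\to G$ via Proposition~\ref{linear}). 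Once one notes that these two conditions are logically independent and that each is handled by its own cited result, the equivalence follows by simply conjoining them; there is no interaction term to verify beyond observing that both conditions are about the \emph{same} family of maps $h_\lambda$. I would therefore phrase the proof as: the VB-groupoid structure $\Leftrightarrow$ regular multiplicative $h$ (Theorem~\ref{thm:VB-gpd}); given this, linearity of $\pi$ along $\Gamma\to G$ $\Leftrightarrow$ each $h_\lambda$ is Poisson for $\pi\mapsto\lambda\pi$ (Proposition~\ref{linear}); and multiplicativity of $\pi$ is the shared hypothesis that $(\Gamma\toto E,\pi)$ is a Poisson groupoid. Conjoining these yields the claim.
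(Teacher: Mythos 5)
Your proposal is correct and matches the paper's own treatment: the paper gives no separate proof, presenting the proposition precisely as the conjunction of Theorem~\ref{thm:VB-gpd} (VB-groupoid structure $\Leftrightarrow$ regular multiplicative action) and Proposition~\ref{linear} (linearity of $\pi$ over $\Gamma\to G$ $\Leftrightarrow$ each $h_\lambda:(\Gamma,\pi)\to(\Gamma,\lambda\pi)$ is Poisson), exactly as you argue. The only point worth adding is that multiplicativity of $h$ at $\lambda=0$ follows from the $\lambda\neq 0$ hypothesis by passing to the limit, as the paper notes in the analogous algebroid argument.
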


% DLA

We now consider the analogous infinitesimal objects. A {\em double
Lie algebroid} \cite{mac-crelle} consists of a VB-algebroid
$\Omega\then E$ over $A\then M$ equipped with additional Lie
algebroid structures $\Omega\then A$ and $E\then M$, depicted
\begin{equation}\label{eq:DLA}
\begin{matrix}
\Omega & \then & E\\
\Downarrow & & \Downarrow\\
A& \then & M,
\end{matrix}
\end{equation}
satisfying the following conditions:
\begin{enumerate}[(i)]
 \item $\Omega\then A$ is a VB-algebroid over $E\then M$,
 \item the Lie-algebroid structure on the vertical dual $\Omega^*_A\then C^*$
(see \eqref{eq:dualalg}), together with the linear Poisson structure
$\pi_A$ induced by $\Omega\then A$, define a Lie bialgebroid.
\end{enumerate}
Note that (ii) can be equivalently stated in terms of $\Omega_E^*\to
C^*$, interchanging the roles of vertical and horizontal
VB-algebroids in \eqref{eq:DLA}.

% definition of PVB-algebroid

Once again, it will be profitable to make use of the duality between
Lie algebroids and linear Poisson structures and pass to the dual
picture.

A {\em PVB-algebroid} consists of a VB-algebroid and a Poisson
structure $\pi$ on the total space $\Omega$ which is linear with
respect to $\Omega \to A$ and such that $(\Omega\then E,\pi)$ is a
Lie bialgebroid. We will use the notation
$$\left(\begin{matrix}
\Omega & \then & E\\
\downarrow & & \downarrow\\
A& \then & M
\end{matrix},\pi\right).
$$

% duality

One can directly check that vertical duality of VB-algebroids
establishes a one-to-one correspondence between double Lie
algebroids and PVB-algebroids,
\begin{equation}\label{eq:1-1pvb}
\begin{matrix}
\Omega & \then & E\\
\Downarrow & & \Downarrow\\
A & \then & M
\end{matrix}
\qquad \stackrel{1:1}{\longleftrightarrow} \qquad
\left(\begin{matrix}
\Omega^*_A & \then & C^*\\
\downarrow& & \downarrow\\
A & \then & M
\end{matrix},\pi_A\right),
\end{equation}
analogously to what happens for LA-groupoids and PVB-groupoids
\cite[Thm.~3.14]{mk2}.

\begin{remark}
From the duality properties of VB-algebroids and the fact that Lie
bialgebroids are self-dual, one sees that both the horizontal and
vertical duals of a double Lie algebroid are PVB-algebroids. Thus,
while the vertical dual of a PVB-algebroid is a double Lie
algebroid, its horizontal dual is again a PVB-algebroid.
\end{remark}

% example

\begin{example}
Analogously to Example~\ref{ex:PVBg}, one can see that the cotangent
Lie algebroid of any Lie algebroid is naturally a PVB-algebroid
(with respect to the canonical symplectic structure). The tangent
Lie algebroid to any Lie bialgebroid $(A,\pi)$ is a PVB-algebroid,
with Poisson structure given by the tangent lift of $\pi$.
\end{example}

% characterization by actions

Just as PVB-groupoids, PVB-algebroids admit a simple description in
terms of regular actions, following Prop.~\ref{linear} and
Theorem~\ref{thm:VB-alg}:

\begin{proposition}\label{thmpvbalg}
PVB-algebroids are equivalently described as Lie bialgebroids
$(\Omega \then E,\pi)$ endowed with a regular IM-actions $h:\Rm
\action (\Omega \then E)$ such that $h_\lambda:(\Omega \then
E,\pi)\to(\Omega \then E,\lambda\pi)$ is a Poisson map $\forall
\lambda\neq 0$.
\end{proposition}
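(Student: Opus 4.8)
The plan is to mirror the proof strategy already used for PVB-groupoids in Proposition~\ref{prop:pvb-gr}, transporting it to the infinitesimal setting by systematically replacing multiplicative actions with IM actions, Theorem~\ref{thm:VB-gpd} with Theorem~\ref{thm:VB-alg}, and the notion of Poisson groupoid map with that of Lie bialgebroid map. The statement is an ``if and only if'' between two packages of data, so I would establish the equivalence by unwinding both definitions and matching them piece by piece.

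First I would take a PVB-algebroid, i.e.\ a VB-algebroid $\Omega\then E$ over $A\then M$ together with a Poisson structure $\pi$ on $\Omega$ that is (i)~linear with respect to $\Omega\to A$ and (ii)~makes $(\Omega\then E,\pi)$ a Lie bialgebroid. By Theorem~\ref{thm:VB-alg}, the VB-algebroid structure is the same datum as a regular IM action $h:\Rm\action(\Omega\then E)$, where the fixed-point subalgebroid recovers $A\then M$ (Lemma~\ref{lem:H0_alg}). The remaining content is condition~(i), linearity of $\pi$ with respect to $\Omega\to A$. The crucial step is to apply Proposition~\ref{linear} to the bundle $\Omega\to A$ with its regular action $h$: linearity of $\pi$ relative to $\Omega\to A$ is \emph{exactly} the assertion that $h_\lambda:(\Omega,\pi)\to(\Omega,\lambda\pi)$ is a Poisson map for every $\lambda\neq 0$. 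Thus once we know $(\Omega\then E,\pi)$ is a Lie bialgebroid, saying each $h_\lambda$ is a \emph{map of Lie bialgebroids} $(\Omega\then E,\pi)\to(\Omega\then E,\lambda\pi)$ amounts to saying $h_\lambda$ is both a Lie-algebroid map (which is the IM condition, already encoded in $h$) and a Poisson map relative to the rescaled structures (which is condition~(i) by Proposition~\ref{linear}). Conversely, given a Lie bialgebroid $(\Omega\then E,\pi)$ with a regular IM action $h$ such that each $h_\lambda$ is Poisson for $\lambda\neq 0$, Theorem~\ref{thm:VB-alg} reconstructs the VB-algebroid and Proposition~\ref{linear} upgrades the Poisson condition back into linearity along $\Omega\to A$, yielding a genuine PVB-algebroid.

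The one point requiring care—and what I expect to be the main obstacle—is the bookkeeping of \emph{which} base is relevant at each stage. In a PVB-algebroid there are two algebroid structures in play, $\Omega\then E$ (carrying the bialgebroid/Poisson data) and the one encoded by $h$ via $\Omega\to A$; Proposition~\ref{linear} is applied to the \emph{latter} projection, whereas the bialgebroid condition lives over $E$. I would be explicit that the IM action $h$ acts by Lie-algebroid automorphisms of $\Omega\then E$, so the bialgebroid base $E$ is preserved, and that the linearity being tested is along the transverse direction $\Omega\to A$. A subtlety is that ``$h_\lambda$ is a map of Lie bialgebroids'' combines \emph{two} compatibilities—the Lie-algebroid morphism property and the Poisson property—and I must confirm the former is automatically supplied by the IM hypothesis, so that only the Poisson condition carries nontrivial information. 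Once this separation is clean, the equivalence follows directly.

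Finally I would note the continuity remark used throughout the paper: as in the proof of Theorem~\ref{thm:VB-alg}, it suffices to impose the Poisson condition for $\lambda\neq 0$, the case $\lambda=0$ following by passing to the limit. This keeps the statement aligned with Proposition~\ref{prop:pvb-gr} and with the repeated convention that equivariance and related conditions need only be checked away from $\lambda=0$. The proof is therefore short and essentially a translation, its substance being the two invocations of Proposition~\ref{linear} and Theorem~\ref{thm:VB-alg}.
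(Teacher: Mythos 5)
Your proposal is correct and follows exactly the route the paper intends: the paper states Proposition~\ref{thmpvbalg} as a direct consequence of Proposition~\ref{linear} and Theorem~\ref{thm:VB-alg}, and your argument is precisely that unwinding---Theorem~\ref{thm:VB-alg} converts the VB-algebroid structure into the regular IM action, Proposition~\ref{linear} applied to $\Omega\to A$ identifies linearity of $\pi$ with the Poisson condition on each $h_\lambda$, $\lambda\neq 0$, and the bialgebroid condition over $E$ is carried across unchanged. Your explicit bookkeeping of the two bases ($E$ for the bialgebroid data, $A$ for the linearity being tested) is exactly the right point to make careful, and nothing in your write-up deviates from or adds a gap to the paper's argument.
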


The characterizations of PVB-algebroids and PVB-groupoids in
Props.~\ref{prop:pvb-gr} and \ref{thmpvbalg} will be useful in
describing their relation via differentiation and integration.

% characterization by bivector map

\begin{remark}
PVB-groupoids and PVB-algebroids can be characterized by means of
their Poisson-anchor maps $\pi^\#$. Indeed, tangent and cotangent
bundles of VB-groupoids and VB-algebroids inherit {\em triple
structures}, which can be encoded in cubical diagrams as the
cotangent cube \eqref{cot.cub} (see \cite[Fig.~5]{mk2}). Combining
\eqref{eq:lps}, \eqref{eq:anchorbialg} and \eqref{eq:panchorgpd},
one can see that a Poisson structure defines a PVB-groupoid or a
PVB-algebroid if and only if the map $\pi^\#$ preserves the
structure of the underlying cubes. This characterization for
PVB-algebroids is essentially \cite[Thm.~3.9]{mac-crelle}, and as
explained there, it implies that $\pi^\#$ automatically yields an
algebroid map $(\Omega_A^*\then C^*)\to(TA\then TM)$, simplifying
some redundacy in the original definition of double Lie algebroids,
see e.g. \cite[Sec.~2]{dla}.
\end{remark}

%%%%%%%%%%%%%%%%%%%%%%%%%%%%%%%%%%%%%%%%%%%%%%%%%%%%%%%%%%%%%%%%%%%%%%%%%%%%%%%%%%
\subsection{Lie theory}

We finally explain how double Lie algebroids are related to
LA-groupoids under differentiation and integration. We will do so by
first studying the dual picture, i.e., the Lie theory relating
PVB-algebroids and PVB-groupoids.

% Lie theory for PVB

%The relationship between PVB-groupoids and PVB-algebroids is clarified by the next result.

\begin{proposition}\label{prop:PVBlie}
Consider a VB-groupoid $\Gamma\toto E$ over $G\toto M$, and let
$\pi$ be a multiplicative Poisson structure on $\Gamma$. Consider
the corresponding VB-algebroid $A_\Gamma\then E$ over $A_G\then M$
and Lie bialgebroid $(A_\Gamma,\pi_{A_\Gamma})$. If $\pi$ is linear
with respect to $\Gamma\to G$, then $\pi_{A_\Gamma}$ is linear with
respect to $A_\Gamma\to A_G$, and the converse holds provided
$\Gamma\toto E$ is source-connected.
\end{proposition}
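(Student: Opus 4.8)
The plan is to reduce the statement to a question about linearity of a Poisson structure, and then to use the characterization of linearity via $\Rm$-actions from Proposition~\ref{linear}, together with the compatibility between the Lie functor and $\Rm$-actions established in Proposition~\ref{prop:diff.action} and Remark~\ref{rmk:identifications}. The key point is that both ``$\pi$ is linear with respect to $\Gamma\to G$'' and ``$\pi_{A_\Gamma}$ is linear with respect to $A_\Gamma\to A_G$'' can be phrased in terms of $\Rm$-actions, and these actions are related by differentiation.

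First I would recall that the VB-groupoid $\Gamma\toto E$ corresponds, via Theorem~\ref{thm:VB-gpd}, to a regular multiplicative action $h:\Rm\action(\Gamma\toto E)$, and by Proposition~\ref{prop:diff.action} the differentiated action $h'$ on $A_\Gamma$ is the regular IM-action corresponding to the VB-algebroid $A_\Gamma\then E$. By Proposition~\ref{linear}, the Poisson structure $\pi$ on $\Gamma$ is linear with respect to $\Gamma\to G$ precisely when $h_\lambda:(\Gamma,\pi)\to(\Gamma,\lambda\pi)$ is a Poisson map for all $\lambda\neq 0$; likewise, $\pi_{A_\Gamma}$ is linear with respect to $A_\Gamma\to A_G$ precisely when $h'_\lambda:(A_\Gamma,\pi_{A_\Gamma})\to(A_\Gamma,\lambda\pi_{A_\Gamma})$ is a Poisson map for all $\lambda\neq 0$. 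So the whole statement reduces to: for each fixed $\lambda\neq 0$, the map $h_\lambda$ is a Poisson map (with the rescaling by $\lambda$) if and only if $h'_\lambda=(h_\lambda)'$ is. This is now an instance of Proposition~\ref{bialgmap}(a), applied to the Poisson-groupoid map $h_\lambda:(\Gamma,\pi)\to(\Gamma,\lambda\pi)$, whose differentiation is the Lie-bialgebroid candidate map $(h_\lambda)'$.

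The mechanism is as follows. Since $\pi$ is multiplicative, $(\Gamma\toto E,\pi)$ is a Poisson groupoid with associated Lie bialgebroid $(A_\Gamma,\pi_{A_\Gamma})$, and rescaling $\pi$ by $\lambda$ rescales $\pi_{A_\Gamma}$ by the same $\lambda$ (this follows directly from formula \eqref{eq:piA}, which is linear in the bivector, upon noting that the identifications $j_G$ and $\theta_G$ are fixed). Thus $(\Gamma,\lambda\pi)$ is again a Poisson groupoid integrating $(A_\Gamma,\lambda\pi_{A_\Gamma})$. Applying Proposition~\ref{bialgmap}(a) to the groupoid morphism $h_\lambda:\Gamma\to\Gamma$: when $\Gamma$ is source-connected, $h_\lambda$ is a map of Poisson groupoids if and only if $(h_\lambda)'=h'_\lambda$ is a map of Lie bialgebroids, i.e.\ a Poisson map for the corresponding rescaled structures. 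Running this equivalence for every $\lambda\neq 0$ and invoking Proposition~\ref{linear} in both directions yields exactly the desired equivalence between linearity of $\pi$ and linearity of $\pi_{A_\Gamma}$.

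The main obstacle I anticipate is the direction of the implication and the role of the source-connectedness hypothesis. One implication — that linearity of $\pi$ forces linearity of $\pi_{A_\Gamma}$ — is the ``easy'' direction in Proposition~\ref{bialgmap}(a) and holds with no connectedness assumption, since differentiating a Poisson-groupoid map always yields a Lie-bialgebroid map. The converse direction, recovering that $h_\lambda$ is a Poisson map from the infinitesimal statement, genuinely requires the uniqueness of integration of Lie-algebroid maps, which is why source-connectedness of $\Gamma$ enters (via Remark~\ref{rem:sconnect}); one must check that both $\pi^\#_{\lambda\pi}$ and $\dd h_\lambda\circ\pi^\#\circ(\dd h_\lambda)^*$ are Lie-groupoid maps out of a source-connected VB-groupoid over $\Gamma$, so that agreement of their differentials forces agreement of the maps themselves. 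A minor point to verify carefully is that the rescaling $\pi\mapsto\lambda\pi$ intertwines correctly with the identifications $j_G,\theta_G$ so that the differentiated bialgebroid of $(\Gamma,\lambda\pi)$ is indeed $(A_\Gamma,\lambda\pi_{A_\Gamma})$; this is immediate from the linearity of \eqref{eq:piA} in $\pi$, but it is the hinge that lets Proposition~\ref{bialgmap} apply uniformly in $\lambda$.
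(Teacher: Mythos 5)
Your proposal is correct and follows essentially the same route as the paper's proof: both reduce linearity of $\pi$ and $\pi_{A_\Gamma}$ to the statement that $h_\lambda$ (resp. $h'_\lambda$) is a map of Poisson groupoids (resp. Lie bialgebroids) for $\lambda\neq 0$, note that $(\Gamma,\lambda\pi)$ integrates $(A_\Gamma,\lambda\pi_{A_\Gamma})$, and conclude by Proposition~\ref{bialgmap}(a). The only cosmetic difference is that the paper cites the packaged characterizations in Propositions~\ref{prop:pvb-gr} and \ref{thmpvbalg}, while you invoke Proposition~\ref{linear} directly.
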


\begin{proof}
Note that if $(A\then M,\pi)$ is a Lie bialgebroid, then so is
$(A\then M,\lambda\pi)$ for any $\lambda\in \mathbb{R}$, since
$\lambda \pi$ is a Poisson structure and we can write
$(\lambda\pi)^\#$ as the composition
$$\begin{matrix}
T^*A & \then & A^*\\
\downarrow & & \downarrow\\
A& \then & M
\end{matrix}
\qquad\xto{\pi^\#}\qquad
\begin{matrix}
TA & \then & TM\\
\downarrow & & \downarrow\\
A& \then & M
\end{matrix}
\qquad\xto{k_\lambda}\qquad
\begin{matrix}
TA & \then & TM\\
\downarrow & & \downarrow\\
A& \then & M,
\end{matrix}$$
where $k$ denotes the regular action associated with the vector
bundle $TA\to A$. The analogous result holds for Poisson groupoids.
Moreover, one may directly check that if $(G\toto M,\pi_G)$
integrates the bialgebroid $(A\then M,\pi_A)$, then $(G\toto M,
\lambda\pi_G)$ integrates $(A\then M,\lambda\pi_A)$, for $\lambda\in
\mathbb{R}$.

Let $h$ be the regular multiplicative action on $\Gamma\toto E$
defining its VB-groupoid structure, so that $h'$ defines the
VB-algebroid structure on $A_\Gamma\then E$ (see
Cor.~\ref{cor:diff.VB}). The linearity of $\pi$ with respect to
$\Gamma\to G$ is equivalent to $h_\lambda:(\Gamma,\pi)\to
(\Gamma,\lambda\pi)$ being a map of Poisson groupoids for all
$\lambda \neq 0$ (see Prop.~\ref{prop:pvb-gr}), while the linearity
of $\pi_{A_\Gamma}$ with respect to $A_\Gamma\to A_G$ is equivalent
to $h_\lambda': (A_\Gamma, \pi_{A_\Gamma})\to (A_\Gamma, \lambda
\pi_{A_\Gamma})$ being a map of Lie bialgebroids (see
Prop.~\ref{thmpvbalg}). The result now follows from the integration
of bialgebroid maps in Prop.~\ref{bialgmap}.
\end{proof}

The previous proposition immediately gives rise to a Lie functor
from PVB-groupoids to PVB-algebroids and implies the following
integration result:

\begin{corollary}\label{cor:integPVB}
If the total algebroid of a PVB-algebroid is integrable, then its
source-simply-connected integration inherits a unique  PVB-groupoid
structure whose differentiation is the given PVB-algebroid.
%\comment{improve statement? A: already put extra phrase at the end}
\end{corollary}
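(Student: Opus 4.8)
The plan is to assemble the desired PVB-groupoid out of two separate integration results applied to the two ingredients of the PVB-algebroid --- its VB-algebroid part and its Lie-bialgebroid part --- and then to check the single remaining compatibility by invoking the converse direction of Prop.~\ref{prop:PVBlie}. Concretely, by Prop.~\ref{thmpvbalg} the given PVB-algebroid is a Lie bialgebroid $(\Omega\then E,\pi)$ equipped with a regular IM-action $h$ such that $h_\lambda\colon(\Omega,\pi)\to(\Omega,\lambda\pi)$ is a Poisson map for every $\lambda\neq 0$. I would first forget $\pi$ and apply Theorem~\ref{thm:integration}: since $\Omega\then E$ is integrable, its source-simply-connected integration $\Gamma\toto E$ carries a regular multiplicative action $h$ making it a VB-groupoid over the source-simply-connected Lie groupoid $G\toto M$ integrating the base algebroid, and with $A_\Gamma=\Omega$.

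Next I would integrate the bialgebroid part. Since $\Gamma\toto E$ is source-simply-connected and integrates $\Omega$, the integration of Lie bialgebroids to Poisson groupoids (see \cite{Mac-Xu2} and the discussion in Section~\ref{sec:DLA}) yields a unique multiplicative Poisson structure $\pi_\Gamma$ on $\Gamma$ making $(\Gamma\toto E,\pi_\Gamma)$ a Poisson groupoid inducing $(\Omega,\pi)$ via \eqref{eq:piA}; in particular $\pi_{A_\Gamma}=\pi$.

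The only point left to verify is that this Poisson groupoid is compatible with the VB-groupoid structure, i.e., that $\pi_\Gamma$ is linear with respect to $\Gamma\to G$, which by Prop.~\ref{prop:pvb-gr} is exactly the PVB-groupoid condition. This is precisely the content of the converse direction of Prop.~\ref{prop:PVBlie}: the PVB-algebroid hypothesis says that $\pi_{A_\Gamma}=\pi$ is linear with respect to $A_\Gamma=\Omega\to A_G$, and since $\Gamma$ is source-simply-connected, hence source-connected, Prop.~\ref{prop:PVBlie} guarantees that $\pi_\Gamma$ is linear with respect to $\Gamma\to G$. Thus $(\Gamma\toto E,\pi_\Gamma,h)$ is a PVB-groupoid. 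Uniqueness follows from the uniqueness of the multiplicative action in Theorem~\ref{thm:integration} together with the uniqueness of $\pi_\Gamma$ in the bialgebroid integration, and its differentiation recovers the given PVB-algebroid by Cor.~\ref{cor:diff.VB} and \eqref{eq:piA}.

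I expect the main obstacle to be conceptual rather than computational: the delicate step is the compatibility, where one must know that the Poisson structure produced abstractly by the bialgebroid integration is the \emph{linear} one relative to $\Gamma\to G$. This rests ultimately on Prop.~\ref{bialgmap} (integration of Lie-bialgebroid maps), which is what powers the converse in Prop.~\ref{prop:PVBlie}; once that is granted, the corollary is essentially a bookkeeping combination of the two integration theorems.
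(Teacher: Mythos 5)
Your proposal is correct and matches the paper's intended argument: the paper states Corollary~\ref{cor:integPVB} as an immediate consequence of Prop.~\ref{prop:PVBlie}, and your proof spells out exactly that implication --- integrate the underlying VB-algebroid via Theorem~\ref{thm:integration}, integrate the Lie bialgebroid to a Poisson groupoid via Mackenzie--Xu, and then obtain the linearity of the resulting multiplicative Poisson structure from the converse direction of Prop.~\ref{prop:PVBlie} (which is indeed powered by Prop.~\ref{bialgmap}). You have also correctly identified the source of uniqueness and of the compatibility of differentiation, so nothing is missing.
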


% differentiation of LA-groupoids

Let us now consider LA-groupoids and double Lie algebroids. By
applying the Lie functor to the horizontal groupoid structures of an
LA-groupoid \eqref{eq:LAgrp}, one obtains a VB-algebroid
$A_\Gamma\then E$ over $A_G\then M$. A key observation is that there
is also a natural Lie-algebroid structure $A_\Gamma\then A_G$,
described in \cite[Thm.~2.14]{mac-double2}, so one can consider the
diagram of Lie algebroids
\begin{equation}\label{eq:DLA2}
\begin{matrix}
A_\Gamma & \then & E\\
\Downarrow & & \Downarrow\\
A_G& \then & M.
\end{matrix}
\end{equation}
This can be shown to be a double Lie algebroid \cite{mac-double2},
yielding a Lie functor from LA-groupoids to double Lie algebroids;
we will revisit this fact below and complement it with the
corresponding integration result.

% characterization of the extra algebroid

\begin{remark}\label{rem:sublie}
The Lie algebroid $A_\Gamma\then A_G$ referred to above can be also
directly described as the Lie-algebroid fibred product (see
Prop.~\ref{prop:fp.alg}),
\begin{equation}\label{eq:fibredLA}
\xymatrix{
(A_\Gamma\then A_G) \ar[r] \ar[d] &  (T\Gamma\then TG) \ar[d]^{(T\sour_\Gamma,T\sour_G) \times (q^\Gamma,q^G)}\\
 (E\then M) \ar[r]_{(0_E,0_M)\times (u_\Gamma,u_G)} & (TE\then TM)\times (\Gamma\then G).}
\end{equation}
Indeed, the Lie algebroid resulting from this fibred product is
uniquely characterized by the fact that it sits in $T\Gamma\then TG$
as a Lie subalgebroid; since the one in
\cite[Thm.~2.14]{mac-double2} also satisfies this property
\cite[Prop.~5.5]{ortiz}, they must coincide.
\end{remark}

% strategy

The following diagram illustrates our strategy to describe the Lie
theory relating double Lie algebroids and LA-groupoids:
$$
\xymatrix@C=40pt{
\text{LA-groupoids} \ar@{-->}[d]_{Lie} \ar@{<->}[r]^{duality}&\text{PVB-groupoids} \ar[d]^{Lie}\\
\text{Double Lie algebroids}
\ar@{<->}[r]_{duality}&\text{PVB-algebroids} }
$$
In order to follow the dotted arrow backwards, we will pass to the
dual framework and use the integration result in
Corollary~\ref{cor:integPVB}. But we first need to verify  that the
Lie functors on each side correspond to one another under duality.

\begin{proposition}\label{lie&duality}
The square above commutes up to a canonical natural isomorphism.
\end{proposition}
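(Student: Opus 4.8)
The plan is to fix an LA-groupoid $\Gamma\toto E$ over $G\toto M$ and compare the two PVB-algebroids obtained by traversing the square in each direction. Going first to the right and then down, VB-groupoid duality produces the PVB-groupoid $(\Gamma^*\toto C^*,\pi_{\Gamma^*})$, with $\pi_{\Gamma^*}$ the multiplicative Poisson structure dual to $\Gamma\then G$; applying the Lie functor of Prop.~\ref{prop:PVBlie} then yields the PVB-algebroid $(A_{\Gamma^*}\then C^*,\pi_{A_{\Gamma^*}})$ over $A_G\then M$, where $\pi_{A_{\Gamma^*}}$ is the differentiation of $\pi_{\Gamma^*}$ in the sense of \eqref{eq:piA}. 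Going first down and then right, the Lie functor for LA-groupoids gives the double Lie algebroid \eqref{eq:DLA2}, whose vertical dual \eqref{eq:1-1pvb} is the PVB-algebroid $((A_\Gamma)^*_{A_G}\then C^*,\pi_{A_G})$, with $\pi_{A_G}$ the linear Poisson structure dual to the vertical Lie algebroid $A_\Gamma\then A_G$ of \cite{mac-double2}. The first observation is that the two underlying VB-algebroids coincide: by Remark~\ref{rmk:identifications}(b), the canonical map $i_\Gamma\colon A_{\Gamma^*}\to (A_\Gamma)^*_{A_G}$ obtained by differentiating the pairing $\Gamma^*\times_G\Gamma\to\R$ is a natural isomorphism of VB-algebroids intertwining the Lie-algebroid structures over $C^*$ described in \eqref{eq:dualgp} and \eqref{eq:dualalg}. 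So $i_\Gamma$ is the natural-transformation candidate, and the content of the proposition is that $i_\Gamma$ is an isomorphism of PVB-algebroids, i.e. that it additionally intertwines $\pi_{A_{\Gamma^*}}$ and $\pi_{A_G}$.

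To prove that $i_\Gamma$ is a Poisson map, I would reformulate the matching of Poisson structures as a matching of Lie algebroids, using the duality between linear Poisson structures and Lie algebroids from Section~\ref{subsec:linear}. Since $\pi_{A_G}$ is by construction dual to $A_\Gamma\then A_G$, and since $\pi_{A_{\Gamma^*}}$ is linear with respect to $A_{\Gamma^*}\to A_G$, the desired identity $(i_\Gamma)_*\pi_{A_{\Gamma^*}}=\pi_{A_G}$ is equivalent to the assertion that the Lie-algebroid structure on $A_\Gamma\then A_G$ dual to $(i_\Gamma)_*\pi_{A_{\Gamma^*}}$ coincides with the structure of \cite{mac-double2}. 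Tracing definitions, the former structure is obtained from $\Gamma\then G$ by dualizing to $\pi_{\Gamma^*}$, differentiating to $\pi_{A_{\Gamma^*}}$, and dualizing back. Thus the whole statement reduces to a single clean assertion: \emph{the differentiation of the vertical Lie algebroid of the LA-groupoid commutes with linear-Poisson duality.}

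For this reduced assertion I would use the fibred-product description of Remark~\ref{rem:sublie}: the Lie algebroid $A_\Gamma\then A_G$ sits inside the tangent prolongation $T\Gamma\then TG$ as the Lie subalgebroid \eqref{eq:fibredLA}, and the Lie functor preserves such fibred products by Prop.~\ref{prop:diff.fp}. The dual linear Poisson structure $\pi_{A_G}$ is then determined by the corresponding structure on $T\Gamma$, namely by the tangent lift of $\pi_{\Gamma^*}$ (cf. Remark~\ref{rem:poisson}) and its restriction to the subalgebroid. On the other side, the differentiated structure $\pi_{A_{\Gamma^*}}$ is governed by \eqref{eq:piA} together with the identifications $j$, $\theta$ and $i$ of Remark~\ref{rmk:identifications}(b),(c). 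The key compatibility relating these two descriptions is the interplay between the reversal isomorphism and duality recorded in Remark~\ref{rem:compatible}, transported through the Lie functor by the naturality equations \eqref{eq:Tj}: the reversal isomorphism identifies the cotangent-type Lie algebroid governing $\pi_{A_{\Gamma^*}}$ with the tangent-type Lie algebroid carrying $\pi_{A_G}$, compatibly with the duality isomorphism of Remark~\ref{rem:compatible}. Matching anchors and brackets through these identifications yields $(i_\Gamma)_*\pi_{A_{\Gamma^*}}=\pi_{A_G}$.

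Finally, naturality of the resulting isomorphism of PVB-algebroids, and hence of the whole square up to isomorphism, is inherited from the naturality of $i_\Gamma$, $j$ and $\theta$ (Remark~\ref{rmk:identifications}) and of the differentiation procedures. The main obstacle I expect lies in the third step: precisely aligning the differentiated Poisson structure $\pi_{A_{\Gamma^*}}$, presented through the cotangent-cube identifications of \eqref{eq:piA} and Remark~\ref{rmk:identifications}, with the linear Poisson structure $\pi_{A_G}$ coming from the \cite{mac-double2} bracket on $A_\Gamma\then A_G$. This is essentially a higher, Lie-functorial instance of the reversal/duality compatibility of Remark~\ref{rem:compatible}, and carrying it out requires careful bookkeeping of the triple-bundle structures rather than any genuinely new idea.
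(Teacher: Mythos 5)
Your setup and reduction are exactly the paper's: you take the differentiated pairing $i_\Gamma\colon A_{\Gamma^*}\to (A_\Gamma)^*_{A_G}$ (the paper's map $\phi$ of \eqref{eq:phi}) as the candidate isomorphism, and via Remark~\ref{rem:sublie} you reduce everything to the single assertion that $i_\Gamma$ intertwines $\pi_{A_{\Gamma^*}}$ with the linear Poisson structure dual to the Lie algebroid $A_\Gamma\then A_G$ of \eqref{eq:fibredLA}. Up to that point the proposal is correct and matches the paper. The gap is in the step you yourself flag as the obstacle: you never actually prove the Poisson-map property, and the tools you propose for it --- the reversal/duality compatibility of Remark~\ref{rem:compatible} transported by the naturality equations \eqref{eq:Tj} --- are not shown to deliver it, nor is it clear how they would. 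Those identities are statements about identifications of (co)tangent double vector bundles; by themselves they do not compare the \emph{abstractly defined} differentiated Poisson structure $\pi_{A_{\Gamma^*}}$ of \eqref{eq:piA} with the concrete dual-of-\eqref{eq:fibredLA} structure, which is where all the content lies. ``Matching anchors and brackets through these identifications'' is precisely the unproved claim, not a proof of it.

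What the paper does at this point is different in kind, and its ingredients are absent from your sketch. It writes $\phi=(\iota_{A_\Gamma})^*\circ\Phi\circ\iota_{A_{\Gamma^*}}$, where $\Phi\colon T\Gamma^*\to (T\Gamma)^*_{TG}$ is the tangent-level pairing identification, and then argues with \emph{coisotropic calculus}: (1) $\Phi$ is a Poisson isomorphism between the tangent lift of $\pi$ and the structure dual to $T\Gamma\then TG$ (\cite[Thm.~10.3.14]{Mac-book}); (2) the inclusion $\iota_{A_{\Gamma^*}}\colon (A_{\Gamma^*},\pi_{A_{\Gamma^*}})\to (T\Gamma^*,\text{tangent lift of }\pi)$ is a Poisson map (\cite[Sec.~6.3]{bc}) --- this is the bridge between the abstract definition \eqref{eq:piA} and tangent-lift data, and nothing in your outline supplies it; (3) the dual relation $(\iota_{A_\Gamma})^*$ is coisotropic because $\iota_{A_\Gamma}$ is a Lie-algebroid map (Remark~\ref{rem:sublie} plus the duality criterion of Section~\ref{subsec:linear}); and (4) compositions of coisotropic relations are coisotropic \cite{We88}, so the graph of $\phi$ is coisotropic and $\phi$ is Poisson. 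Note in particular that $(\iota_{A_\Gamma})^*$ is only a \emph{relation}, not a map, so one genuinely needs relation calculus here; this is the structural reason why the ``careful bookkeeping of triple-bundle structures'' you defer cannot be routine naturality-chasing. To close your argument you would need to either reproduce this coisotropic-calculus chain or find a substitute for items (2)--(4).
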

\begin{proof}
Starting with an LA-groupoid \eqref{eq:LAgrp}, let us consider its dual PVB-groupoid
$$
\left(\begin{matrix}
\Gamma^* & \toto & C^*\\
\downarrow& & \downarrow\\
G & \toto & M
\end{matrix},\pi \right).
$$
By Prop.~ \ref{prop:PVBlie}, after applying the Lie functor we get a PVB-algebroid
$$
\left(\begin{matrix}
A_{\Gamma^*} & \then & C^*\\
\downarrow& & \downarrow\\
A_G & \then & M
\end{matrix},\pi_{A_{\Gamma^*}} \right).
$$
The pairing $\Gamma\times_G\Gamma^*\to \mathbb{R}$ leads to a
pairing $A_\Gamma\times_{A_G} A_{\Gamma^*}\to \mathbb{R}$ (via the
Lie functor, see Remark~\ref{rmk:identifications}(b)), and hence an
identification of VB-algebroids over $A_G\then M$,
\begin{equation}\label{eq:phi}
\phi: A_{\Gamma^*}\to (A_\Gamma)^*_{A_G}.
\end{equation}
This induces a PVB-algebroid structure on the VB-algebroid
$(A_\Gamma)^*_{A_G}\then C^*$ over $A_G\then M$, and hence, by
duality \eqref{eq:1-1pvb}, a double Lie algebroid
$$
\begin{matrix}
A_{\Gamma} & \then & E\\
\Downarrow& & \Downarrow\\
A_G & \then & M.
\end{matrix}
$$
It remains to check that the Lie algebroid $A_\Gamma\then A_G$
agrees with the one defined by \eqref{eq:fibredLA}. Equivalently, we
should verify that \eqref{eq:phi} is a Poisson isomorphism
$$
\phi: (A_{\Gamma^*},\pi_{A_{\Gamma^*}})\to ((A_\Gamma)^*_{A_G}, \bar{\pi}),
$$
where  $\bar{\pi}$ denotes the Poisson structure dual to the Lie
algebroid defined in \eqref{eq:fibredLA}.

To show that, recall that there is also a pairing (defined by the
tangent functor) $T\Gamma\times_{TG}T\Gamma^*\to \mathbb{R}$, which
leads to an identification $\Phi: T\Gamma^*\to (T\Gamma)^*_{TG}$.
Denoting by $\iota_{A_\Gamma}: A_\Gamma\to T\Gamma$ and
$\iota_{A_{\Gamma^*}}: A_{\Gamma^*}\to T\Gamma^*$ the natural
inclusions, one may directly verify that the maps $\Phi$ and $\phi$
are related by
$$
\phi= (\iota_{A_\Gamma})^*\circ \Phi\circ \iota_{A_{\Gamma^*}},
$$
where we view $(\iota_{A_\Gamma})^*$ as the dual relation to
$\iota_{A_\Gamma}$ and consider the composition of relations on the
right-hand side. Note that the relation $(\iota_{A_\Gamma})^*$ is
one-to-one, and defined over the whole image of the map $\Phi\circ
\iota_{A_{\Gamma^*}}$, so their composition is a map.

Endowing $T\Gamma^*$ with the tangent lift of $\pi$ (cf. Remark
~\ref{rem:poisson}) and $(T\Gamma)^*_{TG}$ with the Poisson
structure dual to the tangent Lie algebroid $T\Gamma\then TG$, it
follows from \cite[Thm.~10.3.14]{Mac-book} that $\Phi$ is a Poisson
isomorphism, so its  graph is coisotropic. The inclusion
$\iota_{A_{\Gamma^*}}$ is also a Poisson map with respect to
$\pi_{A_{\Gamma^*}}$ and the tangent lift of $\pi$, see e.g.
\cite[Sec.~6.3]{bc} (cf. \cite[Prop.~10.3.12 \&
Thm.~12.3.8]{Mac-book}). Since the dual relation
$(\iota_{A_\Gamma})^*$ is also coisotropic, see
Remark~\ref{rem:sublie},  and the composition of coisotropic
relations is coisotropic \cite{We88}, the graph of $\phi$ is
coisotropic, so it is a Poisson map.
\end{proof}

We conclude with the integration result.

%As explained in the next theorem, this turns out to be a DLA, called the {\em differentiation} of \eqref{eq:LAgrp}, and there is also a corresponding integration result.

\begin{theorem}\label{thm:LADLA}\
Consider a double Lie algebroid \eqref{DLA} for which the horizontal
Lie algebroid $\Omega\then E$ is integrable. Then its
source-simply-connected integration $\Gamma\toto E$ fits into an
LA-groupoid
\begin{equation}\label{eq:DLA3}
\begin{matrix}
\Gamma & \toto & E\\
\Downarrow & & \Downarrow\\
G& \toto & M,
\end{matrix}
\end{equation}
where $G\toto M$ is the source-simply-connected integration of
$A\then M$, uniquely determined by the property that its
differentiation is the given double Lie algebroid.
\end{theorem}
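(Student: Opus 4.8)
The plan is to reduce the statement to the dual picture and invoke the integration result for PVB-algebroids, exploiting that the Lie functor is compatible with duality via Proposition~\ref{lie&duality}. Concretely, I would begin with the double Lie algebroid \eqref{eq:DLA} and pass to its dual PVB-algebroid $(\Omega^*_A\then C^*,\pi_A)$ through the correspondence \eqref{eq:1-1pvb}. The first task is to show that the total algebroid $\Omega^*_A\then C^*$ of this PVB-algebroid is integrable and to identify its source-simply-connected integration. Since the horizontal algebroid $\Omega\then E$ is assumed integrable, Theorem~\ref{thm:integration} produces its source-simply-connected integration $\Gamma\toto E$ as a VB-groupoid over the source-simply-connected integration $G\toto M$ of $A\then M$, with $A_\Gamma=\Omega$. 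The dual VB-groupoid $\Gamma^*\toto C^*$ is then a VB-groupoid over $G\toto M$, and by Remark~\ref{rmk:identifications}(b) its differentiation is $A_{\Gamma^*}\cong\Omega^*_A$; moreover $\Gamma^*$ is source-simply-connected because $G$ is (Remark~\ref{rem:sscandmap}(a)). Hence $\Gamma^*\toto C^*$ is precisely the source-simply-connected integration of $\Omega^*_A\then C^*$.

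With this identification in place, I would apply Corollary~\ref{cor:integPVB} to $(\Omega^*_A\then C^*,\pi_A)$: its total algebroid being integrable, $\Gamma^*\toto C^*$ inherits a unique PVB-groupoid structure $(\Gamma^*\toto C^*,\pi)$ whose differentiation is $(\Omega^*_A\then C^*,\pi_A)$. Dualizing this PVB-groupoid through VB-groupoid duality together with the correspondence of \cite[Thm.~3.14]{mk2} yields an LA-groupoid structure on $(\Gamma^*)^*\cong\Gamma$, i.e.\ the desired diagram \eqref{eq:DLA3} over $G\toto M$, in which the vertical Lie-algebroid structures are $\Gamma\then G$ and the given $E\then M$.

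It remains to verify that differentiating this LA-groupoid recovers the original double Lie algebroid and that the construction is unique. For the first point I would invoke Proposition~\ref{lie&duality}: the square relating the Lie functors on LA-groupoids/double Lie algebroids and on PVB-groupoids/PVB-algebroids commutes up to a canonical isomorphism. By construction the LA-groupoid we built has dual PVB-groupoid $(\Gamma^*,\pi)$, which differentiates to $(\Omega^*_A\then C^*,\pi_A)$, whose dual is exactly \eqref{eq:DLA}; commutativity of the square then forces the direct differentiation of the LA-groupoid to coincide with the given double Lie algebroid. For uniqueness I would trace the same path backwards: the underlying VB-groupoid $\Gamma\toto E$ is the unique source-simply-connected integration of $\Omega\then E$ (Theorem~\ref{thm:integration}), and the only remaining datum, the Lie-algebroid structure $\Gamma\then G$, is encoded dually by the multiplicative linear Poisson structure on $\Gamma^*$, which is uniquely determined by Corollary~\ref{cor:integPVB}.

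The main obstacle I anticipate is not a single computation but the bookkeeping required to guarantee that all the identifications are mutually compatible: that the VB-duality passing between $\Gamma$ and $\Gamma^*$ at the groupoid level matches the VB-duality between $\Omega$ and $\Omega^*_A$ at the algebroid level and is intertwined by differentiation. This is exactly what is packaged into Remark~\ref{rmk:identifications}(b) and, at the level of the Poisson structures, into Proposition~\ref{lie&duality}. Once these compatibilities are granted, the theorem becomes an assembly of Theorem~\ref{thm:integration}, Corollary~\ref{cor:integPVB}, and Proposition~\ref{lie&duality}, with no further essential difficulty.
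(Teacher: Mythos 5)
Your proposal is correct and follows essentially the same route as the paper's proof: pass to the dual PVB-algebroid $(\Omega^*_A\then C^*,\pi_A)$, identify its source-simply-connected integration with the dual VB-groupoid $\Gamma^*\toto C^*$ (via Prop.~\ref{lie&duality} and Remark~\ref{rem:sscandmap}(a)), endow it with the PVB-groupoid structure integrating $\pi_A$ (Prop.~\ref{prop:PVBlie}, equivalently Cor.~\ref{cor:integPVB}), and dualize back to obtain the LA-groupoid. Your write-up simply makes explicit the bookkeeping (Remark~\ref{rmk:identifications}(b), the uniqueness trace-back) that the paper leaves implicit in its compact argument.
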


\begin{proof}
The dual VB-algebroid $\Omega^*_A\then C^*$ is integrated by the
dual VB-groupoid $\Gamma^*\toto C^*$ (see Prop.~\ref{lie&duality}),
and it is source-simply-connected (see
Remark~\ref{rem:sscandmap}(a)). This VB-groupoid inherits a
PVB-groupoid structure by Prop.~\ref{prop:PVBlie}. By dualizing it,
we obtain an LA-groupoid \eqref{eq:LAgrp} corresponding to
\eqref{eq:DLA3}.
\end{proof}

It would be interesting to use this theorem to extend the discussion
in Remark~\ref{rmk:ruth} to the context of representations up to
homotopy encoded by double Lie algebroids, as studied in
\cite{GJMM}.

A natural further step is the integration of LA-groupoids to double
Lie groupoids \cite{mac-double1}, as considered in \cite{luca}. We
hope to address this issue in a separate work.

\appendix

\section{Fibred products of Lie groupoids and Lie algebroids}

\def\A12{A_1 \times A_2}
\def\transverse{\pitchfork}
\def\eps{\epsilon}
\def\toto{\rightrightarrows}
\def\xto{\xrightarrow}
\def\xfrom{\xleftarrow}
\def\then{\Rightarrow}
\def\R{\mathbb R}
\def\into{\hookrightarrow}
\def\Lie{{\rm Lie}}

% problem and content

%Given $X_1\to X$ and $X_2\to X$ maps of Lie groupoids (resp.algebroids),
%the equation $f_1\pi_1=f_2\pi_2$ may not define a good subobject of the product $X_1\times X_2$, or more precisely,
%their fibred product may not exist in the category of Lie groupoids(resp. algebroids).
% content
%After discussing subobjects of Lie groupoids and algebroids,

We present in this appendix a criterion for the existence of fibred
products in the categories of Lie algebroids and Lie groupoids,
extending and organizing some previous results in the literature. We
also study the behavior of fibred products under the Lie functor.

% good fibred products

Our criterion is based on the following notion. Two smooth maps
$f_i:M_i\to M$, $i=1,2$, form a {\em good pair} if their
set-theoretic fibred product $M_{12}:=M_1 \times_{M} M_2 \subset M_1
\times M_2$ is an embedded submanifold with the expected tangent
space, i.e.,  for all $(x_1,x_2)\in M_{12}$ with
$x=f_1(x_1)=f_2(x_2)$ the following sequence is exact:
\begin{equation}\label{eq:exact}
0 \longrightarrow T_{(x_1,x_2)}M_{12} \longrightarrow T_{x_1}M_1
\times T_{x_2}M_2 \stackrel{\dd f_1-\dd f_2}{\longrightarrow} T_xM.
\end{equation}
We refer to the resulting manifold $M_{12}$ as a {\em good fibred
product}, for it satisfies the universal property and behaves well
with respect to the topologies and the tangent spaces (see e.g.
\cite{survey}). The paradigmatic example of a good pair is given by
transverse maps. Another example is given by embedded submanifolds
with clean intersection.

% convention
In this appendix, by a {\em submanifold} we mean an injective
immersion. Submanifold are usually identified with a subset of a
manifold $M$, equipped with a smooth structure, for which the
inclusion map is an injective immersion.

\subsection{The groupoid case}\label{subsec:subgrp}

% Lie subgroupoid

A {\em Lie subgroupoid} of $G\toto M$  is a Lie groupoid
$\tilde{G}\toto \tilde{M}$ along with a Lie-groupoid map
$(\tilde{G}\toto \tilde{M})\to(G\toto M)$ which is an injective
immersion on objects and on arrows.

% issue with the restriction of the source

When studying fibred products, it is convenient to have an
alternative viewpoint. Given a Lie groupoid $G\toto M$, let
$(\tilde{G}\toto \tilde{M}) \subseteq (G\toto M)$ be a set-theoretic
subgroupoid, defined by restrictions of the structure maps of $G$.
Assume that the following conditions hold:
\begin{enumerate}
\item $\tilde{G}\subseteq G$ and $\tilde{M}\subseteq M$ are
submanifolds;
\item The restriction of the source map to $\tilde{G}$, ${\sour}: \tilde{G}\to \tilde{M}$, is a
submersion;
\item The structure maps of $\tilde{G}\toto \tilde{M}$ are smooth.
\end{enumerate}

It is not hard to see that Lie subgroupoids are equivalent to
set-theoretic subgroupoids satisfying these three conditions. We
remark that, in many situations, (3) automatically follows from (1)
and (2), e.g. when $\tilde{G}$ and $\tilde{M}$ are embedded. We also
observe that there are set-theoretic subgroupoids satisfying (1) and
(3), but which fail to be Lie subgroupoids by not satisfying (2)
(though this cannot happen when the subgroupoid is
source-connected). We will give an example below.

\begin{remark}
In order to consider the smoothness of the multiplication map, it is
implicitly required in (3) that $\tilde{G}^{(2)}$ sits in $G^{(2)}$
as a submanifold. Condition (2) guarantees this fact, but the next
example shows that it is not necessary.
\end{remark}

\begin{example}
Let $G\toto M$ be the Lie groupoid induced by the submersion
$\pi_1:\R^2\to\R$ (projection on the first factor): in this case
$M=\R^2$ and an arrow in $G$ consists of a pair of points in $\R^2$
on the same vertical line. Define $\tilde{M}=C_1\cup C_2\subset M$
as the union of the two curves
$$
C_1=\{(t^3,t): -1< t <1\} \qquad\text{and}\qquad C_2=\{(t,2): -1 < t
< 1\},
$$
and define $\tilde{G}\subset G$ as the space of arrows whose source
and target lie in $\tilde{M}$. Then $\tilde{G}\toto \tilde{M}$ is a
set-theoretic subgroupoid, and $\tilde{M}\subset M$ and
$\tilde{G}\subset G$ are embedded submanifolds. One may also
directly verify that $\tilde{G}^{(2)} \subset G^{(2)}$ is  an
embedded submanifold, so (1) and (3) above are satisfied. However,
the differential of $\sour:\tilde{G}\to \tilde{M}$ is not surjective
at the point $((0,0),(0,2))\in \tilde{G}$, so condition (2) above
does not hold.
\end{example}

% Remarks
%\comment{Given $G'\toto M'$ a subgroupoid, since $su=\id_{M'}:M'\to
%M'$, the differential of $s:G'\to M'$ has to be surjective at
%$u(M)$. Thus, the rank of $ds$ may only decrease at points outside a
%neighborhood of the units. A standard argument shows in fact that a
%source-connected subgroupoid is always a Lie subgroupoid.}

% good fibred products

The next lemma uses the following fact: Given a connected manifold
$M$ and a smooth map $f:M\to M$ such that $f^2=f$, its image
$f(M)\subset M$ is an embedded submanifold and $T_x
 f(M)=\dd_xf(T_xM)$ for all $x\in M$, see \cite[Thm.~1.13]{nodg} for
details.
%This is because for a
%linear projection the trace and the rank agree, hence the rank of
%$\dd_xf(T_xM)$ is smooth and has to be locally constant.

\begin{lemma}
\label{lemma:good.pairs.groupoids} Let $(F_i,f_i):(G_i\toto
M_i)\to(G\toto M)$, $i=1,2$, be two Lie-groupoid maps. If $F_1,F_2$
is a good pair, then so is $f_1,f_2$.
\end{lemma}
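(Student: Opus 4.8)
The plan is to realize the object fibred product $M_{12} := M_1 \times_M M_2$ as the image of a smooth idempotent defined on the arrow fibred product $G_{12} := G_1 \times_G G_2$, and then invoke the retraction fact recalled just above the statement (\cite[Thm.~1.13]{nodg}), which is precisely the tool tailored for such an argument. First I would identify $M_{12}$ inside $G_{12}$: using the unit embeddings $u_{G_i}:M_i\hookrightarrow G_i$ together with the relations $F_i\circ u_{G_i}=u_G\circ f_i$ and the injectivity of $u_G$, one checks that $(x_1,x_2)\mapsto (u_{G_1}(x_1),u_{G_2}(x_2))$ identifies $M_{12}$ with $G_{12}\cap\big(u_{G_1}(M_1)\times u_{G_2}(M_2)\big)$. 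Since $u_{G_1}\times u_{G_2}$ is an embedding, it suffices to prove that this intersection is an embedded submanifold of $G_{12}$ carrying the expected tangent spaces.

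To this end I would consider the smooth map $P:G_1\times G_2\to G_1\times G_2$, $P(g_1,g_2)=\big(u_{G_1}(\sour_{G_1}(g_1)),u_{G_2}(\sour_{G_2}(g_2))\big)$. Using $f_i\circ\sour_{G_i}=\sour_G\circ F_i$ one verifies that $P$ maps $G_{12}$ into $G_{12}$, so (as $G_{12}$ is embedded by the good pair hypothesis for $F_1,F_2$) it restricts to a smooth map $p:G_{12}\to G_{12}$; the identity $\sour_{G_i}\circ u_{G_i}=\id$ then shows $p\circ p=p$ and that the image of $p$ is exactly the copy of $M_{12}$ found above. Applying \cite[Thm.~1.13]{nodg} componentwise (as $p$ preserves connected components, each image point being fixed), I conclude that $M_{12}$ is an embedded submanifold of $G_{12}$, hence, via $u_{G_1}\times u_{G_2}$, an embedded submanifold of $M_1\times M_2$, and that at $\tilde x=(u_{G_1}(x_1),u_{G_2}(x_2))$ its tangent space equals $\dd p\,(T_{\tilde x}G_{12})$.

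It then remains to match this with the expected tangent space, i.e.\ to show $T_{(x_1,x_2)}M_{12}=\ker(\dd f_1-\dd f_2)$, which gives exactness of \eqref{eq:exact}. The inclusion $\subseteq$ is automatic since $f_1=f_2$ on $M_{12}$. For the reverse inclusion I would take $(v_1,v_2)$ with $\dd f_1(v_1)=\dd f_2(v_2)=:w$; differentiating $F_i\circ u_{G_i}=u_G\circ f_i$ gives $\dd F_i(\dd u_{G_i}(v_i))=\dd u_G(w)$, so $(\dd u_{G_1}(v_1),\dd u_{G_2}(v_2))$ lies in $\ker(\dd F_1-\dd F_2)=T_{\tilde x}G_{12}$, where the good pair hypothesis for $F_1,F_2$ is used. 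Because $\sour_{G_i}\circ u_{G_i}=\id$, this vector is fixed by $\dd p$, hence lies in $\dd p\,(T_{\tilde x}G_{12})=T_{\tilde x}M_{12}$, and transporting it back through the embedding $u_{G_1}\times u_{G_2}$ yields $(v_1,v_2)\in T_{(x_1,x_2)}M_{12}$.

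The only genuinely substantive input is the good pair hypothesis on the arrows, which enters exactly at the point where we need $\ker(\dd F_1-\dd F_2)$ to be the honest tangent space of $G_{12}$ (both for the retraction fact to produce the correct tangent spaces and for the reverse inclusion above). Everything else is a formal manipulation of the unit and source maps, so I do not anticipate an obstacle beyond careful bookkeeping with the unit/source identifications; in particular, the disconnected case requires only the observation that the idempotent $p$ preserves connected components.
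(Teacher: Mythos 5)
Your argument is essentially the paper's own proof: the paper likewise forms the idempotent $u\circ\sour$ on $G_{12}$ (your $p$), applies \cite[Thm.~1.13]{nodg} to conclude that its image, a copy of $M_{12}$, is an embedded submanifold of $G_{12}$, hence of $G_1\times G_2$, hence of $u_{G_1}(M_1)\times u_{G_2}(M_2)\cong M_1\times M_2$, and then identifies tangent spaces. The only difference is in that last step: the paper observes that, since $\sour\circ u=\id$, the sequence \eqref{eq:exact} for $f_1,f_2$ is a direct summand of the corresponding sequence for $F_1,F_2$, which is exact by hypothesis, whereas you verify the two inclusions by hand; both versions are correct.

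One parenthetical remark of yours is false, however: $p$ does \emph{not} preserve connected components in general. Take $G$ to be the trivial groupoid over a point and $G_1=G_2=H$ a disconnected Lie group; then $F_1,F_2$ are a good pair, $G_{12}=H\times H$, and $p\equiv(e,e)$, which moves every point outside the product of the identity components into a different component. So a literal ``componentwise'' application of the theorem is not available. This does not sink the argument: the retraction theorem is local around fixed points (its proof produces, near each fixed point, a chart in which the idempotent becomes a linear projection), so the image of a smooth idempotent on a possibly disconnected manifold is still an embedded submanifold satisfying $T_x\,p(G_{12})=\dd_x p\,(T_xG_{12})$ at each of its points, except that its dimension may vary from one component of the image to another. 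You should either invoke this local form of the theorem or note that the varying dimension is harmless here (the expected tangent spaces $\ker(\dd f_1-\dd f_2)$ have no constant-rank requirement either). It is worth saying that the paper's own proof silently elides the same point, since it too applies the theorem---stated there only for connected manifolds---to the possibly disconnected $G_{12}$.
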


\begin{proof}
Let us denote the set-theoretic fibred-product of $F_1$, $F_2$
(resp. $f_1$, $f_2)$ by $G_{12}$ (resp. $M_{12}$), and consider the
maps $\sour:= (\sour_1,\sour_2): G_{12}\to M_{12}$ and
$u:=(u_1,u_2): M_{12}\to G_{12}$, where $\sour_i, u_i$ are the
source and unit maps of $G_i$, $i=1,2$. Then $u\sour : G_{12}\to
G_{12}$ satisfies $(u\sour)^2=u\sour$, hence its image $u(M_{12})$
is an embedded submanifold of $G_{12}$, hence of $G_1\times G_2$,
and then of $u(M_1\times M_2)$.

Regarding the condition on the tangent spaces, we have to show that the sequence
$$
0 \to T_{(x_1,x_2)}M_{12} \to T_{x_1}M_1 \times T_{x_2}M_2 \to T_xM
$$
is exact (cf. \eqref{eq:exact}). But it follows from $\sour u=\id$
that this last sequence is a direct summand of
$$
0 \to T_{(x_1,x_2)}G_{12} \to T_{x_1}G_1 \times T_{x_2}G_2 \to T_xG,
$$
which is exact by hypothesis, and hence the result.
\end{proof}

% Fibred product of groupoids
We are now ready to consider fibred products of Lie groupoids.

\begin{proposition}
\label{prop:fp.gpd} Given a good pair of Lie groupoid maps as in
Lemma~\ref{lemma:good.pairs.groupoids}, the fibred-product manifolds
$G_{12}$ and $M_{12}$ define an embedded Lie subgroupoid of the
product groupoid,
$$
(G_{12}\toto M_{12})\subset(G_1\times G_2\toto M_1\times M_2).
$$
Moreover, this Lie groupoid satisfies the universal property of the
fibred product in the category of Lie groupoids.
\end{proposition}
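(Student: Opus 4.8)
The plan is to verify that $(G_{12}\toto M_{12})$ is an embedded Lie subgroupoid of the product groupoid $G_1\times G_2\toto M_1\times M_2$ by checking conditions (1)--(3) of Section~\ref{subsec:subgrp}, and then to establish the universal property by a formal argument. Recall that by hypothesis $F_1,F_2$ form a good pair, so $G_{12}\subseteq G_1\times G_2$ is embedded with the expected tangent spaces, and by Lemma~\ref{lemma:good.pairs.groupoids} the pair $f_1,f_2$ is also good, so $M_{12}\subseteq M_1\times M_2$ is embedded as well; this already gives condition (1).

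First I would check that $G_{12}\toto M_{12}$ is a set-theoretic subgroupoid, i.e.\ that the structure maps $(\sour_1,\sour_2)$, $(\tar_1,\tar_2)$, $(m_1,m_2)$, $(u_1,u_2)$, $(\inv_1,\inv_2)$ of the product preserve $G_{12}$ and $M_{12}$. Each is immediate from the fact that $F_1,F_2$ are groupoid maps covering $f_1,f_2$: for example, if $(g_1,g_2)\in G_{12}$ then $F_1(g_1)=F_2(g_2)$ forces $f_1\sour_1(g_1)=\sour_G F_1(g_1)=\sour_G F_2(g_2)=f_2\sour_2(g_2)$, so $(\sour_1(g_1),\sour_2(g_2))\in M_{12}$; composability and multiplicativity follow the same way from $F_i(g_ih_i)=F_i(g_i)F_i(h_i)$, and the unit and inverse are analogous.

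The key point is condition (2), that the restricted source $\sour=(\sour_1,\sour_2)\colon G_{12}\to M_{12}$ is a submersion. The subtlety is that the product source is a submersion but its restriction to the submanifold $G_{12}$ need not be, so I would reuse the idempotent argument from Lemma~\ref{lemma:good.pairs.groupoids}. The unit map $u=(u_1,u_2)$ restricts to a map $M_{12}\to G_{12}$ (again because the $F_i$ are groupoid maps), so $\Theta:=u\circ\sour\colon G_{12}\to G_{12}$ is a smooth idempotent, $\Theta^2=\Theta$, with image $u(M_{12})$. Applying \cite[Thm~1.13]{nodg} at an arbitrary point $(g_1,g_2)\in G_{12}$ (working on its connected component) gives $T_{u(x)}u(M_{12})=\dd\Theta(T_{(g_1,g_2)}G_{12})=\dd u\big(\dd\sour(T_{(g_1,g_2)}G_{12})\big)$, where $x=\sour(g_1,g_2)$. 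Since $u|_{M_{12}}$ is an embedding onto $u(M_{12})$, the left-hand side equals $\dd u(T_x M_{12})$, and injectivity of $\dd u$ yields $\dd\sour(T_{(g_1,g_2)}G_{12})=T_x M_{12}$, so $\sour$ is a submersion. With (1) and (2) in place, condition (3) holds automatically for embedded subgroupoids (see Section~\ref{subsec:subgrp}), and hence $G_{12}\toto M_{12}$ is an embedded Lie subgroupoid of the product.

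Finally I would verify the universal property. The restrictions of the product projections give Lie-groupoid maps $\mathrm{pr}_i\colon G_{12}\to G_i$ with $F_1\circ\mathrm{pr}_1=F_2\circ\mathrm{pr}_2$. Given any Lie groupoid $H\toto N$ together with Lie-groupoid maps $\Psi_i\colon H\to G_i$ satisfying $F_1\circ\Psi_1=F_2\circ\Psi_2$, the induced map $(\Psi_1,\Psi_2)\colon H\to G_1\times G_2$ has image contained in $G_{12}$; because $G_{12}$ is embedded, this corestriction is smooth, and it is clearly the unique groupoid map through which the $\Psi_i$ factor, establishing $G_{12}\toto M_{12}$ as the fibred product in the category of Lie groupoids. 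I expect the only genuine obstacle to be the submersion property of the source in condition (2), which the idempotent trick resolves; the set-theoretic subgroupoid check and the universal property are routine.
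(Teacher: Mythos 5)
Your reduction of the problem matches the paper's: condition (1) follows from the good-pair hypotheses together with Lemma~\ref{lemma:good.pairs.groupoids}, the set-theoretic subgroupoid check and the universal property are routine, and everything hinges on condition (2), that $\tilde{\sour}=(\sour_1,\sour_2):G_{12}\to M_{12}$ is a submersion. But your treatment of (2) — the step you yourself single out as the crux — has a genuine gap. The idempotent theorem \cite[Thm.~1.13]{nodg} asserts that the image of a smooth idempotent $\Theta:N\to N$ is an embedded submanifold whose tangent space \emph{at a fixed point} $z=\Theta(z)$ equals $\dd_z\Theta(T_zN)$; it does \emph{not} assert $T_{\Theta(z)}\Theta(N)=\dd_z\Theta(T_zN)$ at an arbitrary point $z$, and that stronger statement is false: the rank of an idempotent can drop away from its fixed-point set. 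For instance, $f(a,b)=(a(1-b),0)$ on the connected manifold $\R^2$ satisfies $f\circ f=f$ with image $\R\times\{0\}$, yet $\dd_{(0,1)}f=0$, so $\dd_{(0,1)}f(T_{(0,1)}\R^2)=\{0\}\neq T_{f(0,1)}(\R\times\{0\})$; working on a connected component does not help. (The paper's loose phrasing ``for all $x\in M$'' only parses when $x\in f(M)$, and that is the only correct reading; in Lemma~\ref{lemma:good.pairs.groupoids} the theorem is used solely to embed $u(M_{12})$, i.e.\ at fixed points.) Worse, the equality you assert at an arbitrary $(g_1,g_2)\in G_{12}$ is essentially a restatement of what must be proved: it says that the restriction to $G_{12}$ of the ambient constant-rank idempotent $u\sour$ still has rank $\dim M_{12}$ at $(g_1,g_2)$, and the possibility of this rank dropping at non-unit points is exactly the ``subtlety'' you set out to address. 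At unit points, where the theorem does apply, submersiveness was never in doubt, since $\tilde{\sour}\circ u=\id$.

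The missing idea — the one the paper uses — is to transport kernels along the groupoid structure by right translations. Writing $K'_g=\ker(\dd\sour)\cap T_gG_{12}$ and using that $F_1,F_2$ are groupoid maps together with $T_gG_{12}=T_{g_1}G_1\times_{TG}T_{g_2}G_2$, one checks that $\dd(R_h)(K'_g)\subset K'_{gh}$ for any $h\in G_{12}$ composable with $g$. Taking $h=g^{-1}$ and using injectivity of $\dd(R_{g^{-1}})$ bounds $\dim K'_g$ by the kernel dimension at a unit; since $\tilde{\sour}$ is a submersion in a neighborhood of the units (again by $\tilde{\sour}\circ u=\id$ and openness of the surjectivity of the differential), this forces $\dd\tilde{\sour}$ to be surjective at every $g$. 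Your proof becomes correct if the idempotent step is replaced by this translation argument; as written, the key step fails.
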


\begin{proof}
Since $G_{12}\subset G_1\times G_2$ and $M_{12}\subset M_1\times
M_2$ are embedded submanifolds, it remains to show that source map
$\sour$ of $G_1\times G_2$ restricts to a submersion $\tilde{\sour}:
G_{12}\to M_{12}$.

Given $g = (g_1,g_2) \in G_{12}$ with source $x=(x_1,x_2)\in
M_{12}$, denote by $K_{g}$ and $K'_{g}$ the kernels of the maps
$$
\dd\sour:T_{ g}(G_1\times G_2)\to T_{ x}(M_1\times M_2) \quad \text{
and } \quad \dd\tilde{\sour} :T_{ g}G_{12}\to T_{ x}M_{12},
$$
respectively. Note that $K'_{ g}= K_{ g}\cap T_{ g}G_{12}$. Since we
know that $\tilde{\sour} : G_{12}\to M_{12}$ is a submersion close
to the units (as a consequence of $\sour u = \id$), it is enough to
show that $\dim K'_{ g}\leq \dim K'_{u( x)}$. We will show that
$\dd(R_{ g^{-1}}) (K'_{ g})\subset K'_{ u(x)}$ (here $R_g$ denotes
right-translation), and since $\dd(R_{ g^{-1}})$ is injective, the
result follows.

We have
$$K'_{g}= K_{g}\cap T_{g}G_{12}\subset T_{g}(G_1\times G_2)= T_{g_1}G_1\times T_{g_2}G_2$$
and
$$T_{g}G_{12}= T_{g_1}G_1\times_{T_{F_1(g_1)} G} T_{g_2}G_2.$$
Given $v=(v_1,v_2)\in K_{g}$, and given $h=(h_1,h_2)\in G_{12}$
composable with $g$, we have
$\dd(R_h)(v)=(\dd(R_{h_1})v_1,\dd(R_{h_2})v_2)$. If $v\in K'_{g}$,
then $\dd F_1(v_1)=\dd F_2(v_2)$, and
$$
\dd F_1(\dd(R_{h_1})(v_1)) = \dd(R_{F_1(h_1)}(\dd
F_1(v_1))=\dd(R_{F_2(h_2)}(\dd F_2(v_2)) = \dd
F_2(\dd(R_{h_2})(v_2)),
$$
from where $\dd(R_h)(v)\in K'_{gh}$, and hence
$\dd(R_h)(K'_{g})\subset K'_{gh}$ as desired.
\end{proof}

% Corollaries / References
\begin{remark}
Special cases of the last proposition have appeared in the
literature. For instance, in the case of transverse maps (a
particular type of good pair), the existence of fibred products of
Lie groupoids is stated %without proof
in \cite[pp.~123]{mm}. Under even stronger
assumptions, such fibred products were proven to exist e.g. in
\cite[Prop.~2.1]{luca} (under an additional ``source transversality
condition'') and \cite[Prop.~2.4.14]{Mac-book} (for groupoid maps
which are ``fibrations'').
A number of  good pairs which are not transverse maps appear in this
paper, see e.g. Remark~\ref{rmk:kergrp}.
\end{remark}

%%%%%%%%%%%%%%%%%%%%%%%%%%%%%%%%%%%%%%%%%%%%5

\subsection{The algebroid case}

% Lie subalgebroids

By a {\em vector subbundle} of a vector bundle $A\to M$ we mean a
vector bundle $\tilde{A}\to \tilde{M}$ and injective immersions
$\tilde{A}\into A$, $\tilde{M}\into M$ defining a vector-bundle map.
We say that $\tilde{A}\to \tilde{M}$ is {\em embedded} if
$\tilde{M}\subset M$ is so, and consequently also $\tilde{A}\subset
A$. A {\em Lie subalgebroid} of $A\then M$ is a vector subbundle
equipped with a Lie-algebroid structure for which the inclusion is a
Lie-algebroid map (c.f. \cite[Def.~1.2]{HiMa}).

In order to deal with fibred products, it is convenient to have a
criterion to identify vector subbundles which inherit the structure
of a Lie subalgebroid. Given an embedded vector subbundle
$\tilde{A}\to \tilde{M}$ of a Lie algebroid $A\then M$, let us
consider the following compatibility conditions with the anchor and
bracket:
\begin{enumerate}
 \item[$(i)$] $\rho(\tilde{A})\subset T\tilde{M}$, and
 \item[$(ii)$] if $X,Y\in\Gamma(A)$ are such that $X|_{\tilde{M}},Y|_{\tilde{M}}\in\Gamma(\tilde{A})$,
 then $[X,Y]|_{\tilde{M}}\in\Gamma(\tilde{A})$.
\end{enumerate}

\begin{remark} These compatibility conditions also make sense for non-embedded subbundles:
in this case, the conditions $\rho(\tilde{A})\subset T\tilde{M}$
and $X|_{\tilde{M}}\in\Gamma(\tilde{A})$ implicitly require that the
induced set-theoretic functions $\tilde{A}\to T\tilde{M}$ and
$\tilde{M}\to \tilde{A}$ are smooth, which is automatic in the
embedded case.
\end{remark}

% condition iii

These compatibility conditions imply the following  additional
property:

\begin{lemma}\label{lem:iii}
\label{lemma:iii} Given a Lie algebroid $A\then M$ and a subbundle
$\tilde{A}\to \tilde{M}$ satisfying property (i) above, the
following holds:
\begin{enumerate}
\item[(iii)] If $X,Y\in\Gamma(A)$ satisfy $X|_{\tilde{M}}=0$ and $Y|_{\tilde{M}}\in\Gamma(\tilde{A})$,
then $[X,Y]|_{\tilde{M}}=0$.
\end{enumerate}
\end{lemma}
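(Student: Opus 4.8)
The plan is to reduce the statement to a pointwise computation near a point of $\tilde{M}$ and to extract everything from the Leibniz rule for the Lie-algebroid bracket, which is the only place where the anchor hypothesis (i) intervenes. Since the vanishing of $[X,Y]$ along $\tilde{M}$ is a condition to be checked at each point, it suffices to fix $p\in\tilde{M}$ and work in a neighborhood of $p$ in $M$. There I would choose a local frame $e_1,\dots,e_r$ of $A$ and write $X=\sum_a g_a e_a$ with $g_a\in C^\infty(M)$. Because the $e_a$ form a frame and $X|_{\tilde{M}}=0$, the coefficients must vanish on $\tilde{M}$, i.e. $g_a|_{\tilde{M}}=0$.

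The second step is to expand the bracket via the Leibniz identity $[\sum_a g_a e_a,\,Y]=\sum_a\bigl(g_a[e_a,Y]-(\rho(Y)g_a)\,e_a\bigr)$ and to evaluate at $p$. The first family of terms vanishes at $p$ because $g_a(p)=0$, so only $-\sum_a (\rho(Y)g_a)(p)\,e_a(p)$ survives; the claim thus reduces to showing that each $(\rho(Y)g_a)(p)$ is zero.

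This last vanishing is where hypothesis (i) enters, and it is the only genuinely nontrivial point. Since $Y|_{\tilde{M}}\in\Gamma(\tilde{A})$, we have $Y(p)\in\tilde{A}_p$, so by (i) the vector $\rho(Y(p))$ lies in $T_p\tilde{M}$. On the other hand $g_a$ vanishes identically on $\tilde{M}$, so its differential $\dd g_a|_p$ annihilates $T_p\tilde{M}$; hence $(\rho(Y)g_a)(p)=\dd g_a|_p\bigl(\rho(Y(p))\bigr)=0$. Combining this with the previous step yields $[X,Y](p)=0$, and since $p\in\tilde{M}$ was arbitrary we conclude $[X,Y]|_{\tilde{M}}=0$.

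I would close with a remark on the non-embedded case. If $\tilde{M}\hookrightarrow M$ is merely immersed via an inclusion $\iota$, one reads every restriction statement through $\iota$: the coefficients satisfy $g_a\circ\iota=0$ and one has $\rho(Y(p))\in\mathrm{im}(\dd\iota)$, so the final chain-rule computation $\dd g_a|_p(\dd\iota(v))=\dd(g_a\circ\iota)(v)=0$ carries over unchanged. No real obstacle arises beyond this bookkeeping; the essential content is simply that the anchor sends $\tilde{A}$ into directions along which the coefficients of $X$ are constant, namely equal to $0$.
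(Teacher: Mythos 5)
Your proof is correct and follows essentially the same route as the paper's: expand in a local frame, observe that the coefficients of $X$ vanish along $\tilde{M}$, and use hypothesis (i) to conclude that $\rho(Y)$, being tangent to $\tilde{M}$, annihilates those coefficients at points of $\tilde{M}$. The only cosmetic difference is that you expand only $X$ and invoke the Leibniz rule directly, whereas the paper expands both $X$ and $Y$ and writes the bracket via structure constants $c^{ij}_k$; the key cancellations are identical.
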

\begin{proof}
We can work locally and assume that $A\to M$ is trivial, with a
basis of sections $\{e_1,\dots,e_r\}$. Let
$$
[e_i,e_j] = \sum_{k}c^{ij}_k e_k, \qquad  c^{ij}_k \in C^\infty(M).
$$
Let $X,Y\in\Gamma(A)$ be such that $X|_{\tilde{M}}=0$ and
$Y|_{\tilde{M}}\in\Gamma(\tilde{A})$. We have to show that
$[X,Y](x)=0$ for all $x\in \tilde{M}$. We can write $X = \sum_i a_i
e_i$ and $Y = \sum_j b_j e_j$, with $a_i,b_j\in C^\infty(M)$.
%$$
%X = \sum_i a_i e_i \quad \text{ and } \quad Y = \sum_j b_j e_j
%\qquad a_i,b_j\in C^\infty(M).
%$$
Then their bracket is
$$
[X,Y]= \sum_k \left( \sum_{i,j} a_i b_j c^{i,j}_k + \rho(X) b_k -
\rho(Y) a_k \right) e_k.
$$
%$$[X,Y]= \sum_{i,j} f_ig_j[B_i,E_j] + \sum_j \rho_{X}(g_j) E_j - \sum_i \rho_{Y}(f_i)E_i$$
Given $x\in \tilde{M}$, $X(x)=0$ and, equivalently, $a_i(x)=0$ for
all $i$. The result now follows from $\rho(Y)$ being tangent to
$\tilde{M}$.
\end{proof}

It directly follows that a subbundle $\tilde{A}\to \tilde{M}$
satisfying $(i)$ and $(ii)$ above naturally inherits a Lie-algebroid
structure from $A\then M$, where the bracket is defined by locally
extending sections of $\tilde{A}$ to sections of $A$, using the
bracket on $A$, and then restricting to $\tilde{M}$; this operation
is well defined by Lemma~\ref{lemma:iii}. The structure on
$\tilde{A}\then \tilde{M}$ clearly makes the inclusion into a
Lie-algebroid map. Conversely, any Lie subalgebroid satisfies $(i)$
and $(ii)$, and its Lie-algebroid structure agrees with the one
induced from these properties. This notion of Lie subalgebroid
appears in \cite[Def.~4.3.14]{Mac-book} (where condition $(iii)$ is
required as an extra axiom).

A simple, but relevant, property of Lie subalgebroids, used
recurrently, is that a Lie-algebroid map $(B\then N) \to (A\then M)$
whose image lies in a Lie subalgebroid $\tilde{A}\then \tilde{M}$
gives rise to a Lie-algebroid map $(B\then N) \to (\tilde{A}\then
\tilde{M})$.

% good pairs and vector bundles
In order to study fibred products of Lie algebroids, we first
discuss vector bundles.

\begin{lemma}
\label{lemma:good.pairs.alg} Let $(F_i,f_i):(E_i\to M_i)\to(E\to
M)$, $i=1,2$, be two vector-bundle maps. The smooth maps $F_1,F_2$
form a good pair if and only if $f_1,f_2$ form a good pair and the
vector-bundle map
\begin{equation}\label{eq:E12map}
(F_1)\pi_1 - (F_2)\pi_2:E_1\times E_2|_{M_{12}}\to E
\end{equation}
has constant rank. (Here $\pi_i: E_1\times E_2\to E_i$ is the
projection, $i=1,2$.)
\end{lemma}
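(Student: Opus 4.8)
The plan is to study the set-theoretic fibred product $E_{12}:=E_1\times_E E_2$ via its projection to $M_{12}:=M_1\times_M M_2$. The starting observation is that $(e_1,e_2)\in E_{12}$ forces $(q_1(e_1),q_2(e_2))\in M_{12}$ (where $q_i:E_i\to M_i$ are the bundle projections), and that the fibre of $E_{12}\to M_{12}$ over $(x_1,x_2)$ is exactly the kernel of the restriction of \eqref{eq:E12map} to $(E_1\times E_2)|_{(x_1,x_2)}$. Writing $B$ for the bundle map \eqref{eq:E12map}, one has $E_{12}=\ker B$ over $M_{12}$ as a set. I would then fix local trivialisations $E_i\cong M_i\times V_i$, $E\cong M\times V$, in which $F_i(x_i,v_i)=(f_i(x_i),A_i(x_i)v_i)$ and $B_{(x_1,x_2)}(v_1,v_2)=A_1(x_1)v_1-A_2(x_2)v_2$.

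For the implication ($\Leftarrow$), suppose $f_1,f_2$ form a good pair and $B$ has constant rank. Then $M_{12}$ is embedded and $B$ is a constant-rank bundle map over it, so its kernel $E_{12}$ is an embedded vector subbundle of $(E_1\times E_2)|_{M_{12}}$, hence an embedded submanifold of $E_1\times E_2$; this is the first half of the good-pair condition. For the exactness of \eqref{eq:exact} I must show $T_{(e_1,e_2)}E_{12}=\ker(\dd F_1-\dd F_2)$. The inclusion $\subseteq$ is immediate since $F_1=F_2$ on $E_{12}$, so equality follows from a dimension count once $\dim\ker(\dd F_1-\dd F_2)=\dim E_{12}$. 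Computing in the trivialisation, an element of $\ker(\dd F_1-\dd F_2)$ is a pair $((\dot x_1,\dot v_1),(\dot x_2,\dot v_2))$ with $(\dot x_1,\dot x_2)\in\ker(\dd f_1-\dd f_2)=T M_{12}$ and $B(\dot v_1,\dot v_2)=w(\dot x_1,\dot x_2)$, where $w$ collects the derivatives of the $A_i$; thus $\ker(\dd F_1-\dd F_2)$ fibres over $TM_{12}$ with affine fibres modelled on $\ker B$.

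The crux, and the step I expect to be the main obstacle, is to show that for each $(\dot x_1,\dot x_2)\in TM_{12}$ the equation $B(\dot v_1,\dot v_2)=w(\dot x_1,\dot x_2)$ is solvable, i.e. $w(\dot x_1,\dot x_2)\in\image B$; this makes the projection to $TM_{12}$ surjective and forces $\dim\ker(\dd F_1-\dd F_2)=\dim M_{12}+\mathrm{rk}(\ker B)=\dim E_{12}$. Here I would use that $(v_1,v_2)\in\ker B_{(x_1,x_2)}$ together with the fact that, $B$ having constant rank, $\ker B$ is a subbundle: extending $(v_1,v_2)$ to a local section $s$ of $\ker B$ over $M_{12}$ and differentiating the identity $B(s)\equiv 0$ in the direction $(\dot x_1,\dot x_2)$ gives $w(\dot x_1,\dot x_2)=B(\dd s(\dot x_1,\dot x_2))\in\image B$. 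This completes ($\Leftarrow$).

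For the converse ($\Rightarrow$), assume $F_1,F_2$ form a good pair. Since each $F_i$ is fibrewise linear it commutes with the homothety actions, so $E_{12}$ is invariant under the product action $h$ on $E_1\times E_2$; being an embedded invariant submanifold, it is a vector subbundle (by Theorem~\ref{thm:vertical.lift} and the subsequent identification of subbundles with invariant submanifolds), and the constancy of its rank translates into constant rank of $B=$\eqref{eq:E12map}. Its base $h_0(E_{12})$ is the zero section over $M_{12}$, which exhibits $M_{12}$ as an embedded submanifold inside the zero section $M_1\times M_2$. Finally, to recover exactness of \eqref{eq:exact} for $f_1,f_2$, I would evaluate the good-pair tangent condition for $F_1,F_2$ at a zero-section point $(0_{x_1},0_{x_2})$: under the canonical splitting $T_{0_{x_i}}E_i\cong T_{x_i}M_i\oplus(E_i)_{x_i}$, the map $\dd F_1-\dd F_2$ decomposes as $(\dd f_1-\dd f_2)\oplus B_{(x_1,x_2)}$, so the identity $T_{(0,0)}E_{12}=\ker(\dd F_1-\dd F_2)$ splits as $T_{(x_1,x_2)}M_{12}\oplus\ker B=\ker(\dd f_1-\dd f_2)\oplus\ker B$, yielding $T_{(x_1,x_2)}M_{12}=\ker(\dd f_1-\dd f_2)$, as required.
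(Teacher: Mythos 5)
Your proof is correct, and it shares the paper's skeleton --- both directions hinge on identifying $E_{12}=E_1\times_E E_2$ with the kernel of \eqref{eq:E12map} over $M_{12}$ and exploiting its vector-subbundle structure --- but you execute the two key verifications by genuinely different means. For ($\Leftarrow$), the paper stays coordinate-free: it assembles the fibres, the tangent spaces of the total spaces, and the tangent spaces of the bases into a short exact sequence of three-term complexes (the columns being the canonical sequences $0\to E_x\to T_vE\to T_xM\to 0$) and deduces exactness of the middle row \eqref{eq:exact} from exactness of the top row (the constant-rank hypothesis) and of the bottom row (goodness of $f_1,f_2$) by a nine-lemma-style chase. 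Your dimension count is the coordinate incarnation of that chase; in particular, the step you flag as the crux --- solvability of $B(\dot v_1,\dot v_2)=w(\dot x_1,\dot x_2)$, obtained by differentiating a local section of $\ker B$ --- is precisely the surjectivity of $T_{(v_1,v_2)}E_{12}\to T_{(x_1,x_2)}M_{12}$ that the paper gets for free from the exactness of its first column once $E_{12}\to M_{12}$ is known to be a subbundle. For ($\Rightarrow$), the paper invokes the idempotent argument of Lemma~\ref{lemma:good.pairs.groupoids} to get goodness of $(f_1,f_2)$ and then asserts rather tersely that the kernel of \eqref{eq:E12map} ``must have constant rank'' because it is a manifold; your route --- $E_{12}$ is an embedded homothety-invariant submanifold, hence a vector subbundle by Theorem~\ref{thm:vertical.lift}, which yields the constant rank, after which the splitting $T_{0_{x_i}}E_i\cong T_{x_i}M_i\oplus (E_i)_{x_i}$ at zero-section points recovers goodness of $(f_1,f_2)$ --- makes explicit exactly the point the paper leaves implicit, and your zero-section splitting is the vector-bundle specialization of the ``direct summand'' argument inside the groupoid lemma. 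In short: the paper's argument buys brevity and coordinate-independence, while yours buys a self-contained elementary verification that, pleasantly, leans on the paper's own $\Rm$-action characterization of subbundles and gives a cleaner justification of the constant-rank claim.
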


\begin{proof}
Assuming that $(F_1,F_2)$ is good, the same arguments used in
Lemma~\ref{lemma:good.pairs.groupoids} show that $(f_1,f_2)$ is also
good, hence $M_{12}\subset M$ is embedded with the expected tangent
space. Since the kernel of the map \eqref{eq:E12map} is the manifold
$E_{12} = E_1\times_M E_2$, it must have constant rank.

Conversely, if $(f_1,f_2)$ is a good pair and the rank of the map
\eqref{eq:E12map} is constant, then $M_{12}\subset M_1\times M_2$ is
embedded with the expect tangent space, $E_{12}$ (which is the
kernel of the map) is also an embedded submanifold, and a vector
subbundle. It only remains to show that it has the expected tangent
space.

%We can identify the kernel of the differential of the projections
%with the fibers of the vector bundles

For any vector bundle $q: E\to M$, we can identify a fiber
$E_{q(v)}$ with $\ker(\dd q)_v \subset T_vE$, and in this way
 we obtain a short exact sequence of complexes,
$$\xymatrix@R=10pt{
0 \ar[r] \ar[d] & (E_{12})_{(x_1,x_2)} \ar[r] \ar[d] & (E_1)_{x_1} \times (E_2)_{x_2} \ar[r] \ar[d] & E_x \ar[d]\\
0 \ar[r] \ar[d] & T_{(v_1,v_2)}E_{12}  \ar[r] \ar[d] & T_{v_1}E_1 \times T_{v_2}E_2 \ar[r] \ar[d]& T_vE \ar[d]\\
0 \ar[r] & T_{(x_1,x_2)}M_{12} \ar[r]& T_{x_1}M_1 \times T_{x_2}M_2
\ar[r]& T_xM. }
$$
The top sequence is exact by assumption, the bottom one is exact
because $f_1,f_2$ is a good pair, so the middle one is also exact,
proving the result.
\end{proof}

When the conditions in the  previous lemma hold, the vector bundle
$E_{12}\to M_{12}$ satisfies the universal property of the fibred
product in the category of vector bundles.

We now move to Lie algebroids.

% fibred products of algebroids
\begin{proposition}
\label{prop:fp.alg} Let $(F_i,f_i):(A_i\then M_i)\to(A\then M)$,
$i=1,2$, be two Lie-algebroid maps so that the pair $(F_1,F_2)$ is
good. Then the vector-bundle fibred product
$$
(A_{12}\to M_{12})\subset(A_1\times A_2\then M_1\times M_2)
$$
is an embedded Lie subalgebroid of the product Lie algebroid.
Moreover, the Lie algebroid $(A_{12}\then M_{12})$ satisfies the
universal property of the fibred product in the category of Lie
algebroids.
\end{proposition}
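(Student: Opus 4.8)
The plan is to apply the subalgebroid criterion encoded in conditions $(i)$ and $(ii)$ (see the discussion around Lemma~\ref{lem:iii}) to the embedded vector subbundle $A_{12}\to M_{12}$ of the product Lie algebroid $A_1\times A_2\then M_1\times M_2$. First I would invoke Lemma~\ref{lemma:good.pairs.alg}: since $(F_1,F_2)$ is good, the pair $(f_1,f_2)$ is good as well and the map \eqref{eq:E12map} has constant rank, so the vector-bundle fibred product $A_{12}=A_1\times_A A_2$ is an embedded vector subbundle of $A_1\times A_2$, with base the good fibred product $M_{12}$. Throughout I would use the pointwise description $A_{12}=\{v\in(A_1\times A_2)|_{M_{12}}:G_1(v)=G_2(v)\}$, where $G_i=F_i\circ\pr_i$ is the composite of the product projection (a morphism) with the morphism $F_i$, hence itself a Lie-algebroid morphism covering $g_i=f_i\circ\pr_i$ on the base.

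To verify condition $(i)$, take $(a_1,a_2)\in A_{12}$ over $(x_1,x_2)\in M_{12}$. The product anchor sends it to $(\rho_1(a_1),\rho_2(a_2))$, and anchor-compatibility of the $F_i$ gives $\dd f_1(\rho_1(a_1))=\rho_A(F_1 a_1)=\rho_A(F_2 a_2)=\dd f_2(\rho_2(a_2))$. Since $M_{12}$ is a good fibred product, its tangent space is exactly $\ker(\dd f_1-\dd f_2)$, whence $(\rho_1(a_1),\rho_2(a_2))\in T_{(x_1,x_2)}M_{12}$. This is condition $(i)$.

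Condition $(ii)$ is the heart of the matter, and I expect it to be the main obstacle. Given $X,Y\in\Gamma(A_1\times A_2)$ whose restrictions lie in $\Gamma(A_{12})$, I must show $G_1[X,Y]=G_2[X,Y]$ along $M_{12}$. Working locally in a frame $\{V_k\}$ of $A$, I would expand $G_i\circ X=\sum_k u_k^{(i)}(V_k\circ g_i)$ and $G_i\circ Y=\sum_l w_l^{(i)}(W_l\circ g_i)$ and apply the standard formula for the action of a Lie-algebroid morphism on a bracket to each $G_i$. On $M_{12}$ one has $g_1=g_2$ together with $G_1X=G_2X$ and $G_1Y=G_2Y$, so the coefficients agree there, $u_k^{(1)}|_{M_{12}}=u_k^{(2)}|_{M_{12}}$ and likewise for $w_l$; hence the purely algebraic terms of the two formulas coincide on $M_{12}$. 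The delicate part is the anchor-derivative terms $\rho(X)(w_l^{(i)})$ and $\rho(Y)(u_k^{(i)})$, where $\rho$ is the product anchor: a priori these could detect the behaviour of the coefficients transverse to $M_{12}$. Here condition $(i)$ is decisive—since $X|_{M_{12}},Y|_{M_{12}}\in\Gamma(A_{12})$, their anchors are tangent to $M_{12}$, so the derivatives only see the common restrictions of the coefficients and therefore also agree on $M_{12}$. This yields $[X,Y]|_{M_{12}}\in\Gamma(A_{12})$, with Lemma~\ref{lem:iii} guaranteeing independence from the chosen extensions; by the criterion, $A_{12}\then M_{12}$ is an embedded Lie subalgebroid.

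Finally, for the universal property, the restricted projections $\pr_i:A_{12}\to A_i$ are Lie-algebroid morphisms, being composites of the subalgebroid inclusion with the product projections. Given a Lie algebroid $B\then N$ with morphisms $\Phi_i:B\to A_i$ satisfying $F_1\Phi_1=F_2\Phi_2$, the pair $(\Phi_1,\Phi_2):B\to A_1\times A_2$ is a morphism by the universal property of the product, its image lies in $A_{12}$ by the pointwise description, and hence—using that a Lie-algebroid map landing in a Lie subalgebroid factors through it—it descends to a morphism $B\to A_{12}$ compatible with the projections; uniqueness is immediate, since a map into $A_{12}$ is determined by its two projections into $A_1$ and $A_2$.
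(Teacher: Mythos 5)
Your proof is correct, and its overall skeleton matches the paper's: both establish that $A_{12}\to M_{12}$ is an embedded vector subbundle via Lemma~\ref{lemma:good.pairs.alg}, verify the subalgebroid conditions $(i)$ and $(ii)$, and obtain the universal property from the factorization of Lie-algebroid maps through Lie subalgebroids. The difference lies in how condition $(ii)$ is executed. You keep the two morphisms $G_i=F_i\circ\pr_i$ separate, expand $G_i\circ X$ and $G_i\circ Y$ in a local frame of $A$ (so that the coefficients are unique and hence agree on $M_{12}$), and compare the two bracket formulas term by term; the anchor-derivative terms are then handled by the tangency of $\rho(X),\rho(Y)$ to $M_{12}$, so your proof of $(ii)$ genuinely uses condition $(i)$. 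The paper instead packages everything into the single product morphism $(F_1\times F_2,f_1\times f_2)$ into $A\times A\then M\times M$ and chooses the expansion sections $X_i,Y_j\in\Gamma(A\times A)$ so that they preserve the diagonal subalgebroid $\Delta_A\to\Delta_M$; with that choice every term of the morphism bracket formula --- including the anchor-derivative terms, which become mere scalar coefficients multiplying values in $\Delta_A$ --- automatically lies in $\Delta_A$ along $M_{12}$, so no tangency or frame-uniqueness argument is needed and $(i)$ plays no role in proving $(ii)$. What your route buys is explicitness: it avoids the (slightly non-obvious) claim that diagonal-preserving expansion sections exist, at the price of the careful transverse-derivative argument, which you correctly identify as the delicate point and resolve properly. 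Both arguments are complete.
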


\begin{proof}
We have to show that $A_{12}\to M_{12}$ satisfies conditions $(i)$
and $(ii)$.

Regarding $(i)$, since $(F_1,f_1)$ and $(F_2,f_2)$ are Lie-algebroid
maps, we have
$$
(\dd f_1) \rho_1=\rho F_1 \quad\text{ and }\quad (\dd
f_2)\rho_2=\rho F_2,
$$
which implies that $(\rho_1,\rho_2)(A_1\times_A A_2)\subset
TM_1\times_{TM}TM_2=T M_{12}$, the last equality following from
$f_1,f_2$ being a good pair.

Regarding $(ii)$,  given $X,Y\in\Gamma(A_1\times A_2)$ such that
$X|_{M_{12}},Y|_{M_{12}}\in\Gamma(A_{12})$, we have to show that the
same holds for $[X,Y]$. Consider the product map
$$
(F,f)=(F_1\times F_2,f_1\times f_2):(A_1\times A_2\then M_1\times
M_2) \to (A\times A\then M\times M).
$$
We can always write $FX=\sum_i a_i (X_i f)$ for some functions
$a_i\in C^\infty(M_1\times M_2)$ and sections $X_i\in\Gamma(A\times
A)$, as an identity between sections of the pullback bundle.
Consider the diagonal subbundle $\Delta_A\to \Delta_M$ of $A\times
A\to M\times M$. Since $X|_{M_{12}}\in\Gamma(A_{12})$, we see that
$FX(M_{12})\subset \Delta_A$; also, we can choose the sections $X_i$
such that $X_i(\Delta_M)\subset\Delta_A$. We can proceed analogously
for $Y$.

Since $F$ is an algebroid map, we have the equation
$$
F[X,Y] = \sum_{i,j} a_ib_j([X_i,Y_j]f) + (\rho(X) b_j)(Y_jf) -
(\rho(Y)a_i)(X_if),
$$
and by using that $\Delta_A\to \Delta_M$ is a Lie subalgebroid of
$A\times A\to M\times M$, we conclude that
$F[X,Y](M_{12})\subset\Delta_A$, and hence the result.
\end{proof}

A brief discussion on fibred products of Lie algebroids can be found
in \cite[pp.207]{HiMa}, and further details (in the case of
transverse maps) in \cite[Prop.~2.3]{luca}.

%. \comment{the previous result is basically the proof of the claim there?}

%%%%%%%%%%%%%%%%%%%%%%%%

\subsection{Fibred products and the Lie functor}

%Given a diagram $i\mapsto X_i$ and a functor $X\mapsto F(X)$, there
%is always a canonical map $F(\varprojlim X_i)\to \varprojlim F(X_i)$
%arising from the universal property, and $F$ is said to {\em
%preserve} the limit $\varprojlim X_i$ if it is an isomorphism.

When passing from Lie groupoids to Lie algebroids via the Lie
functor, recall the notations $A_G=\Lie(G)$ and $F'=\Lie(F)$ for
maps.
For a Lie groupoid $G\toto M$, there is a natural splitting
\begin{equation}\label{eq:split}
TG |_{u(M)}=A_G \oplus TM,
\end{equation}
so that, for a groupoid map $(F,f):G\to \tilde{G}$, we obtain a
decomposition
\begin{equation}\label{eq:splitmap}
\dd F = (F',\dd f): (A_G)_x \oplus TM_x \to (A_{\tilde{G}})_{f(x)}
\oplus T_{f(x)}\tilde{M}.
\end{equation}

\begin{proposition}\label{prop:diff.fp}
Let $(F_i,f_i):(G_i\toto M_i)\to(G\toto M)$, $i=1,2$, be
Lie-groupoid maps such that $F_1$ and $F_2$ form a good pair. Then
the induced pair $F_i':A_{G_i}\to A_G$, $i=1,2$, is also good, and
the canonical map (arising from the universal property of fibred
products) $ A_{G_1\times_G G_2}\to A_{G_1} \times_{A_G} A_{G_2},$ is
an isomorphism, which, upon the obvious identification $T(G_1\times
G_2)=TG_1\times TG_2$, is just the identity.
\end{proposition}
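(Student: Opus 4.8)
The plan is to realize both sides of the claimed isomorphism as the \emph{same} embedded subbundle of the product algebroid $A_{G_1}\times A_{G_2}$, and to deduce the good-pair property and the identification simultaneously from a single fibrewise computation. Write $G_{12}=G_1\times_G G_2$ and $M_{12}=M_1\times_M M_2$. First I would recall that, since $(F_1,F_2)$ is a good pair, Lemma~\ref{lemma:good.pairs.groupoids} gives that $(f_1,f_2)$ is a good pair, and Prop.~\ref{prop:fp.gpd} gives that $G_{12}\toto M_{12}$ is an embedded Lie subgroupoid of $G_1\times G_2\toto M_1\times M_2$. Taking Lie algebroids and using that the Lie functor preserves products, $A_{G_{12}}$ is a Lie subalgebroid of $A_{G_1}\times A_{G_2}$.

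The core computation identifies this subalgebroid explicitly. Fix $x=(x_1,x_2)\in M_{12}$ with $y=f_1(x_1)=f_2(x_2)$. Because $(F_1,F_2)$ is good, $T_{u(x)}G_{12}$ is the set of pairs $(v_1,v_2)\in T_{u(x_1)}G_1\times T_{u(x_2)}G_2$ with $\dd F_1(v_1)=\dd F_2(v_2)$. Imposing in addition $\dd\sour_{12}(v_1,v_2)=0$, i.e.\ $\dd\sour_1(v_1)=0$ and $\dd\sour_2(v_2)=0$, forces $v_i\in A_{G_i}$. For such vectors the groupoid-map identity $\sour\circ F_i=f_i\circ\sour_i$ shows $\dd F_i(v_i)\in\Ker(\dd\sour)=A_G$, and by definition $\dd F_i(v_i)=F_i'(v_i)$ (this is exactly \eqref{eq:splitmap} applied on the algebroid summand of the splitting \eqref{eq:split}). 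Hence the fiber of $A_{G_{12}}$ over $x$ is $\{(v_1,v_2)\in (A_{G_1})_{x_1}\times(A_{G_2})_{x_2}: F_1'(v_1)=F_2'(v_2)\}$, which is precisely the set-theoretic fiber of $A_{G_1}\times_{A_G}A_{G_2}$.

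From this computation everything follows. The set-theoretic fibred product of $F_1',F_2'$ is exactly $A_{G_{12}}$, which is the total space of the Lie algebroid of $G_{12}$ and hence a vector subbundle of $A_{G_1}\times A_{G_2}$ of constant rank; since the kernel of $(F_1')\pi_1-(F_2')\pi_2$ over $M_{12}$ is this subbundle, that difference map has constant rank, and Lemma~\ref{lemma:good.pairs.alg} (using that $(f_1,f_2)$ is already good) concludes that $(F_1',F_2')$ is a good pair. Then Prop.~\ref{prop:fp.alg} produces the algebroid fibred product $A_{G_1}\times_{A_G}A_{G_2}$ as an embedded Lie subalgebroid of $A_{G_1}\times A_{G_2}$. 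Since the induced subalgebroid structure on an embedded subbundle is determined by restriction from the ambient algebroid (see the discussion following Lemma~\ref{lemma:iii}), and both $A_{G_{12}}$ and $A_{G_1}\times_{A_G}A_{G_2}$ coincide as subbundles of $A_{G_1}\times A_{G_2}$, they coincide as Lie subalgebroids; under the identification $T(G_1\times G_2)=TG_1\times TG_2$ the canonical comparison map is literally the identity inclusion, giving the claimed isomorphism.

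I expect the main obstacle to be the fibrewise tangent-space computation: one must carefully combine the good tangent-space property of $G_{12}$ with the groupoid-map relation $\sour\circ F_i=f_i\circ\sour_i$ to see that the source-kernel condition on $G_{12}$ cuts out exactly the algebroid fibred product, and then verify that the two a priori different Lie algebroid structures---one from integrating $G_{12}$, one from Prop.~\ref{prop:fp.alg}---agree by appealing to uniqueness of induced subalgebroid structures.
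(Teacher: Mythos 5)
Your proof is correct and follows essentially the same route as the paper's: both reduce the good-pair claim to Lemma~\ref{lemma:good.pairs.alg} by showing that the kernel of $F_1'\pi_1 - F_2'\pi_2$ over $M_{12}$ is fibrewise exactly $A_{G_{12}}$ (hence of locally constant rank), and then invoke Prop.~\ref{prop:fp.alg} to identify the two fibred products. The only cosmetic difference is that you compute the kernel directly by intersecting $T_{u(x)}G_{12}$ with $\ker(\dd\sour_1)\times\ker(\dd\sour_2)$, whereas the paper packages the same linear algebra via the splitting \eqref{eq:split} and the block decomposition \eqref{eq:splitmap}; your explicit appeal to uniqueness of induced subalgebroid structures in the final identification is a welcome elaboration of the paper's ``by construction.''
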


\begin{proof}

Denote by $G_{12}$ the fibred-product Lie groupoid, which exists by
Prop.~\ref{prop:fp.gpd}, and let $A_{12}=\Lie(G_{12})$. According to
Lemma~\ref{lemma:good.pairs.alg}, to see that $F_1'$, $F_2'$ is a
good pair we need to show that $f_1$, $f_2$ form a good pair and
that the vector-bundle map
\begin{equation}\label{eq:Fp}
F_1' \pi_1 - F_2'\pi_2:A_{G_1}\times A_{G_2}|_{M_{12}}\to A
\end{equation}
has constant rank. The first condition follows from $F_1$, $F_2$
forming a good pair, see Lemma~\ref{lemma:good.pairs.groupoids}.
Given $(x_1,x_2)\in M_{12}$, since $F_1$, $F_2$ form a good pair,
and using  the splitting \eqref{eq:split}, we have the following
exact sequence (cf. \eqref{eq:exact}):
$$
0 \to (A_{12} \oplus T M_{12})_{(x_1,x_2)} \to (A_{G_1}\times
A_{G_2})_{(x_1,x_2)}\oplus(T M_1 \times T M_2)_{(x_1,x_2)} \to
(A_G\oplus TM)_x.
$$
Using \eqref{eq:splitmap}, we see that the kernel of $(F_1'\pi_1 -
F_2'\pi_2)_{(x_1,x_2)}$ is exactly $(A_{12})_{(x_1,x_2)}$, so the
map \eqref{eq:Fp} has constant rank.

Knowing that $(F_1',F_2')$ is a good pair, we conclude that the
fibred-product Lie algebroid is well-defined (by
Prop.~\ref{prop:fp.alg}), and by construction it agrees with
$A_{12}$.
\end{proof}

%%%%%%%%%%%%%%%%%%%%%%%%%%%%%%%%%%%%%%%%%%%%%%%%%%%%%%%%%%%%%%%%%%%%%%%%%%%%

\end{document}